\newtheorem{thm}{Theorem}[section]
\newtheorem{defn}[thm]{Definition}
\newtheorem{lem}[thm]{Lemma}
\newtheorem{prop}[thm]{Proposition}
\newtheorem{cor}[thm]{Corollary}
\numberwithin{equation}{thm}
\theoremstyle{definition}
\newtheorem{rmk}[thm]{Remark}
\newtheorem{exam}[thm]{Example}
\newcommand{\opname}[1]{\operatorname{\mathsf{#1}}}
    \newcommand{\Ext}{\opname{Ext}}
\renewcommand{\mod}{\opname{mod}}
\newcommand{\Hom}{\opname{Hom}}
  \newcommand{\End}{\opname{End}}
  \newcommand{\soc}{\opname{soc}}
\newcommand{\Rep}{\opname{Rep}}
\newcommand{\diag}{\opname{diag}}
\newcommand{\row}{\opname{row}}   \newcommand{\col}{\opname{col}}
\newcommand{\dg}{\opname{dg}}
\newcommand{\az}{{\alpha}}
  \newcommand{\ttz}{{\Theta}}
\newcommand{\dz}{{\delta}}  \newcommand{\ddz}{{\Delta}}
\newcommand{\gz}{{\gamma}}  
\newcommand{\sz}{{\sigma}}  \newcommand{\ssz}{{\Sigma}}
\newcommand{\lz}{{\lambda}} \newcommand{\llz}{{\Lambda}}
\newcommand{\vz}{{\varphi}}
\newcommand{\ch}{{\mathcal H}}
\newcommand{\ci}{{\mathcal I}}
\newcommand{\cm}{{\mathcal M}}
\newcommand{\cs}{{\mathcal S}}
\newcommand{\cz}{{\mathcal Z}}
\newcommand{\fkc}{{\frak c}} \newcommand{\fkC}{{\frak C}}
                             \newcommand{\fkD}{{\frak D}}
\newcommand{\fkg}{{\frak g}}
\newcommand{\fkh}{{\frak h}} \newcommand{\fkH}{{\frak H}}
\newcommand{\fkl}{{\frak l}}
\newcommand{\fks}{{\frak s}} \newcommand{\fkS}{{\frak S}}
\newcommand{\scc}{{\mathscr C}}
\newcommand{\scd}{{\mathscr D}}
\newcommand{\sce}{{\mathscr E}}
\newcommand{\scm}{{\mathscr M}}
\newcommand{\bbd}{{\mathbb D}}
\newcommand{\bbn}{{\mathbb N}}
\newcommand{\bbq}{{\mathbb Q}}
\newcommand{\bbz}{{\mathbb Z}}
       \newcommand{\ol}[1]{\overline{#1}}
\newcommand{\ra}{\rightarrow}             
\newcommand{\lan}{{\langle}}              \newcommand{\ran}{{\rangle}}
\newcommand{\geqs}{{\geqslant}}           \newcommand{\leqs}{\leqslant}
\newcommand{\lra}{{\longrightarrow}}
\newcommand{\iso}{\stackrel{_\sim}{\rightarrow}}
\newcommand{\otm}{\otimes}
\newcommand{\bps}{\bigoplus}
\newcommand{\wit}{\widetilde}  \newcommand{\wih}{\widehat}
\newcommand{\llra}{~{\Longleftrightarrow}~}
\newcommand{\ie}{{\em i.e.}~}
\newcommand{\vartri}{\vartriangle}
\newcommand{\U}{{\bf U}}
\newcommand{\bdim}{{\bf dim}}
\newenvironment{psmallmatrix}{\left(\begin{smallmatrix}}{\end{smallmatrix}\right)}
\begin{document}


\title[Ringel-Hall algebras of cyclic quivers]%
{Remarks on PBW bases of Ringel-Hall algebras of cyclic quivers}

\author{Zhonghua Zhao}

\address{Department of Mathematics, Beijing University of Chemical Technology, Beijing 100029, China.}
\email{zhaozh@mail.buct.edu.cn}

\keywords{Ringel-Hall algebras, cyclic quivers, PBW bases, canonical bases, affine quantum Schur algebras}

\date{\today}

\subjclass[2010]{17B37,16G20.}

\thanks{The paper was written while the author was visiting the University of New South, the hospitality and support of
UNSW are gratefully acknowledged. He also would like to thank Prof. J. Du for posing the questions and many helpful discussion,
and the China Scholarship Council for the financial support.}

\maketitle

\begin{abstract}
In this paper, we give a recursive formula for the interesting PBW basis $E_{A}$ of composition subalgebras
of Ringel-Hall algebras $\fkH_\vartri(n)$ of cyclic quivers after \cite{DengDuXiao2007generic}, and another
construction of canonical bases of $\U_v^+(\wih{\fks\fkl}_n)$ from the monomial bases $m^{(A)}$
follow \cite{DuZhaomultiplication}. As an application, we will determined all the canonical basis of
$\U_v^+(\wih{\fks\fkl}_2)$ associated with modules of Lowery length $\leqs3$.
Finally, we will discuss the relation of canonical bases of Ringel-Hall algebras and those of affine quantum Schur algebras.
\end{abstract}

\setcounter{tocdepth}{1}
\tableofcontents

\section{Introduction}
Following the remarkable realization \cite{Ringel1990quantum,Ringel1993revisited} of the $\pm$-part of quantum enveloping algebras of finite
type in terms of Hall algebras, Ringel \cite{Ringel1993composition} introduced the generic Hall algebra $\fkH_\vartri(n)$ associated with a
cyclic quiver $\ddz(n)(n\geqs 2)$ and show that its composition subalgebra $\fkC_\vartri(n)$ is isomorphic to the $\pm$-part of the
quantum enveloping algebra $\U_v(\wih{\fks\fkl}_n)$. By Drinfeld's double, the whole quantum enveloping algebra was realized by double Ringel-Hall algebras $\fkD_\vartri(n)$.

Based on the work of Ringel-Hall algebras, another landmark theory was Lusztig's introduction \cite{Lusztig1990canonical} of the canonical basis of the quantum enveloping algebra of a simple complex Lie algebra, and extended to the general cases \cite{Lusztig1991quivers,Lusztig1992affine}. Meanwhile, Kashiwara \cite{Kashiwara1991crystal} introduced the crystal basis theory for arbitrary Kac-Moody algebras and obtained the global crystal basis, which was shown those two bases are coincident \cite{GrojnowskiLusztig1993comparison}. For affine type A, Deng, Du and Xiao \cite{DengDuXiao2007generic} constructed monomial bases and canonical bases of Ringel-Hall algebras and its composition subalgebras algebraically. Moreover, Beck etc. \cite{BeckChariPressley1999algebraic,BeckNakajima2004crystal} gave another algebraic construction of PBW bases and canonical bases in the affine case, but when restricted to affine type A, the relation to the PBW bases in \cite{DengDuXiao2007generic} via quiver representations remains unclear.

Even in the finite case of lower rank, computing canonical bases is in general very difficult, there are only some results of finite type (see, e.g., \cite[\S3]{Lusztig1990canonical} for types $A_1$ and $A_2$ and \cite{XicanonicalA31999, XicanonicalB21999} for type $A_3,B_2$). Partial results about canonical bases of affine $A_1$ were showed by Lusztig in \cite[Sec. 12]{Lusztig1993tight} or \cite[14.5.5]{Lusztig1993introduction}. Recently, Du and the author \cite{DuZhaomultiplication} used the multiplication formulas in \cite{DuFu2015quantum} to compute all the canonical bases associated with modules of Lowey length at most 2 for quantum affine $\mathfrak{gl}_2$.

Schur algebras or their quantum analogue, $q$-Schur algebras, are a class of finite dimensional algebras which play an important role in the theory of Schur-Weyl
duality. The affine analogy of $q$-Schur algebras was given by Ginzberg-Vasserot \cite{GinzburgVasserot1993langlands}. Later, several other version have appeared
in \cite{Lusztig1999aperiodicity,Green1999affine}. In the affine $A$ case, the surjective map $\zeta_r$ from double Ringel-Hall algebras $\fkD_\vartri(n)$ to affine $q$-Schur algebras $\cs_\vartri(n,r)$ was established in \cite{VaragnoloVasserot1999decomposition}, for details, see also \cite{DengDuFu2012double}. The canonical bases of affine $q$-Schur algebras were studied by Lusztig \cite{Lusztig1999aperiodicity} via geometric method. Recently, Du and Fu \cite{DuFu2014integral} give an algebraic construction of those canonical bases.

The main motivation for this paper results from analysing Deng, Du and Xiao's realization of the PBW bases
and canonical bases for the positive part $\U^+$ of quantum affine $\fks\fkl_n$ via generic extension of
Ringel-Hall algebras for cyclic quivers. In \cite{DengDuXiao2007generic}, the authors constructed the PBW basis $E_A$ for the
Lusztig $\cz(=\bbz[v,v^{-1}])$-form $U_\cz^+$ from the strong monomial basis property established in \cite{DengDu2005monomial}, and investigated the
triangular relations of the bar involution on these basis elements. Through a standard linear algebra method, they obtained the canonical basis of quantum affine $\fks\fkl_n$, which agreed with Lusztig's geometric construction of canonical basis in \cite{Lusztig1992affine}. The remarkable property of those PBW basis is following, for $A$ aperiodic, $E_A$ was combined by $\wit{u}_A$ and some $\eta^{C}_A$-linear combination of $\wit{u}_C$ with $C$ periodic and $\eta^{C}_A\in v^{-1}\bbz[v^{-1}]$. In this paper, we first give a
recursive formula of those $\eta^C_A$. Then, we review the construction of the canonical basis of $\fkH_\vartri(n)$ following \cite{DuZhaomultiplication} and give another construction of canonical basis of $\U_v^+(\wih{\fks\fkl}_n)$ from the monomial basis $m^{(A)}$. As an application, we will determine all the canonical basis of $\U_v^+(\wih{\fks\fkl}_2)$ associated with modules of Lowery length $\leqs3$. Finally, we discuss the relation between the canonical bases of Ringel-Hall algebras $\fkH_\vartri(n)$ to those of affine $q$-Schur algebras $\cs_\vartri(n,r)$ through the epimorphism $\zeta_r$. In the appendix, we talk about the tightness of monomials with 3 terms and 4 terms in $\U^+_v(\wih{\fks\fkl}_2)$.

\subsection{Notation}
For a positive integer $n$, let $M_{\vartri,n}(\bbz)$ be the set of all $\bbz\times\bbz$ matrices $A=(a_{i,j})_{i,j\in\bbz}$ with $a_{i,j}\in\bbz$
such that
\begin{itemize}
  \item [(1)] $a_{i,j}=a_{i+n,j+n}$ for $i,j\in\bbz$, and
  \item [(2)] for every $i\in\bbz$, both the set $\{j\in\bbz\mid a_{i,j}\neq0\}$ and $\{j\in\bbz\mid a_{j,i}\neq 0\}$ are finite.
\end{itemize}

Let $\ttz_\vartri(n)=M_{\vartri,n}(\bbn)$ be the subset of $M_{\vartri,n}(\bbz)$ consisting of matrices with entries from $\bbn$,
\begin{equation*}
  \ttz^+_\vartriangle(n)=\{A\in \ttz_\vartriangle(n)\mid a_{ij}=0~\text{for}~i\geqs j\}~\text{and}~\ttz^-_\vartriangle(n)=\{A\in \ttz_\vartriangle(n)\mid a_{ij}=0~\text{for}~i\leqs j\}.
\end{equation*}

For $A\in\ttz_\vartri(n)$, write
\begin{equation*}
  A=A^++A^0+A^-,
\end{equation*}
where $A^0$ is the diagonal submatrix of $A$, $A^+\in\ttz_\vartri^+(n),$ and $A^-\in\ttz_\vartri^-(n)$.

The {\it core} of  a matrix $A$ in $\Theta_\vartri^{+}(n)$ is the $n\times l$ submatrix of $A$ consisting of rows from 1 to $n$ and columns from 1 to $l$, where $l$ is the column index of the right most non-zero entry in the given $n$ rows.

Set $\bbz_{\vartri}^n=\{(\lz_i)_{i\in\bbz}\mid \lz_i\in\bbz,\lz_i=\lz_{i-n}~\text{for}~i\in\bbz\}$ and
$\bbn_{\vartri}^n=\{(\lz_i)_{i\in\bbz}\in\bbz_{\vartri}^n\mid \lz_i\geqs 0~\text{for}~i\in\bbz\}$.
For each $A\in M_{\vartri,n}(\bbz)$,
let
\begin{equation*}
  \row(A)=(\Sigma_{j\in\bbz}a_{i,j})_{i\in\bbz}\in \bbz_{\vartri}^n,\quad \col(A)=(\Sigma_{i\in\bbz}a_{i,j})_{j\in\bbz}\in \bbz_{\vartri}^n.
\end{equation*}

Define an order relation $\leqs$ on $\bbn_\vartri^n$ by $\lz\leqs \mu\llra \lz_i\leqs \mu_i(1\leqs i\leqs n).$
We say $\lz<\mu$ if $\lz\leqs\mu$ and $\lz\neq\mu$.

Let $\bbq(v)$ be the fraction field of $\cz=\bbz[v,v^{-1}]$. For integers $N,t$ with $t\geqs 0$
and $\mu\in\bbz_{\vartri}^n$ and $\lz\in\bbn_{\vartri}^n$,
define Gaussian polynomial and their symmetric version in $\cz$:

\begin{equation*}
  [\![\begin{matrix} t \end{matrix}]\!]!=[\![\begin{matrix} 1 \end{matrix}]\!][\![\begin{matrix} 2 \end{matrix}]\!]\cdots
  [\![\begin{matrix} t \end{matrix}]\!]\quad\text{with}\quad [\![\begin{matrix} m \end{matrix}]\!]=\dfrac{v^{2m}-1}{v^2-1}.
\end{equation*}

\begin{equation*}
  \left[\!\!\left[\begin{matrix} N\\ t \end{matrix}\right]\!\!\right]=\dfrac{[\![\begin{matrix} N \end{matrix}]\!]!}
  {[\![\begin{matrix} t \end{matrix}]\!]![\![\begin{matrix} N-t \end{matrix}]\!]!}=\prod_{1\leqs i\leqs t}\dfrac{v^{2(N-i+1)-1}}{v^{2i}-1},\quad
   \left[\!\!\left[\begin{matrix} \mu\\ \lz \end{matrix}\right]\!\!\right]=\prod_{1\leqs i\leqs n}\left[\!\!\left[\begin{matrix} \mu_i\\ \lz_i \end{matrix}\right]\!\!\right]
   \quad\text{and}\quad
   \begin{bmatrix}
     N\\t
   \end{bmatrix}=v^{-t(N-t)}\left[\!\!\left[\begin{matrix} N\\ t \end{matrix}\right]\!\!\right].
\end{equation*}

\section{The Ringel-Hall algebras of cyclic quivers}
Let $\Delta=\Delta(n)(n\geqs 2)$ be the cyclic quiver
$$\xymatrix{
&&&n\ar[ddlll]&&&\\
&&&&&&\\
1\ar[r]& 2\ar[r]& \cdot\ar[r]& \cdots\cdots\ar[r] &\cdot\ar[r]& \cdot\ar[r]& n-1\ar[uulll]
}$$
with vertex set $I=\bbz/n\bbz=\{1,2,\cdots,n\}$ and arrow set $\{i\ra i+1\mid i\in I\}$, and $k\Delta$ be the path
algebra of $\Delta$ over a field $k$. For a representation $M=(V_i,f_i)_i$ of $\Delta$,
let ${\bf dim}M=(\dim V_1,\dim V_2,\cdots,\dim V_n)\in\bbn I=\bbn^n$ and $\dim M=\sum\limits_{i=1}^n\dim V_i$
denote the dimension vector and the dimension of $M$, respectively, and let $[M]$ denote the isoclass(isomorphism class) of $M$.

A representation $M=(V_i,f_i)_i$ of $\Delta$ over $k$(or a $k\Delta$-module) is called {\em nilpotent}
if the composition $f_n\cdots f_2f_1:V_1\ra V_1$ is nilpotent, or equivalently,
one of the $f_{i-1}\cdots f_nf_1\cdots f_i:V_i\ra V_i(2\leqs i\leqs n)$ is nilpotent. By $\Rep^0\Delta=\Rep^0_k\Delta(n)$
we denote the category of finite dimensional nilpotent representations of $\Delta(n)$ over $k$. For each vertex $i\in I$, there is a
one-dimensional representation $S_i$ in $\Rep^0\Delta$ satisfying $(S_i)_i=k$ and $(S_i)_j=0$ for $j\neq i$.
It is known that $\{S_i\mid i\in I\}$ form a complete set of simple objects in $\Rep^0\Delta$.

Up to isomorphism, all indecomposable representations in $\Rep^0\Delta$ are given by
$S_i[l](i\in I~\text{and}~l\geqs 1)$ of length $l$ with top $S_i$.

Thus, for any $A=(a_{i,j})\in\ttz_\vartri^+(n)$,
\begin{equation*}
  M(A)=M_k(A)=\bps_{1\leqs i\leqs n,i<j}a_{i,j}S_i[j-i],
\end{equation*}
which means all finite dimensional nilpotent representations of $\ddz(n)$ are indexed by $\ttz_{\vartri}^+(n)$.

A matrix $A=(a_{i,j})\in\ttz_{\vartri}^+(n)$ is called {\em aperiodic} if, for each $l\geqs 1$, there exists $i\in\bbz$ such that $a_{i,i+l}=0$.
Otherwise, $A$ is called {\em periodic}. Denote by $\ttz_\vartri^{ap}(n)(\text{resp.}~\ttz_\vartri^{p}(n))$
the set of all aperiodic(resp. periodic) matrix in $\ttz_\vartri^+(n)$.  A nilpotent representation $M(A)$ is
called {\it aperiodic(resp. periodic)} if $A$ is aperiodic(resp. periodic).

For ${\bf a}=(a_i)\in\bbz_\vartri^n$ and ${\bf b}=(b_i)\in\bbz_\vartri^n$, the Euler form associated
with the cyclic quiver $\ddz(n)$ is the bilinear form $\lan-,-\ran:\bbz_\vartri^n\times\bbz_\vartri^n\lra\bbz$ defined by
\begin{equation*}
  \lan {\bf a,b}\ran=\sum_{i\in I}a_ib_i-\sum_{i\in I}a_ib_{i+1}.
\end{equation*}

Let $k$ be a {\em finite} field of $q_k$ elements and, for $A,B,C\in \ttz_{\vartri}^+(n)$, let $\fkH_{M_k(B),M_k(C)}^{M_k(A)}$
be the number of submodules $N$ of $M_k(A)$ such that
$N\cong M_k(C)$ and $M_k(A)/N\cong M_k(B)$. More generally, given $A,B_1,B_2,\cdots,B_m\in\ttz_{\vartri}^+(n)$, denote by $\fkH_{M_k(B_1),M_k(B_2),\cdots,M_k(B_m)}^{M_k(A)}$
the number of filtrations
\begin{equation*}
  M_0=M_k(A)\supseteq M_1\supseteq M_2\supseteq \cdots\supseteq M_{m-1}\supseteq M_m=0,
\end{equation*}
such that $M_{t-1}/M_t\cong M_k(B_t)$ for $1\leqs t\leqs m$. By \cite{Guo1995hallpoly,Ringel1993composition},
there is a polynomial $\vz_{B_1,B_2,\cdots,B_m}^A\in \bbz[v^2]$ in $v^2$, called the {\em Hall polynomials}\footnote{Using multiplications formulas, Du and the author have reproved the existence of Hall polynomials and a recursive formula was given via the degenerated order, for details, see \cite{DuZhaomultiplication}.}, such that for the finite field $k$, $$\vz_{B_1,B_2,\cdots,B_m}^A|_{v^2=q_k}=\fkH_{M_k(B_1),M_k(B_2),\cdots,M_k(B_m)}^{M_k(A)}.$$

The generic(twisted) Ringel-Hall algebra $\fkH_{\vartri}(n)$ of $\ddz(n)$ is by definition the $\cz$-algebra with basis $\{u_A=u_{[M(A)]}\mid A\in \ttz_{\vartri}^+(n)\}$ and multiplication given by
\begin{equation*}
  u_{B}u_{C}=v^{\lan{\bf dim} M(B),{\bf dim} M(C)\ran}\sum_{A\in\ttz_{\vartri}^+(n)}\vz^A_{B,C}({v^2})u_A
\end{equation*}

It is well known that for $A,B\in\ttz_\vartri^+(n)$, there holds
$$\lan{\bf dim} M(A),{\bf dim} M(B)\ran=\dim_k\Hom(M(A),M(B))-\dim_k\Ext^1(M(A),M(B)).$$

The $\cz$-subalgebra $\mathfrak{C}_\vartri(n)$ of $\fkH_\vartri(n)$ generated by $u_i^{(m)}=\dfrac{u_i^m}{[m]!},i\in I$ and $m\geqs 1$ called the (twisted){\em composition subalgebra}. Then $\fkC_\vartri(n)$ is also generated by $u_{[mS_i]},i\in I,m\geqs 1$ since $u_i^{(m)}=v^{m(m-1)}u_{[mS_i]}$. Clearly, $\fkH_\vartri(n)\text{and}~\fkC_\vartri(n)$ admit natural $\bbn^n$-grading by dimension vectors:
$$\fkH_\vartri(n)=\bps_{{\bf d}\in\bbn^n}\fkH_\vartri(n)_{\bf d}\quad\text{and}\quad\fkC_\vartri(n)=\bps_{{\bf d}\in\bbn^n}\fkC_\vartri(n)_{\bf d},$$
where $\fkH_\vartri(n)_{\bf d}$ is spanned by all $u_A$ with ${\bf dim} M(A)={\bf d}$ and $\fkC_\vartri(n)_{\bf d}=\fkC_\vartri(n)\cap \fkH_\vartri(n)_{\bf d}$.

Base change gives the $\bbq(v)$-algebra ${\bm\fkH}_\vartri(n)=\fkH_\vartri(n)\otm_\cz\bbq(v)$ and ${\bm\fkC}_\vartri(n)=\fkC_\vartri(n)\otm_{\cz}\bbq(v)$.

Denote by ${\bm\fkH}^-_\vartri(n)$ the opposite algebra of ${\bm\fkH}^+_\vartri(n)(={\bm\fkH}_\vartri(n))$. By extending $\bm\fkH_\vartri(n)$ to Hopf algebras
\begin{equation*}
  \bm\fkH_\vartri(n)^{\geqs0}={\bm\fkH}^+_\vartri(n)\otm\bbq(v)[K_1^{\pm1},\cdots,K_n^{\pm1}]~
  \text{and}~\bm\fkH_\vartri(n)^{\leqs0}=\bbq(v)[K_1^{\pm1},\cdots,K_n^{\pm1}]\otm {\bm\fkH}^-_\vartri(n),
\end{equation*}
we define the double Ringel-Hall algebra $\fkD_\vartri(n)$(cf. \cite{Xiao1997drinfeld} \& \cite{DengDuFu2012double})
to be a quotient algebra of the free product $\bm\fkH_\vartri(n)^{\geqs0}*\bm\fkH_\vartri(n)^{\leqs0}$ via a certain skew Hopf paring $\psi:\bm\fkH_\vartri(n)^{\geqs0}\times\bm\fkH_\vartri(n)^{\leqs0}\ra\bbq(v)$. In particular, there is a triangular decomposition
$$\fkD_\vartri(n)=\fkD_{\vartri}^+(n)\otm \fkD_\vartri^0(n)\otm \fkD_\vartri^-(n),$$
where $\fkD_\vartri^+(n)={\bm\fkH}^+_\vartri(n),\fkD_\vartri^0(n)=\bbq[K_1^{\pm1},\cdots,K_n^{\pm1}]$ and $\fkD_\vartri^-(n)={\bm\fkH}^-_\vartri(n)$.

\begin{thm}[{\cite[Thm 2.5.3]{DengDuFu2012double}}]\label{thm 1}
  Let $\U_v(\wih{\fkg\fkl}_n)$ be the quantum enveloping algebra of the loop algebra of $\fkg\fkl_n$ defined in \cite{Drinfeld1988new} or \cite[\S2.5]{DengDuFu2012double}. Then there is a Hopf algebra isomorphism $\fkD_\vartri(n)\cong \U_v(\wih{\fkg\fkl}_n)$.
\end{thm}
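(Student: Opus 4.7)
The plan is to construct an explicit Hopf algebra homomorphism $\Phi\colon \U_v(\wih{\fkg\fkl}_n)\to \fkD_\vartri(n)$ via Drinfeld's new realization and verify it is an isomorphism by matching triangular decompositions and graded dimensions on both sides. The starting point is Ringel's theorem that the assignment $E_i\mapsto u_i$ (together with $F_i\mapsto u_i^-$, $K_i^{\pm1}\mapsto K_i^{\pm1}$) extends to an isomorphism from $\U_v(\wih{\fks\fkl}_n)$ onto the subalgebra $\bm\fkC_\vartri(n)^+\otm\fkD_\vartri^0(n)\otm\bm\fkC_\vartri(n)^-$ of the Drinfeld double; this identifies the affine $\fks\fkl_n$ piece of the Drinfeld realization with the composition subalgebra.

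To lift from $\wih{\fks\fkl}_n$ to $\wih{\fkg\fkl}_n$ one must produce the imaginary root vectors $h_{i,\pm r}$ ($r\geqs 1$) and the central elements on the Ringel-Hall side. The key ingredient is the Schiffmann-Hubery construction of central elements $c_r\in{\bm\fkH}_\vartri(n)$ built from homogeneous semisimple modules $\bps_{i\in I}S_i[rn]$ lying in the periodic part of $\ttz_\vartri^+(n)$; together with $\bm\fkC_\vartri(n)$ these generate ${\bm\fkH}^+_\vartri(n)$, and symmetrically for the negative side. I would then define $\Phi$ on the Drinfeld generating series of $x_{j,\pm r}^\pm$ via iterated commutators of $u_j,u_j^-$ with the $c_r$ and their negative counterparts, and pass to the $h_{i,r}$ via the standard logarithmic change of variables expressing the Drinfeld-Cartan series in terms of the $K_i$ and the $h_{i,r}$.

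The main steps after these definitions are: (1) verify the new-realization relations in $\fkD_\vartri(n)$, most notably $[h_{i,r},x_{j,s}^\pm]=\pm\tfrac{1}{r}[r\lan\az_i,\az_j\ran]x_{j,r+s}^\pm$ and the higher $(i,j)$-Serre relations, by reducing each to an identity among Hall polynomials $\vz^A_{B,C}$ for extensions involving the indecomposables $S_i[l]$ and the homogeneous tubes; (2) check compatibility with the Hopf structure on the generators, using the Green-Xiao coproduct on ${\bm\fkH}_\vartri(n)$ and the standard structure on $\fkD_\vartri^0(n)$; (3) establish surjectivity of $\Phi$, which follows from the fact that $\bm\fkC_\vartri(n)^\pm$, the central elements $c_r$, and $\fkD_\vartri^0(n)$ together generate $\fkD_\vartri(n)$; (4) establish injectivity by comparing the triangular decompositions $\U_v^+\otm\U_v^0\otm\U_v^-$ and $\fkD_\vartri^+(n)\otm\fkD_\vartri^0(n)\otm\fkD_\vartri^-(n)$ weight-by-weight, using the PBW bases $E_A$ of ${\bm\fkH}_\vartri(n)$ recalled later in the paper.

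The main obstacle is step (1): the commutation relations between the generating series of the $h_{i,r}$ and the $x_{j,s}^\pm$, together with the higher Serre relations, translate on the Ringel-Hall side into delicate identities among Hall polynomials in the homogeneous tubes and require explicit formulas for the skew Hopf pairing $\psi$ defining the Drinfeld double. This is precisely where the bulk of the technical work of \cite{DengDuFu2012double} lies; once these relations are verified, the Hopf isomorphism follows from the matched triangular decomposition and the weight-graded dimension count provided by the PBW bases.
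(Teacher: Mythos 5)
The first thing to note is that the paper itself gives no proof of this statement: it is quoted directly from \cite[Thm 2.5.3]{DengDuFu2012double}, so there is no internal argument to compare against. At the level of strategy your outline essentially reproduces the route of that reference (Ringel's isomorphism identifying the composition part with quantum affine $\fks\fkl_n$, the Schiffmann--Hubery central elements for the imaginary part, verification of the Drinfeld new-realization relations, matching of triangular decompositions), so you have chosen the right framework.

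As a proof, however, it has a genuine gap, which you yourself partly concede: everything hinges on your step (1), the verification of the Drinfeld relations and of the Hopf compatibility through the skew pairing $\psi$, and this is not a routine reduction ``to identities among Hall polynomials'' but is precisely the technical content of the cited theorem; deferring it means the statement is not actually proved. Moreover, the concrete construction you propose for the loop-shifted real root vectors does not work as stated: the Schiffmann--Hubery elements $c_r$ are central in ${\bm\fkH}^+_\vartri(n)$, so iterated commutators of $u_j$ with the $c_r$ inside the positive half vanish identically, and in the double the brackets $[c_r^{\mp},u_j^{\pm}]$ are controlled by $\psi$ and must first be computed before one can even define candidate elements $x^{\pm}_{j,r}$; the standard treatments instead establish a presentation of $\fkD_\vartri(n)$ by the generators $u_i^{\pm}$, $K_i^{\pm1}$ and the central elements, and then match it against a presentation extracted from Drinfeld's realization. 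Finally, a smaller slip: $\{E_A\}$ is indexed by \emph{aperiodic} $A$ and is a PBW basis of $U_\cz^+$, not of ${\bm\fkH}_\vartri(n)$, so the injectivity/dimension count on the Hall-algebra side must use the full basis $\{u_A\}$ (or $\{m^{(A)}\}$ over all of $\ttz_\vartri^+(n)$) together with the tensor decomposition ${\bm\fkH}_\vartri(n)\cong{\bm\fkC}_\vartri(n)\otm\bbq(v)[c_1,c_2,\dots]$, which is again a nontrivial input rather than a consequence of what is recalled in this paper.
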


\section{Generic extension and distinguished words}

Let $\cm$ be the set of all isoclasses of representation in $\Rep^0\ddz$.
Given two objects $M,N\in\Rep^0\ddz$, there exists a unique(up to isomorphism) extension $G$ of $M$ by $N$ with minimal $\dim\End(G)$\cite{Bongartz1996degenerations,Reineke2001generic,DengDu2005monomial,DengDuParashallWang2008finite}.
The extension $G$ is called the {\em generic extension}\footnote{When the field $k$ is algebraically closed,
see \cite{Reineke2001generic} for a geometrical description.} of $M$ by $N$ and is
denoted by $G=M*N$. If we define $[M]*[N]=[M*N]$, then it is known form \cite{Reineke2001generic} that $*$ is
associative and $(\cm,*)$ is a monoid with identity $[0]$.

Besides the monoid structure, $\cm$ has also a poset structure. For two nilpotent representations $M,N\in\Rep^0\ddz$ with
${\bf dim} M={\bf dim} N$, define
$$N\leqs_{\dg}M\llra \dim\Hom(X,N)\geqs\dim\Hom(X,M),~\text{for all}~X\in\Rep^0\ddz.$$
see \cite{Zwara1997degenerations}. This gives rise to a partial order on the set of isoclasses of representations in $\Rep^0\ddz$,
called the {\em degeneration order}. Thus, it also induces a partial order on $\ttz_\vartri^+(n)$ by setting
$$A\leqs_{\dg} B\llra M(A)\leqs_{\dg} M(B).$$

Following \cite{BeilinsonLusztigMacPherson1990geometric} we define the order relation
$\preccurlyeq$ on $M_{\vartri,n}(\bbz)$ as follows. For $A\in M_{\vartri,n}(\bbz)$ and $i\neq j\in\bbz$, let
\begin{equation*}\sz_{i,j}(A)=
  \begin{cases}
    \sum\limits_{s\leqs i,t\geqs j}a_{s,t},&\text{if}~i<j,\\
    \sum\limits_{s\geqs i,t\leqs j}a_{s,t},&\text{if}~i>j.
  \end{cases}
\end{equation*}

For $A,B\in M_{\vartri,n(\bbz)}$, define
$$B\preccurlyeq A~\text{if and only if~}\sz_{i,j}(B)\leqs\sz_{i,j}(A)~\text{for all}~i\neq j.$$

Set $B\prec A$ if $B\preccurlyeq A$, and for some $(i,j)$ with $i\neq j,\sz_{i,j}(B)<\sz_{i,j}(A)$.

It is shown in \cite[Thm 6.2]{DuFu2010modified} that, if $A,B\in\ttz_{\vartri}^+(n)$, then

\begin{equation}\label{equivalent condition for degenerate order}
  B\leqs_{\dg}A\llra B\preccurlyeq A,~\bdim M(A)=\bdim M(B).
\end{equation}

An element $\lz\in\bbn_\vartri^n$ is called {\em sincere} if $\lz_i>0$ for all $i\in I$. Let $\wit{I}=I\cup\{\text{all sincere vectors in } \bbn_\vartri^n\}$, and $\wit{\ssz}(\text{resp.}~\ssz)$ be the set of words on the alphabet $\wit{I}(\text{resp.}~I)$.
For each $w={\bm a}_1{\bm a}_2\cdots{\bm a}_m\in\wit{\ssz}$, we set $M(w)=S_{{\bm a}_1}*S_{{\bm a}_2}*\cdots*S_{{\bm a}_m}$.
Then there is a unique $A\in\ttz_\vartri^+(n)$ such that $M(w)\cong M(A)$, and we set $\wp(w)=A$, which induces a surjective map $\wp:\wit{\ssz}\lra\ttz_\vartri^+(n),w\mapsto \wp(w)$. Note that $\wp$ induces a surjective map $\wp:\ssz\lra\ttz_\vartri^{ap}(n)$.

For $\bm a\in\wit{I}$, set $u_{\bm a}=u_{[S_{\bm a}]}$. Let $w={\bm a}_1{\bm a}_2\cdots{\bm a}_m\in\wit{\ssz}$ and $\vz^A_w(v^2)$ be the Hall polynomial $\vz^A_{B_1,B_2,\cdots,B_m}(v^2)$ with $M(B_i)\cong S_{\bm a_i}$. Any word $w={\bm a}_1{\bm a}_2\cdots{\bm a}_m\in\wit{\ssz}$ can be uniquely expressed in the {\em tight form} $w={\bm b}_1^{e_1}{\bm b}_2^{e_2}\cdots{\bm b}_t^{e_t}$ where $e_i=1$ if ${\bm b_i}$ is sincere, and $e_i$ is the number of consecutive occurrence of $\bm b_i$ if $\bm b_i\in I$. A filtration
$$M=M_0\supseteq M_1\supseteq \cdots\supseteq M_{t-1}\supseteq M_t=0$$ of nilpotent representation $M$ is called a {\em reduced} filtration of type $w$ if
$M_{r-1}/M_r\cong e_rS_{\bm b_r}$ for all $1\leqs r\leqs t$.
Denote by $\gz^A_{w}(v^2)$ the Hall polynomial $\vz^A_{B_1,B_2,\cdots,B_t}(v^2)$, where
$M(B_r)=e_rS_{\bm b_r}$. Thus, for any finite field $k$ with $q_k$ elements, $\gz^A_{w}(q_k)$ is the number of the reduced filtrations of $M(A)$ of type $w$.
A word $w$ is called {\em distinguished} if the Hall polynomial $\gz^{\wp(w)}_w(v^2)=1$.

{\bf Sometimes, writing $\gz^{A}_B=\gz^A_{w_B}$ means we have fixed a distinguished word $w_B\in\wp^{-1}(B)$ ahead of time.}

For $A\in\ttz_\vartri^+(n)$, denote by $\ell(A)=\ell(M(A))$ the {\em Loewy length} of $M(A)$ and
$$p(A)=\max\{l\mid a_{i,i+l}\neq 0~\text{for all}~1\leqs i\leqs n,l\in\bbn\}.$$

If no such $p(A)$ exists, set $p(A)=0$, in this case, $A$ is aperiodic. $A$ is called {\em strongly periodic} if $p(A)=\ell(A)$.

\begin{thm}[\cite{DengDuXiao2007generic}]
\begin{enumerate}[\rm(1)]
\item For any $A\in\ttz_\vartri^+(n)$, there exists uniquely a pair $(A_1,A_2)$ associated with $A$ such that $A_1$ is aperiodic, $A_2$ is strongly periodic and $M(A)\cong M(A_1)* M(A_2)$.

\item For aperiodic part $A_1$, there exists a distinguished word $w_{A_1}=j_1^{e_1}j_2^{e_2}\cdots j_t^{e_t}\in \ssz\cap \wp^{-1}(A_1)$.

\item For strongly periodic part $A_2$, there exists a distinguished word $w_{A_2}=\bm a_1\bm a_2\cdots \bm a_p\in \wit{\ssz}\cap \wp^{-1}(A_2)$,
  moreover, $S_{\bm a_s}\cong \soc^{p-s+1}M(A_2)/\soc^{p-s}M(A_2),1\leqs s\leqs p=p(A)$.

\item $w_{A_1}w_{A_2}=j_1^{e_1}j_2^{e_2}\cdots j_t^{e_t}\bm a_1\bm a_2\cdots \bm a_p$ is a distinguished word of $A$.
\end{enumerate}
\end{thm}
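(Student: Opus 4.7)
The plan is to prove the decomposition in (1) by an induction that extracts a maximal strongly periodic submodule, read off the distinguished words in (2) and (3) from the top and socle structures of $M(A_1)$ and $M(A_2)$ respectively, and then glue them in (4) via the associativity of the monoid $(\cm,*)$.

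For part (1), I would argue by induction on the dimension of $M(A)$. If $p(A)=0$ then $A$ is aperiodic and we set $A_1=A$, $A_2=0$. Otherwise let $p=p(A)$; since the $p$-th superdiagonal of $A$ is everywhere positive, a strongly periodic module of the form $L=\bps_{i\in I} S_i[p]$ embeds in $M(A)$, and the quotient $M(A)/L$ has strictly smaller dimension. Applying the inductive hypothesis to the quotient and then reassembling using the associativity of $*$ and the fact that $L$ is strongly periodic produces $A_1$ aperiodic and $A_2$ strongly periodic with $M(A)\cong M(A_1)*M(A_2)$. Uniqueness follows from the minimality of $\dim\End$ defining the generic extension, combined with the observation that an aperiodic module and a strongly periodic one share no common indecomposable summand of maximal Loewy length, so any two such pairs $(A_1,A_2)$ must coincide.

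For part (2), since $A_1\in\ttz_\vartri^{ap}(n)$, the strong monomial basis property of \cite{DengDu2005monomial} supplies a distinguished word $w_{A_1}\in\ssz\cap\wp^{-1}(A_1)$ consisting only of letters in $I$: inductively one selects a simple top summand $e_1 S_{j_1}$ of $M(A_1)$, peels it off, and proceeds on the (still aperiodic) kernel. For part (3), I would read $w_{A_2}$ directly from the socle filtration of $M(A_2)$: since $p(A_2)=\ell(A_2)=p$, each quotient $\soc^{p-s+1}M(A_2)/\soc^{p-s}M(A_2)$ is a semisimple sincere module, hence corresponds to a sincere vector $\bm a_s\in\wit I$. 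The word $w_{A_2}=\bm a_1\cdots \bm a_p$ is distinguished because any reduced filtration of $M(A_2)$ whose successive quotients are these sincere semisimple layers is forced to coincide with the socle filtration, as at each step the semisimple sincere quotient must equal the top socle layer of the remaining submodule.

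For part (4), associativity of $*$ gives $M(w_{A_1}w_{A_2})=M(w_{A_1})*M(w_{A_2})\cong M(A_1)*M(A_2)\cong M(A)$, hence $\wp(w_{A_1}w_{A_2})=A$. To show $\gz^A_{w_{A_1}w_{A_2}}=1$, I would establish that every reduced filtration of $M(A)$ of this type splits at the seam between the $w_{A_1}$-prefix and the $w_{A_2}$-suffix: sincerity of $\bm a_1,\dots,\bm a_p$ forces the submodule cut out by the suffix to lie inside the canonical strongly periodic summand of (1), and aperiodicity of the prefix forces equality, after which the distinguished properties of (2) and (3) contribute factors of $1$ each. The principal obstacle is this rigidity argument together with the uniqueness statement in (1); both hinge on a clean intrinsic characterization of the strongly periodic summand of $M(A)$ in terms of socle-layer sincerity, and it is here that one must argue carefully that no other cut point along the filtration can accidentally produce the correct sincere semisimple quotients.
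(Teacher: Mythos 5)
The paper itself offers no proof of this theorem (it is quoted verbatim from \cite{DengDuXiao2007generic}), so your proposal can only be measured against the construction in that source, and there it has a decisive gap in part (1): peeling off the direct summand $L=\bigoplus_{i\in I}S_i[p]$ with $p=p(A)$ and absorbing it into the strongly periodic companion does not reproduce $M(A)$, because the generic extension $*$ does not interact with direct sums in the way your ``reassembly'' needs. Concretely, for $n=2$ take $M(A)=S_1[2]\oplus S_1\oplus S_2$, so $p(A)=1$, $L=S_1\oplus S_2$ and $M(A)/L\cong S_1[2]$ is aperiodic; your recipe outputs the pair $M(A_1)=S_1[2]$, $M(A_2)=S_1\oplus S_2$, but since $\Ext^1(S_1[2],S_1)\neq 0$ and $\Ext^1(S_1[2],S_2)=0$, the generic extension is $M(A_1)*M(A_2)\cong S_1[3]\oplus S_2\neq M(A)$ (endomorphism dimensions $3<5$). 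The unique valid pair here is $M(A_2)=\soc M(A)=S_1\oplus S_2^2$, $M(A_1)=M(A)/\soc M(A)=S_1$: the strongly periodic part is $\soc^{p(A)}M(A)$ (consistent with part (3), which forces $\ell(A_2)=p(A)$), and it is in general \emph{not} assembled from a maximal periodic direct summand. Your uniqueness argument (``no common indecomposable summand of maximal Loewy length'') is not a proof either, and part (4), which explicitly leans on an ``intrinsic characterization of the strongly periodic summand'', inherits the error because the characterization you implicitly use is the wrong one.

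There is a second genuine gap in part (3). What you verify is that the socle filtration is the only reduced filtration of $M(A_2)$ with those sincere semisimple layers (and even this should be argued bottom-up --- a semisimple submodule with the dimension vector of $\soc M(A_2)$ must equal $\soc M(A_2)$ --- since your top-down step only yields $M_s\supseteq\rad M_{s-1}$, not $M_s=\soc^{p-s}M(A_2)$). But ``distinguished'' means $\gz^{\wp(w)}_{w}=1$ together with $w_{A_2}\in\wp^{-1}(A_2)$, so you must also prove $\wp(w_{A_2})=A_2$, i.e.\ $S_{\bm a_1}*\cdots*S_{\bm a_p}\cong M(A_2)$, equivalently that $M(A_2)$ is maximal in the degeneration order among modules admitting such a filtration; this is the substantive content of (3), is nowhere addressed in your sketch, and requires an induction on $p$ using the explicit description of generic extensions by semisimple modules as in \cite{DengDu2005monomial}. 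Likewise in (2), peeling off an arbitrary simple top summand need not give Hall number $1$; aperiodicity is used precisely to choose the vertex at each step, so either cite \cite{DengDu2005monomial} outright or supply that choice. Once (1) and (3) are repaired with the socle-based decomposition $M(A_2)=\soc^{p(A)}M(A)$, your outline of (4) --- forcing the tail of any reduced filtration of type $w_{A_1}w_{A_2}$ to equal $M(A_2)$ --- becomes the right idea, but as written it rests on the unproven (and, in your version, incorrect) description of the strongly periodic part.
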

\begin{rmk}
  A matrix algorithm was showed in \cite{DuZhaomultiplication} for taking distinguished words.
\end{rmk}

\section{A recursive formula for PBW basis $E_A$}\label{1}
Let $\U=\U_v(\wih{\fks\fkl}_n)$ be the quantum affine $\fks\fkl_n(n\geqs 2)$ over $\bbq(v)$, and let $E_i,F_i,K_i^\pm(i\in I)$ be the generators, for
details see \cite{Lusztig1993introduction,Jantzen1995lectures}. Then $\U$ admits a triangular decomposition $\U=\U^-\U^0\U^+$, where $\U^+$(resp. $\U^-,\U^0$) is the subalgebra generated by the $E_i$(resp. $F_i$, $K_i^\pm~(i\in I)$). Denote by $U_\cz^+$ the Lusztig integral form of $\U^+$, which is
generated by all the divided powers $E_i^{(m)}=\tfrac{E_i^m}{[m]!}$. The relation of Ringel-Hall algebras and quantum affine $\fks\fkl_n$ is described in the following.

\begin{thm}[\cite{Ringel1993composition}]\label{isomorphism theorem for composition algebra}
There is a $\cz$-algebra isomorphism $$\fkC_\vartri(n)\iso U_\cz^+,~u_i^{(m)}\mapsto E_i^{(m)},~i\in I,~m\geqs 1,$$
  and by base change to $\bbq(v)$, there is an algebra isomorphism $\bm\fkC_\vartri(n)\iso \U^+$.
\end{thm}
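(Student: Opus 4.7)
The plan is to follow Ringel's original strategy from \cite{Ringel1993composition}. The first step is to establish a $\cz$-algebra homomorphism $\phi\colon U_\cz^+\to\fkC_\vartri(n)$ sending $E_i^{(m)}\mapsto u_i^{(m)}$ by verifying that the $u_i^{(m)}\in\fkC_\vartri(n)$ satisfy the quantum Serre relations defining $U_\cz^+$. Since $\fkC_\vartri(n)$ is generated by the $u_i^{(m)}$ by definition, this map is automatically surjective, reducing the theorem to an injectivity statement.

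For the Serre relations, I would compute directly the Hall products in $\fkH_\vartri(n)$ involving simples $S_i, S_j$. From the multiplication formula
\[
u_B u_C = v^{\lan\bdim M(B),\bdim M(C)\ran}\sum_{A\in\ttz_\vartri^+(n)}\vz^A_{B,C}(v^2)u_A,
\]
together with the fact that in $\Rep^0\ddz(n)$ one has $\Ext^1(S_i,S_j)\neq 0$ exactly when $j\equiv i+1\pmod n$ with one-dimensional Ext, only two cases arise. If $i$ and $j$ are non-adjacent vertices, then $u_iu_j=v^{\lan\bdim S_i,\bdim S_j\ran}u_{[S_i\oplus S_j]}$, and comparison with $u_ju_i$ gives commutativity. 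If $j=i\pm 1$, the only new module that can occur in the relevant products is $S_i[2]$ (respectively $S_j[2]$) summed with a copy of a simple, and the Hall numbers $\vz^A_{S_i,S_i,S_j}$, $\vz^A_{S_i,S_j,S_i}$, $\vz^A_{S_j,S_i,S_i}$ reduce to direct counts of reduced filtrations. After summing with the Euler-form twists, the combination $u_i^{(2)}u_j-u_iu_ju_i+u_ju_i^{(2)}$ collapses to zero, yielding the quantum Serre relation.

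Injectivity will be proved by matching $\bbn I$-graded dimensions. The graded character of $\U^+=U_\cz^+\otm_\cz\bbq(v)$ is encoded by the Kostant partition function of the positive root system of $\wih{\fks\fkl}_n$, with the imaginary roots $\ell\delta$ carrying multiplicity $n-1$. On the Hall algebra side, the strong monomial basis property of \cite{DengDu2005monomial} (together with Schiffmann's aperiodicity indexing) provides a $\cz$-basis of $\fkC_\vartri(n)$ parametrised by $\ttz_\vartri^{ap}(n)$, and a standard generating-function identity shows that the number of aperiodic matrices of dimension vector $\bf d$ equals the Kostant multiplicity in $\U^+_{\bf d}$. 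Hence $\phi\otm\bbq(v)$ is a graded surjection between weight spaces of equal finite dimension, so it is an isomorphism $\bm\fkC_\vartri(n)\iso\U^+$; the fact that $\phi$ sends $E_i^{(m)}$ to $u_i^{(m)}$ and that both integral forms are $\cz$-free with bases derived from PBW/monomial data (as recalled in the surrounding theorem) then promotes this to the $\cz$-level isomorphism $U_\cz^+\iso\fkC_\vartri(n)$.

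The main obstacle is the dimension count underlying injectivity: one must reconcile the aperiodic indexing of $\fkC_\vartri(n)$ with the explicit Kostant partition function for affine $\fks\fkl_n$, which is where aperiodicity of the matrices $A\in\ttz_\vartri^{ap}(n)$ is crucial, since periodic indecomposables correspond to the extra $n$-th direction in $\wih{\fkg\fkl}_n$ that is absent from $\U^+(\wih{\fks\fkl}_n)$. The Serre-relation verification, while it requires some care about the wrap-around arrow $n\to 1$ and the resulting asymmetry between $\Ext^1(S_i,S_{i+1})$ and $\Ext^1(S_{i+1},S_i)$, is a bounded computation whose only subtlety lies in bookkeeping the Euler-form twists.
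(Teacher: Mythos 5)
The paper does not actually prove this statement: it is quoted as Ringel's theorem from \cite{Ringel1993composition}, so there is no internal argument to compare against, and your route (quantum Serre relations give a surjection $U_\cz^+\twoheadrightarrow\fkC_\vartri(n)$, a graded dimension count gives injectivity) is essentially the standard one going back to Ringel. At the level of a sketch most of it is sound: the counting step is legitimate, provided you quote the graded dimension of $\U^+$ (quantum Gabber--Kac/PBW for affine type) and, to avoid circularity, use the strong monomial basis property of \cite{DengDu2005monomial} in its Hall-algebra form --- spanning of $\fkC_\vartri(n)$ by the $m^{(A)}$ with $A\in\ttz_\vartri^{ap}(n)$, linear independence via the triangular relation \eqref{triangluarrelation} against the $\wit{u}_A$ --- rather than in the form ``basis of $U_\cz^+$'', which already presupposes the isomorphism being proved. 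Also, the final ``promotion'' to the $\cz$-level needs no freeness argument: since $U_\cz^+$ is generated over $\cz$ by the $E_i^{(m)}$ and $\fkC_\vartri(n)$ is by definition the $\cz$-subalgebra generated by the $u_i^{(m)}$, the image of $U_\cz^+$ is exactly $\fkC_\vartri(n)$, and injectivity is inherited from the $\bbq(v)$-level.

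The genuine gap is the case $n=2$, which is precisely the case this paper uses most heavily. Your verification of the relations assumes that for adjacent vertices the defining relation is the cubic one $E_i^{(2)}E_j-E_iE_jE_i+E_jE_i^{(2)}=0$ and that ``the only new module is $S_i[2]$''; this is correct only when the Cartan entry is $-1$, i.e.\ for $n\geqs 3$. For $n=2$ the two vertices are joined by arrows in both directions, $\lan e_1,e_2\ran=\lan e_2,e_1\ran=-1$, the Cartan matrix is that of $\wih{\fks\fkl}_2$ with off-diagonal entries $-2$, and the defining relations are the quartic Serre relations $\sum_{k=0}^{3}(-1)^k E_i^{(k)}E_jE_i^{(3-k)}=0$; checking them in $\fkH_\vartri(2)$ is a longer computation involving all modules of dimension vector $3e_i+e_j$ (such as $S_i[2]\oplus 2S_i$, $S_j[2]\oplus 2S_i$, $3S_i\oplus S_j$), not just a single nonsplit extension. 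Moreover your dichotomy ``non-adjacent versus $j=i\pm1$'' silently excludes $n=2$ (there are no non-adjacent pairs, and $i+1=i-1$), so as written the surjectivity step simply does not cover $\wih{\fks\fkl}_2$. The fix is a known, bounded computation, but it must be done (or cited) separately; without it the proof of the theorem as stated, for all $n\geqs2$, is incomplete.
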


For $A\in\ttz_\vartri^+(n)$, let $\dz(A)=\dim\End(M(A))-\dim M(A)$ and
$$\wit{u}_A=v^{\dz(A)}u_{A}=v^{\dim\End(M(A))-\dim M(A)}u_{A}.$$

For each $w={\bm a}_1{\bm a}_2\cdots{\bm a}_m\in\wit{\ssz}$ with tight form $w={\bm b}_1^{e_1}{\bm b}_2^{e_2}\cdots{\bm b}_t^{e_t}$,
define the monomial associated with $w$ as
$$m^{(w)}=\wit{u}_{e_1\bm b_1}\cdots \wit{u}_{e_t\bm b_t}.$$

For each $A\in\ttz_\vartri^+(n)$, pick $w_A\in\wp^{-1}(A)\cap\wit{\ssz}$, $\bbd=\{w_A\mid A\in\ttz_\vartri^+(n)\}$ is called a {\em section} of $\wit{\ssz}$ over $\ttz_\vartri^+(n)$. A section is called a {\em distinguished section} of $\wit{\ssz}$ over $\ttz_\vartri^+(n)$ if all $w_A$ are chosen to be distinguished word. When restricted to $\ttz_\vartri^{ap}(n)$, we obtain a distinguished section of $\ssz$ over $\ttz_\vartri^{ap}(n)$.

\begin{thm}[\cite{DengDuXiao2007generic}]
\begin{enumerate}[\rm(1)]
\item For $A\in\ttz_\vartri^+(n)$, take $w_A\in\wp^{-1}(A)$, we have a triangular relation
 \begin{equation}\label{triangluarrelation}
  m^{(A)}=m^{(w_A)}=\wit{u}_A
  +\sum_{\stackrel{T\prec A,T\in\ttz_\vartri^+(n)}{\bdim M(A)=\bdim M(T)}}v^{\dz(A)-\dz(T)}\gz^T_A(v^2)\wit{u}_T,
 \end{equation}
In particular, $\fkH_\vartri(n)$ is generated by $\{u_\lz=u_{[S_\lz]}\mid \lz\in\bbn_\vartri^n\}$,
where $S_\lz=\oplus_{i=1}^n\lz_i S_i$ is the semisimple representation of $\ddz(n)$.

\item For a given distinguished section $\bbd=\{w_A\mid A\in\ttz_\vartri^+(n)\}$ of $\wit{\ssz}$ over $\ttz_\vartri^+(n)$, $\{m^{(A)}\mid A\in\ttz_\vartri^+(n)\}$ is a $\cz$-basis of $\fkH_\vartri(n)$ and $\{m^{(A)}\mid A\in\ttz_\vartri^{ap}(n)\}$ is a $\cz$-basis of $U_\cz^+$.
\end{enumerate}
\end{thm}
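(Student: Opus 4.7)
The plan is to expand $m^{(w_A)} = \wit{u}_{e_1\bm{b}_1}\cdots \wit{u}_{e_t\bm{b}_t}$ by iterated twisted Hall multiplication, identify the coefficients with Hall polynomials, and localize the support by the generic-extension characterization of the degeneration order. Applying the product rule in $\fkH_\vartri(n)$ step by step yields
\begin{equation*}
  m^{(w_A)} \;=\; \sum_{T\in \ttz_\vartri^+(n)} v^{c(A,T)}\,\gz^T_{w_A}(v^2)\, u_T,
\end{equation*}
where $\gz^T_{w_A}$ is the Hall polynomial counting reduced filtrations of $M(T)$ of type $w_A$ (this is precisely the associativity of iterated filtrations), and $c(A,T)$ is the accumulated $v$-exponent coming from the Euler forms $\lan \bdim-,\bdim-\ran$ in each twisted multiplication step together with the diagonal twists $\wit{u}_{e_r \bm{b}_r}=v^{\dz(e_r S_{\bm{b}_r})}u_{e_r\bm{b}_r}$. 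Using $\wit{u}_A = v^{\dz(A)} u_A$ and $\lan \bdim M, \bdim N\ran = \dim\Hom(M,N) - \dim\Ext^1(M,N)$, these accumulated exponents telescope into $\dz(A) - \dz(T)$ on the $\wit{u}_T$-side.

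To localize the support, I would invoke the generic-extension description $M(A) = S_{\bm{a}_1} \ast \cdots \ast S_{\bm{a}_m}$: since $M(A)$ is, up to isomorphism, the unique minimum with respect to $\leqs_{\dg}$ among extensions with the prescribed successive factors, any $T$ with $\gz^T_{w_A}\neq 0$ admits such a filtration and therefore satisfies $T \leqs_{\dg} A$. By \eqref{equivalent condition for degenerate order} this translates into $T \preccurlyeq A$ together with $\bdim M(T) = \bdim M(A)$. The leading term $T = A$ has coefficient $\gz^A_{w_A}(v^2) = 1$ by the defining property of a distinguished word, completing \eqref{triangluarrelation}. The ``in particular'' clause then follows at once: in the tight form of $w_A$, each block $e_r\bm{b}_r$ encodes a semisimple $S_\lz$ with $\lz \in \bbn_\vartri^n$ (simple or sincere), so $\wit{u}_{e_r\bm{b}_r}$ is a $v$-scalar multiple of $u_\lz = u_{[S_\lz]}$; inverting the unitriangular system from part (2) expresses each $u_A$ as a $\cz$-combination of such monomials and hence of the $u_\lz$.

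For part (2), I would order $\ttz_\vartri^+(n)$ within each fixed dimension-vector block by any linear extension of $\preccurlyeq$, so that \eqref{triangluarrelation} presents the transition from $\{\wit{u}_A\}$ to $\{m^{(A)}\}$ as a (block-)unitriangular matrix over $\cz$ with ones on the diagonal; since $\{\wit{u}_A\mid A\in \ttz_\vartri^+(n)\}$ is already a $\cz$-basis of $\fkH_\vartri(n)$, so is $\{m^{(A)}\mid A\in\ttz_\vartri^+(n)\}$. When $A\in\ttz_\vartri^{ap}(n)$, one chooses $w_A\in\ssz$ (letters in $I$ only), so $m^{(A)}$ becomes a monomial in the divided powers $u_i^{(m)}$ and thus lies in $\fkC_\vartri(n) \cong U_\cz^+$ by Theorem \ref{isomorphism theorem for composition algebra}; restricting the unitriangular system to aperiodic indices (all $T$ appearing are again aperiodic because the $T \preccurlyeq A$ condition combined with $\bdim M(T)=\bdim M(A)$ forces $T$ to share the vertex-support pattern that keeps $m^{(w_A)}$ inside $\fkC_\vartri(n)$) yields the second basis claim.

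The principal technical challenge is the precise identification of the accumulated exponent $c(A,T)$ with $\dz(A) - \dz(T)$, i.e.\ the cancellation of a multi-step sum of Euler forms and $\dim\End$ contributions into this clean homological-defect difference; the other ingredients---uniqueness and minimality of generic extensions, the reduced-filtration interpretation of $\gz^T_{w_A}$, and the defining property of distinguished words---are readily available from \cite{DengDu2005monomial,DengDuXiao2007generic} and the Hall-polynomial existence in \cite{Ringel1993composition,Guo1995hallpoly}.
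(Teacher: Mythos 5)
Your outline follows the same route as the source of this theorem (the paper itself gives no proof; it quotes \cite{DengDuXiao2007generic}): expand the monomial by iterated twisted multiplication, identify the coefficients with the Hall polynomials $\gz^T_{w_A}$ by associativity, confine the support via the extremality of the generic extension together with \eqref{equivalent condition for degenerate order}, and finish by unitriangularity. (Minor slip: $M(A)=M(w_A)$ is the unique \emph{maximal} element under $\leqs_{\dg}$ --- equivalently the extension with minimal $\dim\End$ --- among modules filtered as prescribed; you wrote ``minimum'', though you then use the correct inequality $T\leqs_{\dg}A$.) The genuine gap is exactly the step you defer. The expansion gives the coefficient of $\wit{u}_T$ as $v^{c(w_A)-\dz(T)}\gz^T_{w_A}(v^2)$ with $c(w_A)=\sum_r\dz(e_rS_{\bm b_r})+\sum_{r<s}\lan e_r\bdim S_{\bm b_r},e_s\bdim S_{\bm b_s}\ran$ independent of $T$, so \eqref{triangluarrelation} is equivalent to the identity $c(w_A)=\dz(A)$, and this does \emph{not} follow from formal ``telescoping'': it fails for general words in tight form. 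For $n=2$ take $w=1^2\bm a$ with $\bm a=(1,1)$ sincere; then $M(\wp(w))=S_1[2]\oplus 2S_1$, so $\dz(\wp(w))=7-4=3$, while $c(w)=\dz(2S_1)+\dz(S_1\oplus S_2)+\lan(2,0),(1,1)\ran=2+0+0=2$; correspondingly $\gz^{\wp(w)}_w(v^2)=1+v^2$ and the leading coefficient of $\wit u_{\wp(w)}$ is $v^{-1}+v$, not $1$. Thus the statement must be read with $w_A$ distinguished (as the paper's convention for $\gz^T_A$ indicates), and the identity $c(w_A)=\dz(A)$ is precisely where the structure of distinguished words (in \cite{DengDuXiao2007generic}, their construction from socle/radical layers and an induction on generic extensions by semisimple modules) enters. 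Your proposal names this as ``the principal technical challenge'' but supplies no argument, so part (1) is not established.

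For part (2), the first claim (basis of $\fkH_\vartri(n)$) by block-unitriangularity is fine, and so is the ``in particular'' generation statement in (1). But your justification of the second claim rests on a false assertion: for aperiodic $A$ it is not true that all $T$ occurring in \eqref{triangluarrelation} are aperiodic. The paper's own computations give, e.g., $m^{(A_{6})}=\wit{u}_{A_{6}}+v^{-1}\wit{u}_{A_{12}}+v^{-2}\wit{u}_{A_{15}}+(v^{-2}+v^{-4})\wit{u}_{A_{16}}+(v^{-4}+v^{-6})\wit{u}_{A_{18}}$ with $A_{15},A_{18}$ periodic; controlling exactly these periodic terms is the whole point of the PBW elements $E_A$ in \eqref{formula of E_A}. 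Consequently ``restricting the unitriangular system to aperiodic indices'' yields linear independence of $\{m^{(A)}\mid A\in\ttz_\vartri^{ap}(n)\}$ and membership in $U_\cz^+$ (choosing $w_A\in\ssz$), but not spanning. Spanning is the (strong) monomial basis property of \cite{DengDu2005monomial}, which \cite{DengDuXiao2007generic} invoke and which needs a separate induction showing every monomial in the divided powers $u_i^{(m)}$ is a $\cz$-combination of the $m^{(B)}$ with $B$ aperiodic, despite the periodic $\wit u_T$'s occurring at intermediate stages (compare Lemma \ref{decomp hall alg as v.s} and the definition of $E_A$). This step is missing from your proposal.
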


In order to obtain the canonical basis of $U_\cz^+$, Deng, Du and Xiao gave a construction of PBW basis $E_A$.

\begin{defn}
  For the given distinguished section $\bbd=\{w_A\mid A\in\ttz_\vartri^+(n)\}$, any ${\bf d}\in\bbn^n_\vartri$
  and $A\in\ttz_\vartri^{ap}(n)_{\bf d}=\{A\in\ttz_\vartri^{ap}(n)\mid\bdim M(A)=\bf d\}$, if $A$ is minimal under $\preccurlyeq$, put $E_A=m^{(A)}$.
  Otherwise, put
  $$E_A=m^{(A)}-\sum_{B\in\ttz_\vartri^{ap}(n)_{\bf d},B\prec A}v^{\dz(A)-\dz(B)}\gz^B_{A}(v^2)E_B.$$
\end{defn}

Or equivalently, we have
$$m^{(A)}=E_A+\sum_{B\in\ttz_\vartri^{ap}(n)_{\bf d},B\prec A}v^{\dz(A)-\dz(B)}\gz^B_{A}(v^2)E_B.$$

The interesting property of $E_A$ is the following
\begin{thm}[\cite{DengDuXiao2007generic}]
\begin{enumerate}[\rm(1)]
\item Let $\bbd=\{w_A\mid A\in\ttz_\vartri^+(n)\}$ be a given distinguished section. For each ${\bf d}\in\bbn^n_\vartri$ and $A\in\ttz_\vartri^{ap}(n)_{\bf d}$, we have
    \begin{equation}\label{formula of E_A}
      E_A=\wit{u}_A+\sum_{C\in\ttz_\vartri^{p}(n)_{\bf d},C\prec A}\eta^{C}_A\wit{u}_C,\quad\text{where}~~\eta^C_A\in v^{-1}\bbz[v^{-1}].
    \end{equation}
\item The PBW basis $\{E_A\mid A\in\ttz_\vartri^{ap}(n)\}$ is independently of the selection of distinguished section.
\end{enumerate}
\end{thm}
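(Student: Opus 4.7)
For part~(1), I would argue by induction on $A\in\ttz^{ap}_\vartri(n)_{\bf d}$ under the order $\preccurlyeq$. If $A$ is $\preccurlyeq$-minimal in $\ttz^{ap}_\vartri(n)_{\bf d}$, then by definition $E_A=m^{(A)}$, and any $T\prec A$ in $\ttz^+_\vartri(n)_{\bf d}$ is forced to be periodic (otherwise it would contradict the minimality of $A$ among the aperiodics). The triangular relation~(\ref{triangluarrelation}) then already puts $E_A$ in the required form
\begin{equation*}
E_A=\wit{u}_A+\sum_{C\in\ttz^{p}_\vartri(n)_{\bf d},\,C\prec A}v^{\dz(A)-\dz(C)}\gz^C_A(v^2)\,\wit{u}_C,
\end{equation*}
and the integrality $v^{\dz(A)-\dz(C)}\gz^C_A(v^2)\in v^{-1}\bbz[v^{-1}]$ follows from the standard degree estimate for the triangular monomial coefficients established in \cite{DengDuXiao2007generic}.

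For the inductive step, I would substitute the hypothesised expansion of each $E_B$ ($B\in\ttz^{ap}_\vartri(n)_{\bf d}$, $B\prec A$) into the defining recursion $E_A=m^{(A)}-\sum v^{\dz(A)-\dz(B)}\gz^B_A(v^2)E_B$ and expand $m^{(A)}$ using~(\ref{triangluarrelation}). The leading $\wit{u}_B$-term of each $E_B$ cancels exactly against the corresponding aperiodic $\wit{u}_B$-contribution coming from $m^{(A)}$, so only periodic $\wit{u}_C$-terms survive, and reading off the coefficient of $\wit{u}_C$ yields the recursion
\begin{equation*}
\eta^C_A=v^{\dz(A)-\dz(C)}\gz^C_A(v^2)-\sum_{B\in\ttz^{ap}_\vartri(n)_{\bf d},\,C\prec B\prec A}v^{\dz(A)-\dz(B)}\gz^B_A(v^2)\,\eta^C_B.
\end{equation*}
Each $v^{\dz(A)-\dz(T)}\gz^T_A(v^2)$ with $T\prec A$ lies in $v^{-1}\bbz[v^{-1}]$ by the cited estimate, and by induction $\eta^C_B\in v^{-1}\bbz[v^{-1}]$; since $v^{-1}\bbz[v^{-1}]$ is closed under addition and multiplication, the recursion forces $\eta^C_A\in v^{-1}\bbz[v^{-1}]$.

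For part~(2), let $\bbd$ and $\bbd'$ be two distinguished sections and write $E_A$, $E'_A$ for the resulting PBW bases. Both $E_A$ and $E'_A$ lie in $\fkC_\vartri(n)$, being $\cz$-linear combinations of the monomials $m^{(B)}$ indexed by aperiodic $B$. By part~(1),
\begin{equation*}
E_A-E'_A=\sum_{C\in\ttz^{p}_\vartri(n)_{\bf d},\,C\prec A}(\eta^C_A-\eta'^C_A)\,\wit{u}_C
\end{equation*}
is a $\cz$-combination of purely periodic basis vectors of $\fkH_\vartri(n)$. It therefore suffices to prove the linear-algebra lemma
\begin{equation*}
\fkC_\vartri(n)\cap\bigoplus_{C\in\ttz^{p}_\vartri(n)}\cz\,\wit{u}_C=0.
\end{equation*}
Given $x=\sum_{B\in\ttz^{ap}_\vartri(n)}d_B\,m^{(B)}$ in this intersection with some $d_B\neq 0$, I would pick a $\preccurlyeq$-maximal $B_0$ with $d_{B_0}\neq 0$: the triangularity $m^{(B)}=\wit{u}_B+(\text{strictly lower})$ forces the $\wit{u}_{B_0}$-coefficient of $x$ to equal $d_{B_0}\neq 0$, contradicting the absence of aperiodic components in $x$. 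The principal technical obstacle throughout is the degree estimate $v^{\dz(A)-\dz(T)}\gz^T_A(v^2)\in v^{-1}\bbz[v^{-1}]$, which encodes a delicate compatibility between the degeneration order, the Euler form, and the valuation of Hall polynomials; I would invoke it as a black box from the cited reference, leaving only the cancellations above and the triangularity-based uniqueness argument to finish both parts.
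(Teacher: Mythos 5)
This theorem is quoted in the paper from \cite{DengDuXiao2007generic} without proof, so there is no internal argument to compare against; judging your proposal on its own, the structural half is sound but the decisive integrality claim is not actually proved. What does work: your observation that the aperiodic $\wit{u}_B$-terms cancel in the defining recursion, giving $\eta^C_A=v^{\dz(A)-\dz(C)}\gz^C_A(v^2)-\sum_{B}v^{\dz(A)-\dz(B)}\gz^B_A(v^2)\eta^C_B$, is exactly the paper's Theorem~\ref{recursive formula of coeff of E_A}; and your linear-algebra lemma $\fkC_\vartri(n)\cap\bigoplus_{C\in\ttz_\vartri^{p}(n)}\cz\,\wit{u}_C=0$, proved by picking a $\preccurlyeq$-maximal aperiodic index, is Lemma~\ref{decomp hall alg as v.s}; together with the qualitative form of (1) (namely $E_A\in\wit{u}_A+{\bf P}$, which your cancellation argument does give) this yields part (2), in the same way as the paper's uniqueness lemma $\sce_A=E_A$.

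The genuine gap is in part (1): the whole content is $\eta^C_A\in v^{-1}\bbz[v^{-1}]$, and you derive it from the assertion that every coefficient $v^{\dz(A)-\dz(T)}\gz^T_A(v^2)$, $T\prec A$, in the triangular relation \eqref{triangluarrelation} for a distinguished word already lies in $v^{-1}\bbz[v^{-1}]$, invoked as a ``standard degree estimate''. No such estimate holds. Since every monomial $m^{(A)}$ is bar-invariant, your assertion would force each distinguished monomial to be a canonical basis element (by the uniqueness argument underlying Lemma~\ref{construction of cano basis for affine sln}), i.e.\ every distinguished monomial would be tight --- which the paper itself refutes: for $n=2$ and $A_1$ with $a>b$, the distinguished monomial $m^{(A_1)}=E_1^{(a+b+c)}E_2^{(b+c)}E_1^{(c)}$ is not tight (Lemma~\ref{iff condition of tightness for 3 terms}), and concretely the coefficient of $\wit{u}_{A_1^{(c-1)}}$ in $m^{(A_1)}$ is $v^{-a-b-1}(1+v^2+\cdots+v^{2a})$, whose top term $v^{a-b-1}$ has nonnegative degree; moreover $A_1^{(c-1)}$ is aperiodic and $\prec A_1$, so exactly such coefficients $f_{B,A}$ occur in your inductive step multiplying the $\eta^C_B$. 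Hence both your base-case justification and your induction fail as stated: membership of $\eta^C_A$ in $v^{-1}\bbz[v^{-1}]$ comes from cancellation among terms that individually may have nonnegative degree, not from termwise negativity, and a closure-under-arithmetic argument cannot detect this. This integrality is precisely the hard content of the Deng--Du--Xiao theorem and requires a genuinely different input; as written, your proof assumes what is to be proved.
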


First of all, we give another construction of $E_A$ and show the uniqueness. For $A\in\ttz_{\vartri}^{ap}(n)_{\bf d},~{\bf d}\in\bbn^n_\vartri$, define
$$\sce_A=m^{(A)}-\sum_{\stackrel{B\prec A}{B\in\ttz_{\vartri}^{ap}(n)}}f_{B,A}m^{(B)}$$
for suitable $f_{B,A}\in\cz$ such that $\sce_A=\wit{u}_A+\sum_{\stackrel{D\prec A}{D\in\ttz_{\vartri}^{p}(n)}}g_{D,A}\wit{u}_D$ with $g_{D,A}\in\cz$.

\begin{lem}[{\cite[Lem 7.1]{DengDuXiao2007generic}}]\label{decomp hall alg as v.s}
  Let $\bf P$ be the subspace of $\bm\fkH_\vartri(n)$ spanned by all $u_A$ with $A\in\ttz_{\vartri}^{p}(n)$. Then as a vector
  space $\bm\fkH_\vartri(n)={\bf P}\oplus\U^+$.
\end{lem}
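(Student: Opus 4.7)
The plan is to produce a single basis of $\boldsymbol{\mathfrak{H}}_\vartri(n)$ that splits visibly into a basis of $\mathbf{P}$ and a basis of $\mathbf{U}^+$. Fixing a distinguished section $\bbd = \{w_A \mid A \in \ttz_\vartri^+(n)\}$, I consider
\[
\cb = \{m^{(A)} \mid A \in \ttz_\vartri^{ap}(n)\} \cup \{u_A \mid A \in \ttz_\vartri^{p}(n)\}.
\]
By the basis statement of the previous theorem, the first piece is a $\cz$-basis (hence after base change a $\bbq(v)$-basis) of $\boldsymbol{\mathfrak{C}}_\vartri(n) \cong \mathbf{U}^+$, and the second piece is by definition a basis of $\mathbf{P}$. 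Therefore the lemma is proved as soon as I show that $\cb$ is a $\bbq(v)$-basis of $\boldsymbol{\mathfrak{H}}_\vartri(n)$, because the two pieces will then span complementary subspaces.

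To check this, I exploit the $\bbn^n$-grading and work in each homogeneous component $\boldsymbol{\mathfrak{H}}_\vartri(n)_{\mathbf d}$ separately. There the set
\[
\cb_{\mathbf d} = \{m^{(A)} \mid A \in \ttz_\vartri^{ap}(n)_{\mathbf d}\} \cup \{u_A \mid A \in \ttz_\vartri^{p}(n)_{\mathbf d}\}
\]
has the same (finite) cardinality as the standard basis $\{u_A \mid A \in \ttz_\vartri^+(n)_{\mathbf d}\}$, since $\ttz_\vartri^+(n)_{\mathbf d} = \ttz_\vartri^{ap}(n)_{\mathbf d} \sqcup \ttz_\vartri^{p}(n)_{\mathbf d}$. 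Hence it suffices to check linear independence.

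The key input is the triangular expansion from the PBW basis theorem: for $A \in \ttz_\vartri^{ap}(n)_{\mathbf d}$,
\[
m^{(A)} = \wit u_A + \sum_{\substack{T \prec A \\ T \in \ttz_\vartri^+(n)_{\mathbf d}}} v^{\dz(A)-\dz(T)} \gz^T_A(v^2)\, \wit u_T = v^{\dz(A)} u_A + \sum_{T \prec A} v^{\dz(A)} \gz^T_A(v^2)\, u_T,
\]
where $v^{\dz(A)} \neq 0$. Choosing any linear refinement of $\preccurlyeq$ on $\ttz_\vartri^+(n)_{\mathbf d}$ and ordering both $\cb_{\mathbf d}$ and the standard basis accordingly, the transition matrix is upper triangular: the periodic columns contribute diagonal $1$, and the aperiodic columns contribute diagonal $v^{\dz(A)}$. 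All diagonal entries are nonzero, so the matrix is invertible over $\bbq(v)$ and $\cb_{\mathbf d}$ is a basis. Assembling over all $\mathbf d$ gives $\boldsymbol{\mathfrak{H}}_\vartri(n) = \mathbf{P} \oplus \mathbf{U}^+$.

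There is no real obstacle here; the whole argument is an exercise in bookkeeping, and the only subtlety is noticing that the terms appearing in the triangular expansion of $m^{(A)}$ all share the dimension vector of $A$, so the partial order $\preccurlyeq$ is genuinely a partial order on a \emph{finite} set in each graded piece and admits a linear refinement. Everything else reduces to the already-established PBW theorem and the definition of $\mathbf{P}$.
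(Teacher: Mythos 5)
Your argument is correct. Note that the paper itself offers no proof of this lemma: it is quoted directly from \cite[Lem 7.1]{DengDuXiao2007generic}, so there is nothing in-text to compare against. What you give is essentially the standard argument underlying that cited result: fix a distinguished section, work degree by degree in the finite graded pieces $\bm\fkH_\vartri(n)_{\bf d}$, and observe that the triangular relation \eqref{triangluarrelation} makes the transition matrix from $\{u_T\mid T\in\ttz_\vartri^+(n)_{\bf d}\}$ to $\{m^{(A)}\mid A\in\ttz_\vartri^{ap}(n)_{\bf d}\}\cup\{u_C\mid C\in\ttz_\vartri^{p}(n)_{\bf d}\}$ triangular (along any linear refinement of $\preccurlyeq$) with invertible diagonal entries $v^{\dz(A)}$ and $1$, whence this union is a $\bbq(v)$-basis and the two spans are complementary. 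You correctly handled the only delicate points: that all terms in the expansion of $m^{(A)}$ have the same dimension vector, so each graded piece is finite and $\preccurlyeq$ restricts there to a genuine partial order, and that $\{m^{(A)}\mid A\in\ttz_\vartri^{ap}(n)\}$ is a basis of $\U^+$ by the monomial basis theorem already quoted in the paper. So the proof is complete and self-contained relative to the results stated earlier in the paper.
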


Now the uniqueness of PBW basis $E_A$ is given as follows.

\begin{lem}
  $\sce_A=E_A$. Thus, the construction of $E_A$ is unique respect to $m^{(A)}$ and $\preccurlyeq$.
\end{lem}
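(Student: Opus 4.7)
The plan is to exploit the direct sum decomposition $\bm\fkH_\vartri(n) = \mathbf{P} \oplus \U^+$ supplied by Lemma~\ref{decomp hall alg as v.s}. I would show that $E_A$ and $\sce_A$ both belong to $\U^+$, while at the same time their difference $E_A - \sce_A$ expands as a $\bbq(v)$-linear combination of $\wit{u}_C$ with $C$ periodic and hence lies in $\mathbf{P}$. The decomposition then forces the difference to vanish, which simultaneously gives the uniqueness of $\sce_A$ and identifies it with $E_A$.

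Concretely, I would first verify $E_A \in \U^+$ by induction along $\preccurlyeq$: the base case where $A$ is minimal in $\ttz_\vartri^{ap}(n)_{\bf d}$ gives $E_A = m^{(A)}$, a monomial in the divided powers $u_i^{(m)}$ and so an element of $\fkC_\vartri(n) \cong \U^+$ by Theorem~\ref{isomorphism theorem for composition algebra}; the recursive formula $E_A = m^{(A)} - \sum_{B\prec A,\,B\in\ttz_\vartri^{ap}(n)} v^{\dz(A)-\dz(B)}\gz^B_A(v^2)\,E_B$ then preserves membership in $\U^+$. Membership $\sce_A \in \U^+$ is immediate from its definition as a $\cz$-linear combination of monomials $m^{(B)}$ indexed by aperiodic $B$. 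Combining the PBW expansion (\ref{formula of E_A}) for $E_A$ with the defining property of $\sce_A$ yields
\begin{equation*}
  E_A - \sce_A \;=\; \sum_{C\in\ttz_\vartri^{p}(n),\,C\prec A}\bigl(\eta^C_A - g_{C,A}\bigr)\,\wit{u}_C \;\in\; \mathbf{P},
\end{equation*}
while the left-hand side sits in $\U^+$. Lemma~\ref{decomp hall alg as v.s} therefore forces $E_A - \sce_A = 0$.

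The one point requiring care, which I regard as the main (minor) obstacle, is well-definedness of $\sce_A$: the definition only prescribes $\sce_A$ once suitable coefficients $f_{B,A}\in\cz$ exist that realise the required $\wit{u}$-shape. I would supply existence by observing that iterating the recursive definition of $E_A$ produces an explicit expansion $E_A = m^{(A)} - \sum_{B\in\ttz_\vartri^{ap}(n),\,B\prec A} c_{B,A}\,m^{(B)}$ with $c_{B,A}\in\cz$; since $E_A$ already has the required form against the $\wit{u}$-basis by (\ref{formula of E_A}), the coefficients $c_{B,A}$ constitute an admissible choice of $f_{B,A}$. The argument of the previous paragraph then simultaneously establishes uniqueness of such $f_{B,A}$ (hence of $\sce_A$) and the identification $\sce_A = E_A$.
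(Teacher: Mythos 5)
Your proof is correct and follows essentially the same route as the paper: both expand the difference $\sce_A-E_A$ as an element lying simultaneously in $\U^+$ and in $\mathbf{P}$, and invoke Lemma~\ref{decomp hall alg as v.s} to conclude it vanishes. Your additional checks (that $E_A\in\U^+$ and that admissible coefficients $f_{B,A}$ exist) merely make explicit what the paper leaves implicit.
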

\begin{proof}
  $\sce_A-E_A=\sum\limits_{\stackrel{B'\prec A}{B'\in\ttz_{\vartri}^{p}(n)}}f_{B',A}\wit{u}_{B'}\in \U^+\cap {\bf P}=0$ by Lemma \ref{decomp hall alg as v.s}.
\end{proof}

\begin{thm}\label{recursive formula of coeff of E_A}For $A\in\ttz_{\vartri}^{ap}(n)_{\bf d},~{\bf d}\in\bbn^n_\vartri$, fix a set $$\Pi_{\bf d}^{\prec A}=\{B\in\ttz_{\vartri}^+(n)_{\bf d}\mid B\prec A\}=\{\underbrace{B_1,\cdots,B_k}_{\in\ttz_\vartri^{ap}(n)},\underbrace{C_1,\cdots,C_t}_{\in\ttz_\vartri^{p}(n)}\},$$ then the coefficient $\eta^{C}_A$ of $E_A$ in \eqref{formula of E_A}
has a recursive formula as
    \begin{equation*}
   \eta^{C_i}_A=v^{\dz(A)-\dz(C_i)}\gz^{C_i}_A(v^2)-\sum_{j=1}^kv^{\dz(A)-\dz(B_j)}\gz^{B_j}_A(v^2)\eta^{C_i}_{B_j}.
    \end{equation*}
    Here we understand $\eta^{C_i}_{B_j}=0$ if $C_i,B_j$ are not comparable. Thus, if $\eta^{C_i}_{B_j}\neq0$, then $C_i\prec B_j$.
\end{thm}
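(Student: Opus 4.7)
The plan is to prove this by comparing two different expansions of $m^{(A)}$ in the basis $\{\wit{u}_T\}$ and reading off the coefficient of $\wit{u}_{C_i}$.

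First, I start from the definition of the PBW element, which rewrites $m^{(A)}$ in terms of the aperiodic elements $E_B$:
\begin{equation*}
m^{(A)}=E_A+\sum_{j=1}^{k}v^{\dz(A)-\dz(B_j)}\gz^{B_j}_A(v^2)\,E_{B_j}.
\end{equation*}
Next, I substitute the expansion \eqref{formula of E_A} applied to both $E_A$ and each $E_{B_j}$, using that the expansion of $E_{B_j}$ only involves $\wit{u}_{B_j}$ plus $\wit{u}_C$ with $C\in\ttz_\vartri^{p}(n)$ and $C\prec B_j$. By transitivity of $\prec$ and the fact that all terms appearing lie in the dimension-vector block $\bf d$, every such $C$ belongs to $\Pi_{\bf d}^{\prec A}$, so it must be one of the $C_i$. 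Substituting gives
\begin{equation*}
m^{(A)}=\wit{u}_A+\sum_{i=1}^{t}\eta^{C_i}_A\,\wit{u}_{C_i}
+\sum_{j=1}^{k}v^{\dz(A)-\dz(B_j)}\gz^{B_j}_A(v^2)\Bigl(\wit{u}_{B_j}+\sum_{i=1}^{t}\eta^{C_i}_{B_j}\,\wit{u}_{C_i}\Bigr),
\end{equation*}
with the convention $\eta^{C_i}_{B_j}=0$ when $C_i\not\prec B_j$ (so the corresponding term is absent).

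On the other hand, the triangular relation \eqref{triangluarrelation} expresses $m^{(A)}$ directly in the $\wit{u}$-basis:
\begin{equation*}
m^{(A)}=\wit{u}_A+\sum_{j=1}^{k}v^{\dz(A)-\dz(B_j)}\gz^{B_j}_A(v^2)\,\wit{u}_{B_j}
+\sum_{i=1}^{t}v^{\dz(A)-\dz(C_i)}\gz^{C_i}_A(v^2)\,\wit{u}_{C_i}.
\end{equation*}
Since $\{\wit{u}_T\mid T\in\ttz_\vartri^+(n)\}$ is a $\cz$-basis of $\fkH_\vartri(n)$, the $\wit{u}_A$ and $\wit{u}_{B_j}$ coefficients already match on the two sides. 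Equating the coefficient of $\wit{u}_{C_i}$ on each side yields
\begin{equation*}
\eta^{C_i}_A+\sum_{j=1}^{k}v^{\dz(A)-\dz(B_j)}\gz^{B_j}_A(v^2)\,\eta^{C_i}_{B_j}
=v^{\dz(A)-\dz(C_i)}\gz^{C_i}_A(v^2),
\end{equation*}
which upon rearranging is exactly the desired recursion. The final statement that $\eta^{C_i}_{B_j}\neq 0$ forces $C_i\prec B_j$ is then immediate from \eqref{formula of E_A} applied to $B_j$: only indices strictly less than $B_j$ under $\preccurlyeq$ appear in its expansion.

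There is essentially no serious obstacle here; the argument is a straightforward coefficient comparison. The only point that needs mild care is verifying that all periodic terms arising from substituting the $E_{B_j}$'s stay inside the fixed set $\Pi_{\bf d}^{\prec A}$, which follows from transitivity of $\prec$ together with homogeneity in the dimension vector $\bf d$. The essential input making the comparison work is the linear independence of $\{\wit{u}_T\}$, guaranteed by the PBW-type basis statement already recorded in the excerpt.
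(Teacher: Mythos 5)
Your proof is correct and is essentially the same as the paper's: both expand $m^{(A)}$ once via the triangular relation \eqref{triangluarrelation} and once via $m^{(A)}=E_A+\sum_j f_{B_j,A}E_{B_j}$ with each $E$-term rewritten in the $\wit{u}$-basis using \eqref{formula of E_A}, and then compare the coefficients of $\wit{u}_{C_i}$. The only cosmetic difference is that you make explicit the (routine) check that the periodic terms from the $E_{B_j}$'s stay in $\Pi_{\bf d}^{\prec A}$, which the paper handles implicitly with the convention $\eta^{C_i}_{B_j}=0$ for incomparable pairs.
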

\begin{proof}By the triangular decomposition in \eqref{triangluarrelation}, we have
  \begin{equation*}
    m^{(A)}=\wit{u}_A+\sum_{i=1}^kv^{\dz(A)-\dz(B_i)}\gz^{B_i}_A(v^2)\wit{u}_{B_i}+\sum_{j=1}^tv^{\dz(A)-\dz(C_j)}\gz^{C_j}_A(v^2)\wit{u}_{C_j}.
  \end{equation*}

On the other hand, express $m^{(A)}$ as
\begin{equation*}
  \begin{split}
    m^{(A)}&=E_A+\sum_{B_i\prec A}f_{B_i,A}E_{B_i}\quad (\text{Here}~f_{B_i,A}=v^{\dz(A)-\dz(B_i)}\gz^{B_i}_A(v^2))\\
    &=(\wit{u}_A,\wit{u}_{C_1},\cdots,\wit{u}_{C_t})\begin{pmatrix}
      1\\ \eta^{C_1}_A\\\eta^{C_2}_A\\ \vdots \\\eta^{C_t}_A
    \end{pmatrix}+\sum_{B_i\prec A}f_{B_i,A}(\wit{u}_{B_i},\wit{u}_{C_1},\cdots,\wit{u}_{C_t})\begin{pmatrix}
      1\\ \eta^{C_1}_{B_i}\\\eta^{C_2}_{B_i}\\ \vdots \\\eta^{C_t}_{B_i}\end{pmatrix}\\
      &=\wit{u}_A+f_{B_1,A}\wit{u}_{B_1}+f_{B_2,A}\wit{u}_{B_2}+\cdots+f_{B_k,A}\wit{u}_{B_k}\\
      &\quad+(\eta^{C_1}_A+f_{B_1,A}\eta^{C_1}_{B_1}+\cdots+f_{B_k,A}\eta^{C_1}_{B_k})\wit{u}_{C_1}\\
      &\quad+(\eta^{C_2}_A+f_{B_2,A}\eta^{C_2}_{B_1}+\cdots+f_{B_k,A}\eta^{C_2}_{B_k})\wit{u}_{C_2}\\
      &\quad+\quad\cdots\cdots\cdots\\
      &\quad+(\eta^{C_t}_A+f_{B_1,A}\eta^{C_t}_{B_1}+\cdots+f_{B_k,A}\eta^{C_t}_{B_k})\wit{u}_{C_t},\\
  \end{split}
\end{equation*}

Comparing the coefficients on both expression, the result follows.
\end{proof}

Take the same example as in \cite{DengDuXiao2007generic}.

\begin{exam}\label{example 1}
  Let $n=3$ and ${\bf d}=(1,2,3)$. Then $\ttz_{\vartri}^+(3)_{\bf d}$ consists of 18 elements, \ie there are 18 isoclasses of nilpotent representations of $\ddz(3)$ of dimension vector $\bf d$. The Hasse diagram of $(\ttz_{\vartri}^+(3)_{\bf d},\preccurlyeq)$ has the form
 $$\xymatrix{
 & &&A_1\ar@{-}[dll]\ar@{-}[d]\ar@{-}[drr]& &\\
 &A_{10}\ar@{-}[dl]& & A_4\ar@{-}[dlll]\ar@{-}[dl]\ar@{-}[dr] & & A_3\ar@{-}[dr] \\
 A_{14}\ar@{-}[dd]&& A_9\ar@{-}[ddll]\ar@{-}[dr]& & A_2\ar@{-}[dl]\ar@{-}[d]\ar@{-}[dr] && A_7\ar@{-}[dd]\\
&&& A_6\ar@{-}[d]  & A_8\ar@{-}[dl] &  A_5\ar@{-}[ddl]\\
 A_{13}\ar@{-}[drr] && &A_{12}\ar@{-}[dl]\ar@{-}[dr]&& &A_{11}\ar@{-}[dllll]\ar@{-}[dl]&\\
 & &A_{15}\ar@{-}[dr]&& A_{16}\ar@{-}[dl] & A_{17}\ar@{-}[dll]\\
  &&& A_{18}
 }$$

where the cores of those matrices are given by

\begin{align*}
      &A_1=\begin{psmallmatrix}
      \bf0 & 0 & 0 & 0 & 0 & 0 & 0\\
      0 & \bf0 & 0 & 0 & 0 & 0 & 1\\
      0 & 0 & \bf0 & 1 & 0 & 0 & 0\\
      \end{psmallmatrix},~
      A_2=\begin{psmallmatrix}
      \bf0 & 0 & 0 & 0 & 0 & 0 & 0\\
      0 & \bf0 & 1 & 0 & 0 & 0 & 0\\
      0 & 0 & \bf0 & 1 & 0 & 0 & 1\\
      \end{psmallmatrix},~
      A_3=\begin{psmallmatrix}
      \bf0 & 0 & 0 & 0 & 0 & 0 & 0\\
      0 & \bf0 & 0 & 0 & 0 & 0 & 0\\
      0 & 0 &\bf 0 & 2 & 0 & 0 & 1\\
      \end{psmallmatrix},~
      A_4=\begin{psmallmatrix}
      \bf0 & 0 & 0 & 0 & 0 & 0 & 0\\
      0 & \bf0 & 0 & 1 & 0 & 0 & 0\\
      0 & 0 & \bf0 & 0 & 0 & 0 & 1\\
      \end{psmallmatrix},\\
      &A_5=\begin{psmallmatrix}
      \bf0 & 0 & 0 & 0 & 0 & 0\\
      0 & \bf0 & 1 & 0 & 0 & 0\\
      0 & 0 & \bf0 & 2 & 0 & 1\\
      \end{psmallmatrix},~
      A_6=\begin{psmallmatrix}
      \bf0 & 0 & 0 & 1\\
      0 & \bf0 & 1 & 0\\
      0 & 0 & \bf0 & 2 \\
      \end{psmallmatrix},~
      A_7=\begin{psmallmatrix}
      \bf0 & 0 & 0 & 0 & 0 & 0 \\
      0 & \bf0 & 1 & 0 & 1 & 0 \\
      0 & 0 &\bf 0 & 2 & 0 & 1\\
      \end{psmallmatrix},~
      A_8=\begin{psmallmatrix}
      \bf0 & 0 & 0 & 0 & 0 & 0 \\
      0 & \bf0 & 0 & 1 & 0 & 0 \\
      0 & 0 &\bf 0 & 1 & 0 & 1\\
      \end{psmallmatrix},\\
      &A_9=\begin{psmallmatrix}
      \bf0 & 0 & 0 & 1 \\
      0 &\bf 0 & 0 & 1\\
      0 & 0 & \bf0 & 1\\
      \end{psmallmatrix},~
      A_{10}=\begin{psmallmatrix}
      \bf0 & 0 & 0 & 0 & 0 \\
      0 & \bf0 & 0 & 1 & 1 \\
      0 & 0 &\bf 0 & 1 & 0 \\
      \end{psmallmatrix},~
      A_{11}=\begin{psmallmatrix}
      \bf0 & 0 & 0 & 0 & 0 \\
      0 & \bf0 & 1 & 1 & 0 \\
      0 & 0 & \bf0 & 1 & 1\\
      \end{psmallmatrix},~
      A_{12}=\begin{psmallmatrix}
      \bf0 & 0 & 1 & 0 \\
      0 & \bf0 & 0 & 1 \\
      0 & 0 &\bf 0 & 2 \\
      \end{psmallmatrix},\\
      &A_{13}=\begin{psmallmatrix}
      \bf0 & 1 & 0 & 0 \\
      0 & \bf0 & 0 & 2 \\
      0 & 0 &\bf 0 & 1 \\
      \end{psmallmatrix},~
      A_{14}=\begin{psmallmatrix}
      \bf0 & 0 & 0 & 0 & 0 \\
      0 & \bf0 & 0 & 2 & 0 \\
      0 & 0 &\bf 0 & 0 & 1 \\
      \end{psmallmatrix},~
      A_{15}=\begin{psmallmatrix}
      \bf0 & 1 & 0 & 0 \\
      0 & \bf0 & 1 & 1 \\
      0 & 0 & \bf0 & 2 \\
      \end{psmallmatrix},~
      A_{16}=\begin{psmallmatrix}
      \bf0 & 0 & 1 & 0\\
      0 & \bf0 & 1 & 0\\
      0 & 0 & \bf0 & 3\\
      \end{psmallmatrix},\\
      &A_{17}=\begin{psmallmatrix}
      \bf0 & 0 & 0 & 0 & 0 \\
      0 & \bf0 & 2 & 0 & 0 \\
      0 & 0 & \bf0 & 2 & 1 \\
      \end{psmallmatrix},~
      A_{18}=\begin{psmallmatrix}
      \bf0 & 1 & 0 & 0 \\
      0 & \bf0 & 2 & 0 \\
      0 & 0 & \bf0 & 3 \\
      \end{psmallmatrix}.
\end{align*}

Easy to see, all are aperiodic except $A_{15},A_{18}$.

For $\Pi_{\bf d}^{\preccurlyeq A_6}=\{A_6,A_{12},A_{16},A_{15},A_{18}\}$, using the matrix algorithm in \cite{DuZhaomultiplication},
we take the following distinguished words $123^32\in\wp^{-1}(A_6),213^32\in\wp^{-1}(A_{12})$, $13^32^2\in\wp^{-1}(A_{16})$.

By easy calculation, $\dz(A_6)=2,\dz(A_{12})=3,\dz(A_{16})=6,\dz(A_{15})=4,\dz(A_{18})=8$ and
 $\gz^{A_{18}}_{A_{16}}(v^2)=\gz^{A_{16}}_{A_{12}}(v^2)=\gz^{A_{15}}_{A_{12}}(v^2)=\gz^{A_{15}}_{A_{6}}(v^2)=\gz^{A_{12}}_{A_{6}}(v^2)=1$, $\gz^{A_{18}}_{A_{12}}(v^2)=\gz^{A_{18}}_{A_6}(v^2)=\gz^{A_{16}}_{A_6}(v^2)=1+v^2$.

 Since $A_{16}$ is minimal, $\eta^{A_{18}}_{A_{16}}=v^{\dz(A_{16})-\dz(A_{18})}\gz^{A_{18}}_{A_{16}}(v^2)=v^{-2}$.

 $A_{12}$ is minimal in $\Pi_{\bf d}^{\preccurlyeq A_6}\setminus\{A_{16}\}$, by Theorem \ref{recursive formula of coeff of E_A}, we have
 \begin{equation*}
   \begin{split}
     \eta^{A_{18}}_{A_{12}}&=v^{\dz(A_{12})-\dz(A_{18})}\gz^{A_{18}}_{A_{12}}(v^2)
     -v^{\dz(A_{12})-\dz(A_{16})}\gz^{A_{16}}_{A_{12}}(v^2)\eta^{A_{18}}_{A_{16}}\\
     &=v^{3-8}(1+v^2)-v^{3-6}\cdot 1\cdot v^{-2}=v^{-3},\\
     \eta^{A_{15}}_{A_{12}}&=v^{\dz(A_{12})-\dz(A_{15})}\gz^{A_{15}}_{A_{12}}(v^2)
     -v^{\dz(A_{16})-\dz(A_{15})}\gz^{A_{15}}_{A_{16}}(v^2)\eta^{A_{15}}_{A_{16}}\qquad~(A_{15},A_{16}~\text{are not comparable})\\
     &=v^{-1}-0=v^{-1},\\
     \eta^{A_{18}}_{A_{6}}&=v^{\dz(A_{6})-\dz(A_{18})}\gz^{A_{18}}_{A_{6}}(v^2)
     -v^{\dz(A_{6})-\dz(A_{12})}\gz^{A_{12}}_{A_{6}}(v^2)\eta^{A_{18}}_{A_{12}}-v^{\dz(A_{6})
     -\dz(A_{16})}\gz^{A_{16}}_{A_{6}}(v^2)\eta^{A_{18}}_{A_{16}}\\
     &=v^{2-8}(1+v^2)-v^{2-3}\cdot1\cdot v^{-3}-v^{2-6}(1+v^2)v^{-2}=-v^{-4},\\
     \eta^{A_{15}}_{A_{6}}&=v^{\dz(A_{6})-\dz(A_{15})}\gz^{A_{15}}_{A_{6}}(v^2)
     -v^{\dz(A_{6})-\dz(A_{12})}\gz^{A_{12}}_{A_{6}}(v^2)\eta^{A_{15}}_{A_{12}}-v^{\dz(A_{6})
     -\dz(A_{16})}\gz^{A_{16}}_{A_{6}}(v^2)\eta^{A_{15}}_{A_{16}}\\
     &=v^{2-4}\cdot 1-v^{2-3}\cdot1\cdot v^{-1}-0=0.
   \end{split}
 \end{equation*}

 Finally, we obtain
 \begin{equation*}
   \begin{split}
     E_{A_6}&=\wit{u}_{A_6}-v^{-4}\wit{u}_{A_{18}},\\
     E_{A_{12}}&=\wit{u}_{A_{12}}+v^{-1}\wit{u}_{A_{15}}+v^{-3}\wit{u}_{A_{18}},\\
     E_{A_{16}}&=\wit{u}_{A_{16}}+v^{-2}\wit{u}_{A_{18}}.
   \end{split}
 \end{equation*}
\end{exam}

\section{Another construction of canonical bases for quantum affine $\fks\fkl_n$}

Keep the setting as Section \ref{1}. With the PBW basis $\{E_A\mid A\in\ttz_\vartri^{ap}(n)\}$, the canonical basis of $U_\cz^+$ is constructed by elementary linear algebra method.

The bar involution \cite[Prop 7.5]{VaragnoloVasserot1999decomposition} given by $^-:\fkH_\vartri(n)\ra \fkH_\vartri(n),m^{(A)}\mapsto m^{(A)}$ and $v\mapsto v^{-1}$ can be restricted to $U_\cz^+$. By restricting to $\ttz_\vartri^{ap}(n)_{\bf d},~{\bf d}\in\bbn^n_\vartri$, since
$$m^{(A)}=E_A+\sum_{B\in\ttz_\vartri^{ap}(n)_{\bf d},B\prec A}f_{B,A}E_B,~\text{where}~f_{B,A}=v^{\dz(A)-\dz(B)}\gz^B_{A}(v^2),$$
solving $E_A$ as

$$E_A=m^{(A)}+\sum_{B\in\ttz_\vartri^{ap}(n)_{\bf d},B\prec A}g_{B,A}m^{(B)}.$$

Applying bar involution $^-$, we obtain
$$\ol{E_A}=m^{(A)}+\sum_{B\in\ttz_\vartri^{ap}(n)_{\bf d},B\prec A}\ol{g_{B,A}}m^{(B)}=E_A+\sum_{B\in\ttz_\vartri^{ap}(n)_{\bf d},B\prec A}r_{B,A}E_B$$

By \cite[7.10]{Lusztig1990canonical} (or \cite[\S0.5]{DengDuParashallWang2008finite},\cite{Du1994ic}), the system
$$p_{B,A}=\sum_{B\preccurlyeq C\preccurlyeq A}r_{B,C}\ol{p_{C,A}}\quad \text{for}~B\preccurlyeq A, A,B\in\ttz_\vartri^{ap}(n)_{\bf d}$$
has a unique solution satisfying $p_{A,A}=1,p_{B,A}\in v^{-1}\bbz[v^{-1}]$ for $B\prec A$. Moreover, the elements
$$\mathbf{C}_A=\sum_{B\preccurlyeq A,B\in \ttz_\vartri^{ap}(n)}p_{B,A}E_B,\quad A\in\ttz_\vartri^{ap}(n)_{\bf d}$$
is a $\cz$-basis of $U_{\bf d}^+$. Thus, $$\scc(\wih{\fks\fkl}_n)=\{\mathbf{C}_A\mid A\in\ttz_\vartri^{ap}(n)\}$$ is
the canonical basis of $U_\cz^+$ with respect to the PBW basis $\{E_A\}_{A\in\ttz_\vartri^{ap}(n)}$.

Using the triangular relation in \eqref{triangluarrelation}, the canonical basis $\{\mathbf{c}_A\mid A\in\ttz_\vartri^+(n)\}$ (with respect to the defining basis $\wit{u}_A$) of $\fkH_\vartri(n)$ can be constructed by the same way, for detail, see \cite{DuZhaomultiplication}.

\begin{thm}[{\cite[Thm 9.2]{DengDuXiao2007generic}}]\label{compareofcanobasis}
  $\mathbf{c}_A$ is the same as the canonical basis defined by geometric method by Lusztig in \cite{Lusztig1992affine}. In particular, when restricted to $\ttz_\vartri^{ap}(n)$, we have $\{\mathbf{c}_A\mid A\in \ttz_{\vartri}^{ap}(n)\}=\scc(\wih{\fks\fkl}_n)$.
\end{thm}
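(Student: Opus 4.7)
The proof splits into two halves driven by the same uniqueness principle: a bar-invariant element of $\fkH_\vartri(n)$ with leading coefficient $1$ at $\wit{u}_A$ and all other coefficients (in the $\wit{u}_B$-basis, with respect to $\preccurlyeq$ on each $\ttz_\vartri^+(n)_{\bf d}$) lying in $v^{-1}\bbz[v^{-1}]$ is unique. The two statements to establish are then (i) the agreement with Lusztig's geometric basis for the whole $\fkH_\vartri(n)$, and (ii) the stability on aperiodic matrices, $\mathbf{c}_A=\mathbf{C}_A$.

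For (ii), I would fix $\mathbf{d}\in\bbn^n_\vartri$ and $A\in\ttz_\vartri^{ap}(n)_{\bf d}$. By construction
\[
  \mathbf{C}_A = E_A + \sum_{\substack{B\in\ttz_\vartri^{ap}(n)_{\bf d}\\ B\prec A}} p_{B,A}\,E_B,\qquad p_{B,A}\in v^{-1}\bbz[v^{-1}],\qquad \overline{\mathbf{C}_A}=\mathbf{C}_A.
\]
Substituting the PBW expansion \eqref{formula of E_A}, $E_B=\wit{u}_B+\sum_{C\in\ttz_\vartri^p(n),\,C\prec B}\eta^C_B\wit{u}_C$, and collecting, the coefficient of $\wit{u}_A$ equals $1$; the coefficient of $\wit{u}_B$ for an aperiodic $B\prec A$ equals $p_{B,A}$; and the coefficient of $\wit{u}_D$ for a periodic $D\prec A$ equals $\eta^D_A+\sum_{B} p_{B,A}\eta^D_B$. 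Each off-diagonal coefficient is a $\bbz$-linear combination of products of elements of $v^{-1}\bbz[v^{-1}]$, hence itself lies in $v^{-1}\bbz[v^{-1}]$. Since $\mathbf{C}_A$ is bar-invariant, it satisfies exactly the characterizing properties of $\mathbf{c}_A$, and uniqueness gives $\mathbf{C}_A=\mathbf{c}_A$; as a consequence $\mathbf{c}_A\in U_\cz^+$ and $\{\mathbf{c}_A\mid A\in\ttz_\vartri^{ap}(n)\}=\scc(\wih{\fks\fkl}_n)$.

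For (i), I would invoke Lusztig's own characterization of the geometric canonical basis $\mathbf{B}\subset\fkH_\vartri(n)$ from \cite{Lusztig1992affine}: $\mathbf{B}$ is the unique bar-invariant $\cz$-basis whose transition matrix to the natural ``characteristic-function'' basis is unipotent, with off-diagonal entries in $v^{-1}\bbz[v^{-1}]$, the ordering being the closure order on orbits parametrizing nilpotent representations of $\ddz(n)$. Under the Hall-algebra/geometry dictionary, the characteristic function of the orbit of $M(A)$, after the normalization built into $\wit{u}_A=v^{\dz(A)}u_A$, corresponds to $\wit{u}_A$; and by \eqref{equivalent condition for degenerate order} the closure order coincides with $\preccurlyeq$ on each $\ttz_\vartri^+(n)_{\bf d}$. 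Then $\mathbf{c}_A$ and the corresponding element of $\mathbf{B}$ both satisfy the same characterization, so they agree.

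The main obstacle is the geometric/algebraic dictionary invoked in the last paragraph: one must verify that the shift $v^{\dz(A)}$ in $\wit{u}_A$ is exactly the cohomological shift turning Lusztig's geometric standard object on the orbit $\co_A$ into the class of $\wit{u}_A$, and that the closure order on the representation varieties of $\ddz(n)$ coincides with the degeneration order $\leqs_{\dg}$. Both facts are standard in the affine type-$A$ literature (and implicit in the parametrization of aperiodic IC sheaves), but they must be cited explicitly; once in place, both parts of the theorem reduce to two applications of the uniqueness principle.
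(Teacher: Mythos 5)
There is no internal proof to compare against: the paper states this theorem as a quotation of \cite[Thm 9.2]{DengDuXiao2007generic} and gives no argument of its own. Judged on its own merits, your proposal has two halves of unequal strength. The second assertion you do prove: writing $\mathbf{C}_A=E_A+\sum_{B\prec A}p_{B,A}E_B$ with $p_{B,A}\in v^{-1}\bbz[v^{-1}]$, substituting the expansion \eqref{formula of E_A}, and noting that the aperiodic $\wit{u}_B$-coefficients are exactly the $p_{B,A}$ while the periodic $\wit{u}_D$-coefficients are $\eta^D_A+\sum_B p_{B,A}\eta^D_B\in v^{-1}\bbz[v^{-1}]$, you exhibit $\mathbf{C}_A$ as a bar-invariant element equal to $\wit{u}_A$ plus a $v^{-1}\bbz[v^{-1}]$-combination of lower terms; uniqueness (which uses that the bar involution is unitriangular on the $\wit{u}$-basis with respect to $\preccurlyeq$, a fact available from \eqref{triangluarrelation}) then gives $\mathbf{C}_A=\mathbf{c}_A$ and hence $\{\mathbf{c}_A\mid A\in\ttz_\vartri^{ap}(n)\}=\scc(\wih{\fks\fkl}_n)$. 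This is correct, purely algebraic, complete given the results already stated in the paper, and is in substance the Deng--Du--Xiao comparison of the two algebraic constructions.

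The first assertion, agreement of $\mathbf{c}_A$ with Lusztig's geometric basis, is where the real content of the quoted theorem sits, and there your argument is only a reduction, not a proof. The uniqueness principle does apply once one knows three geometric inputs: (a) that the bar involution of $\fkH_\vartri(n)$ corresponds to Verdier duality under the sheaf--function dictionary, so that the classes of the IC sheaves of orbit closures are bar-invariant elements of $\fkH_\vartri(n)$; (b) that the normalization $\wit{u}_A=v^{\dz(A)}u_A$ matches the perverse shift, so that the IC expansion is unitriangular with off-diagonal coefficients in $v^{-1}\bbz[v^{-1}]$; and (c) that the orbit-closure order coincides with $\leqs_{\dg}$, hence with $\preccurlyeq$ by \eqref{equivalent condition for degenerate order}. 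You flag (b) and (c) explicitly, but (a) -- the most delicate point, resting on self-duality and purity of the relevant IC sheaves -- is folded silently into the assertion that Lusztig's basis is ``characterized'' by bar-invariance plus triangularity. None of (a)--(c) is verified in your sketch, and verifying them is essentially what \cite{DengDuXiao2007generic} and \cite{Lusztig1992affine} do; so your first half is the right strategy and a fair explanation of why the quoted result holds, but it is not a self-contained proof of it.
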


Next, we review the construction \cite{DuZhaomultiplication} of canonical basis of $\fkH_\vartri(n)$ from $m^{(A)}$.

For $A\in\ttz_\vartri^{+}(n)$, define
$$\ci_{\prec A}=\{B\in\ttz_{\vartri}^+(n)\mid B\prec A\}=\ci_{\prec A}^{1}\cup \ci_{\prec A}^{2}\cup\cdots \ci_{\prec A}^{t}~\text{for some}~t\in\bbn,$$

where $\ci_{\prec A}^{1}=\{\text{maximal elements of}~\ci_{\prec A}\}$ and
$\ci_{\prec A}^{i}=\{\text{maximal elements of}~\ci_{\prec A}\setminus \cup_{j=1}^{i-1}\ci_{\prec A}^j\}$ for $2\leqs i\leqs t$.

For a fixed $A\in\ttz_{\vartri}^{+}(n)_{\bf d},~{\bf d}\in\bbn^n_\vartri$, since
$$m^{(A)}=\wit{u}_A+\sum_{B\prec A,B\in\ttz_\vartri^+(n)_{\bf d}}h_{B,A}\wit{u}_B,~h_{B,A}\in\bbz[v,v^{-1}],$$
choose $B\in\ttz_{\vartri}^{+}(n)_{\bf d}$ such that $h_{B,A}\notin v^{-1}\bbz[v^{-1}]$ with $B\in \ci_{\prec A}^a$ and $a$ is minimal, then
$h_{B',A}\in v^{-1}\bbz[v^{-1}]$ if $B\prec B'$, or $B'\in \ci_{\prec A}^i,~i<a$.

Note the fact that every $h_{B,A}$ in $\bbz[v,v^{-1}]$ has a unique decomposition $h_{B,A}=h'_{B,A}+h''_{B,A}$
with $\ol{h'_{B,A}}=h'_{B,A}$ and $h''_{B,A}\in v^{-1}\bbz[v^{-1}]$.

\begin{lem}[{\cite[Algo 5.5]{DuZhaomultiplication}}]\label{another const for hall alg}
For $A\in\ttz_{\vartri}^{+}(n)_{\bf d},~{\bf d}\in\bbn^n_\vartri$, there exists $h'_{B,A}\in\bbz[v,v^{-1}]$ with $\ol{h'_{B,A}}=h'_{B,A}$ such that
  $$m^{(A)}-\sum_{\stackrel{B\in\ci_{\prec A}}{B\in\ttz_{\vartri}^{+}(n)}}h'_{B,A}m^{(B)}$$
  is the canonical basis of $\fkH_\vartri(n)$ via the defining basis $\{\wit{u}_A\}_{A\in\ttz_{\vartri}^{+}(n)}$.
\end{lem}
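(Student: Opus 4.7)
The plan is to build the desired element $c_A := m^{(A)} - \sum_{B\in\ci_{\prec A}} h'_{B,A}\, m^{(B)}$ by an inductive Kazhdan--Lusztig--type procedure which processes the layers $\ci_{\prec A}=\ci^1_{\prec A}\cup\cdots\cup\ci^t_{\prec A}$ from the top down, at each step killing the non-$v^{-1}\bbz[v^{-1}]$ part of the relevant $\wit{u}$-coefficient by subtracting a bar-invariant multiple of $m^{(B)}$. Bar-invariance of the output comes for free once one recalls that every $m^{(C)}$ is a product of the bar-invariant elements $\wit{u}_{e_i{\bm b}_i}$, hence itself bar-invariant; identification of $c_A$ with the canonical basis element $\mathbf{c}_A$ is then forced by the uniqueness part of Lusztig's construction, cf.\ the discussion preceding Theorem \ref{compareofcanobasis}.

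First I would expand
$$m^{(A)}=\wit{u}_A+\sum_{B\prec A,\,B\in\ttz_{\vartri}^+(n)_{\bf d}} h_{B,A}\,\wit{u}_B$$
via the triangular relation \eqref{triangluarrelation}, and use the unique decomposition $h_{B,A}=h'_{B,A}+h''_{B,A}$ with $\ol{h'_{B,A}}=h'_{B,A}$ and $h''_{B,A}\in v^{-1}\bbz[v^{-1}]$. I would then run the obvious recursion: if all current $\wit{u}_B$-coefficients already lie in $v^{-1}\bbz[v^{-1}]$, stop; otherwise locate $B$ with bad coefficient whose layer index $a$ is minimal, replace the current element by itself minus $h'_{B,A}m^{(B)}$, and re-expand. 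Since $m^{(B)}=\wit{u}_B+\sum_{B'\prec B}h_{B',B}\wit{u}_{B'}$, this subtraction moves the $\wit{u}_B$-coefficient from $h_{B,A}$ to $h''_{B,A}\in v^{-1}\bbz[v^{-1}]$ and only modifies coefficients indexed by elements strictly below $B$.

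The key structural observation sustaining the induction is that if $B\in\ci^a_{\prec A}$ and $B'\in\ci_{\prec A}$ with $B'\prec B$, then the layer index of $B'$ is strictly greater than $a$: indeed, for any $j\leqs a$ the element $B$ belongs to $\ci_{\prec A}\setminus\bigcup_{k<j}\ci^k_{\prec A}$, and by the maximality of the elements of $\ci^j_{\prec A}$ in that set, no element of $\ci^j_{\prec A}$ is $\prec B$. Consequently, no already-processed coefficient (either at layer $<a$, or at layer $=a$ processed in an earlier sub-step) is disturbed by the current subtraction, and the invariant \emph{all processed-layer coefficients lie in $v^{-1}\bbz[v^{-1}]$} is preserved. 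Since $\ci_{\prec A}$ is finite, the recursion terminates after finitely many steps, producing bar-invariant $h'_{B,A}\in\bbz[v,v^{-1}]$ of the required form.

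The main obstacle is precisely this layer-compatibility check with $\preccurlyeq$; once it is in hand the rest of the argument is formal. One then concludes by observing that the resulting element is bar-invariant (each $h'_{B,A}$ is bar-invariant by construction and $\ol{m^{(C)}}=m^{(C)}$ for every $C$) and of the form $\wit{u}_A+\sum_{B\prec A}(v^{-1}\bbz[v^{-1}])\,\wit{u}_B$, whence by the uniqueness statement of \cite[7.10]{Lusztig1990canonical} it must coincide with the canonical basis element $\mathbf{c}_A$ of $\fkH_\vartri(n)$.
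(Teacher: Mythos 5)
Your proposal is correct and follows essentially the same route as the paper (and the algorithm of \cite[Algo 5.5]{DuZhaomultiplication} it cites, reproduced in the proof of Lemma \ref{construction of cano basis for affine sln}): expand $m^{(A)}$ by the triangular relation \eqref{triangluarrelation}, peel off the non-bar-invariant parts of the coefficients layer by layer through the filtration $\ci^1_{\prec A},\ci^2_{\prec A},\dots$ by subtracting bar-invariant multiples of the $m^{(B)}$, and conclude by bar-invariance together with the uniqueness in \cite[7.10]{Lusztig1990canonical}. Your explicit check that subtracting $h'_{B,A}m^{(B)}$ for $B\in\ci^a_{\prec A}$ only disturbs coefficients in strictly deeper layers is exactly the point the paper leaves implicit in its ``continuing if necessary'' step, so nothing is missing.
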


If $A$ is restricted to $\ttz_\vartri^{ap}(n)$, we have the following similar result for $U_\cz^+$ directly from $m^{(A)}$, which is very convenient to deal with canonical basis from $\wit{u}_{A}$ instead of $E_A$.

\begin{lem}\label{construction of cano basis for affine sln}
For $A\in\ttz_{\vartri}^{ap}(n)_{\bf d},~{\bf d}\in\bbn^n_\vartri$, there exists $h'_{B,A}\in\bbz[v,v^{-1}]$ with $\ol{h'_{B,A}}=h'_{B,A}$ such that
  $$m^{(A)}-\sum_{\stackrel{B\in\ci_{\prec A}}{B\in\ttz_{\vartri}^{ap}(n)}}h'_{B,A}m^{(B)}=\wit{u}_A
  +\sum_{\stackrel{B\in\ci_{\prec A}}{B\in\ttz_{\vartri}^{ap}(n)}}v^{-1}\bbz[v^{-1}]\wit{u}_B+\sum_{\stackrel{C\in\ci_{\prec A}}{C\in\ttz_{\vartri}^{p}(n)}}\bbz[v,v^{-1}]\wit{u}_C,$$
  then this is the canonical basis of $U_\cz^+$ via the PBW basis $\{E_A\}_{A\in\ttz_\vartri^{ap}(n)}$, and more precisely, the coefficient of $\wit{u}_C$ for those $C\in\ttz_\vartri^{p}(n)$ exactly belongs to $v^{-1}\bbz[v^{-1}]$.
\end{lem}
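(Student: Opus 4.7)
The plan is to combine Theorem~\ref{compareofcanobasis} --- which gives $\mathbf{C}_A=\sum_{B\preccurlyeq A,\,B\in\ttz_\vartri^{ap}(n)}p_{B,A}E_B$ with $p_{A,A}=1$ and $p_{B,A}\in v^{-1}\bbz[v^{-1}]$ for $B\prec A$ --- with the PBW triangular relation $m^{(A)}=E_A+\sum_{B\prec A,\,\mathrm{ap}}f_{B,A}E_B$ of Section~\ref{1} and the $\wit{u}$-expansion \eqref{formula of E_A} of each $E_B$. In other words, the argument parallels Lemma~\ref{another const for hall alg}, but carried out inside $U_\cz^+$, using that its PBW and monomial bases are indexed only by $\ttz_\vartri^{ap}(n)$.

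First, since the matrix $(f_{B,A})_{B,A\in\ttz_\vartri^{ap}(n)_{\bf d}}$ is upper unitriangular with respect to $\preccurlyeq$, I invert it over $\cz$ to obtain $E_A=m^{(A)}+\sum_{B\prec A,\,\mathrm{ap}}g_{B,A}m^{(B)}$ for some $g_{B,A}\in\cz$. Substituting into the canonical basis expression yields a unique representation
\[
\mathbf{C}_A=m^{(A)}-\sum_{B\in\ci_{\prec A}\cap\ttz_\vartri^{ap}(n)}h'_{B,A}\,m^{(B)},\qquad h'_{B,A}\in\cz.
\]
Bar-invariance $\overline{h'_{B,A}}=h'_{B,A}$ then follows because $\mathbf{C}_A$ and every $m^{(B)}$ are bar-invariant, and $\{m^{(B)}:B\in\ttz_\vartri^{ap}(n)_{\bf d}\}$ is a $\cz$-basis of $U_{\cz,{\bf d}}^+$; applying $\overline{\phantom{x}}$ to both sides and comparing coefficients forces $\overline{h'_{B,A}}=h'_{B,A}$.

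To obtain the claimed $\wit{u}$-expansion I plug \eqref{formula of E_A} into $\mathbf{C}_A=\sum_{B\preccurlyeq A,\,\mathrm{ap}}p_{B,A}E_B$, which produces
\[
\mathbf{C}_A=\wit{u}_A+\sum_{B\in\ci_{\prec A}\cap\ttz_\vartri^{ap}(n)}p_{B,A}\,\wit{u}_B+\sum_{C\in\ci_{\prec A}\cap\ttz_\vartri^{p}(n)}\Big(\eta^{C}_A+\sum_{C\prec B\prec A,\,\mathrm{ap}}p_{B,A}\,\eta^{C}_B\Big)\wit{u}_C.
\]
Since $p_{B,A},\eta^{C}_A,\eta^{C}_B\in v^{-1}\bbz[v^{-1}]$, the coefficient of each $\wit{u}_B$ for aperiodic $B\prec A$ lies in $v^{-1}\bbz[v^{-1}]$, and the coefficient of each $\wit{u}_C$ for periodic $C\prec A$ is a $\bbz$-linear combination of elements and products of elements of $v^{-1}\bbz[v^{-1}]$, so it also belongs to $v^{-1}\bbz[v^{-1}]$; this is the refined statement. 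The constraint $C\in\ci_{\prec A}$ is automatic because $C\prec B\preccurlyeq A$ implies $C\prec A$.

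The main obstacle is conceptual rather than computational: two triangular systems are in play --- the $(m^{(\bullet)},E_\bullet)$-system indexed only by $\ttz_\vartri^{ap}(n)_{\bf d}$, and the $(E_\bullet,\wit{u}_\bullet)$-system which, via \eqref{formula of E_A}, reaches into the periodic rows $\ttz_\vartri^{p}(n)_{\bf d}$. Lemma~\ref{decomp hall alg as v.s} is what allows us to handle these two layers independently, and once this separation is acknowledged the sharp $v^{-1}\bbz[v^{-1}]$-property for the periodic coefficients drops out immediately from the multiplicativity of $v^{-1}\bbz[v^{-1}]$.
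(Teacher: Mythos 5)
Your proof is correct, but it runs in the opposite direction from the paper's. You start from the already-constructed canonical basis element $\mathbf{C}_A=\sum_{B\preccurlyeq A}p_{B,A}E_B$, invert the unitriangular transition between $\{m^{(B)}\}$ and $\{E_B\}$ to write $\mathbf{C}_A=m^{(A)}-\sum h'_{B,A}m^{(B)}$ (bar-invariance of the $h'_{B,A}$ coming from bar-invariance of $\mathbf{C}_A$ and of the $m^{(B)}$), and then read off the $\wit{u}$-expansion from \eqref{formula of E_A}; the $v^{-1}\bbz[v^{-1}]$ bound on the periodic coefficients indeed drops out because $\eta^{C}_A+\sum_{B}p_{B,A}\eta^{C}_B$ is built from sums and products of elements of $v^{-1}\bbz[v^{-1}]$. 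The paper instead argues bottom-up, exactly as in Lemma \ref{another const for hall alg}: it constructs the $h'_{B,A}$ by stripping off bar-invariant parts of the coefficients $h_{B,A}$ level by level along $\ci_{\prec A}^1,\ci_{\prec A}^2,\dots$ (restricted to aperiodic $B$), then rewrites the outcome in the $E$-basis and uses $\U^+\cap{\bf P}=0$ (Lemma \ref{decomp hall alg as v.s}) together with bar-invariance and the uniqueness of the canonical basis relative to $\{E_A\}$ to identify it with $\mathbf{C}_A$ and to force the periodic coefficients into $v^{-1}\bbz[v^{-1}]$. The practical difference: the paper's route is constructive (it tells you how to find the $h'_{B,A}$ from the $\wit{u}$-expansion of $m^{(A)}$ without knowing the $p_{B,A}$) and, more importantly, it proves the recognition criterion that is actually used in the next section (e.g.\ to conclude $\mathbf{C}_{A_1}=M(c)$): any bar-invariant element of the displayed shape, with periodic coefficients only known to lie in $\bbz[v,v^{-1}]$, is automatically the canonical basis element. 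Your argument gives existence of such an expression (with the sharper bound), but to certify a given candidate you would still need the paper's final step --- expand the candidate in the $E$-basis (its $E$-coefficients are just its aperiodic $\wit{u}$-coefficients) and invoke bar-invariance plus uniqueness. What your route buys in exchange is brevity: no induction along the Hasse levels and no explicit appeal to Lemma \ref{decomp hall alg as v.s}.
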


\begin{proof}
Since $$m^{(A)}=\wit{u}_A+\sum_{\stackrel{B\in\ci_{\prec A}}{B\in\ttz_{\vartri}^{ap}(n)}}h_{B,A}\wit{u}_B
+\sum_{\stackrel{C\in\ci_{\prec A}}{C\in\ttz_{\vartri}^{p}(n)}}\bbz[v,v^{-1}]\wit{u}_C,$$

choose $B\in\ttz_{\vartri}^{ap}(n)_{\bf d}$ such that $h_{B,A}\notin v^{-1}\bbz[v^{-1}]$ with $B\in \ci_{\prec A}^a$ and $a$ is minimal, then
$h_{B',A}\in v^{-1}\bbz[v^{-1}]$ if $B\prec B'$, or $B'\in \ci_{\prec A}^i,~i<a$.

Then
$$m^{(A)}-\sum_{B\in\ci_{\prec A}^a}h'_{B,A}m^{(B)}=\wit{u}_{A}+\sum_{\stackrel{B\in\ci_{\prec A}^i}{i<a}}v^{-1}\bbz[v^{-1}]
\wit{u}_B+\sum_{\stackrel{B\in\ci_{\prec A}^i}{i>a}}g_{B,A}\wit{u}_B+\sum_{\stackrel{C\in\ci_{\prec A}}{C\in\ttz_{\vartri}^{p}(n)}}\bbz[v,v^{-1}]\wit{u}_C.$$

By a similar argument with $g_{B,A},B\in\ttz_{\vartri}^{ap}(n)$ and continuing if necessary, we eventually obtain
$$m^{(A)}-\sum_{\stackrel{B\in\ci_{\prec A}}{B\in\ttz_{\vartri}^{ap}(n)}}h'_{B,A}m^{(B)}=\wit{u}_A
  +\sum_{\stackrel{B\in\ci_{\prec A}}{B\in\ttz_{\vartri}^{ap}(n)}}v^{-1}\bbz[v^{-1}]\wit{u}_B+\sum_{\stackrel{C\in\ci_{\prec A}}{C\in\ttz_{\vartri}^{p}(n)}}\bbz[v,v^{-1}]\wit{u}_C.$$

Using the relation $E_A=\wit{u}_A+\sum_{\stackrel{C_1\in\ci_{\prec A}}{C_1\in\ttz_{\vartri}^{p}(n)}}v^{-1}\bbz[v^{-1}]\wit{u}_{C_1}$, we obtain
\begin{equation*}
  \begin{split}
    &m^{(A)}-\sum_{\stackrel{B\in\ci_{\prec A}}{B\in\ttz_{\vartri}^{ap}(n)}}h'_{B,A}m^{(B)}\\
    =&(E_A-\sum_{\stackrel{C_1\in\ci_{\prec A}}{C_1\in\ttz_{\vartri}^{p}(n)}}v^{-1}\bbz[v^{-1}]\wit{u}_{C_1})+\sum_{\stackrel{B\in\ci_{\prec A}}{B\in\ttz_{\vartri}^{ap}(n)}}v^{-1}\bbz[v^{-1}](E_B-\sum_{\stackrel{C_2\in\ci_{\prec B}}{C_2\in\ttz_{\vartri}^{p}(n)}}v^{-1}\bbz[v^{-1}]\wit{u}_{C_2})\\
    &+\sum_{\stackrel{C\in\ci_{\prec A}}{C\in\ttz_{\vartri}^{p}(n)}}\bbz[v,v^{-1}]\wit{u}_C\\
    =&E_A+\sum_{\stackrel{B\in\ci_{\prec A}}{B\in\ttz_{\vartri}^{ap}(n)}}v^{-1}\bbz[v^{-1}]E_B\\
    &+\sum_{\stackrel{C\in\ci_{\prec A}}{C\in\ttz_{\vartri}^{p}(n)}}\bbz[v,v^{-1}]\wit{u}_C-\sum_{\stackrel{C_1\in\ci_{\prec A}}{C_1\in\ttz_{\vartri}^{p}(n)}}v^{-1}\bbz[v^{-1}]\wit{u}_{C_1}
    -\sum_{\stackrel{C_2\in\ci_{\prec A}}{C_2\in\ttz_{\vartri}^{p}(n)}}v^{-1}\bbz[v^{-1}]\wit{u}_{C_2}.
  \end{split}
\end{equation*}

By Lemma \ref{decomp hall alg as v.s}, we have

\begin{equation*}
\begin{split}
  &\sum_{\stackrel{C\in\ci_{\prec A}}{C\in\ttz_{\vartri}^{p}(n)}}\bbz[v,v^{-1}]\wit{u}_C-\sum_{\stackrel{C_1\in\ci_{\prec A}}{C_1\in\ttz_{\vartri}^{p}(n)}}v^{-1}\bbz[v^{-1}]\wit{u}_{C_1}
    -\sum_{\stackrel{C_2\in\ci_{\prec A}}{C_2\in\ttz_{\vartri}^{p}(n)}}v^{-1}\bbz[v^{-1}]\wit{u}_{C_2}\\
    &=m^{(A)}-\sum_{\stackrel{B\in\ci_{\prec A}}{B\in\ttz_{\vartri}^{ap}(n)}}h'_{B,A}m^{(B)}-(E_A+\sum_{\stackrel{B\in\ci_{\prec A}}{B\in\ttz_{\vartri}^{ap}(n)}}v^{-1}\bbz[v^{-1}]E_B)\in \U^+\cap {\bf P}=0.
\end{split}
\end{equation*}

Thus,

\begin{equation*}
    m^{(A)}-\sum_{\stackrel{B\in\ci_{\prec A}}{B\in\ttz_{\vartri}^{ap}(n)}}h'_{B,A}m^{(B)}=E_A+\sum_{\stackrel{B\in\ci_{\prec A}}{B\in\ttz_{\vartri}^{ap}(n)}}v^{-1}\bbz[v^{-1}]E_B.
\end{equation*}

Note that $\ol{m^{(A)}-\sum_{\stackrel{B\in\ci_{\prec A}}{B\in\ttz_{\vartri}^{ap}(n)}}h'_{B,A}m^{(B)}}=m^{(A)}-\sum_{\stackrel{B\in\ci_{\prec A}}{B\in\ttz_{\vartri}^{ap}(n)}}h'_{B,A}m^{(B)}$, by uniqueness of the canonical basis of $U_\cz^+$
with respect to the PBW basis $\{E_A\}_{A\in\ttz_\vartri^{ap}(n)}$, the result follows.
\end{proof}

Before ending this section, we give an example.
\begin{exam}Keep the setting as in example \ref{example 1}, consider $$\Pi_{\bf d}^{\preccurlyeq A_9}=\{A_9,A_{13},A_6,A_{12},A_{16},A_{15},A_{18}\}.$$

The Hasse diagram of $(\Pi_{\bf d}^{\preccurlyeq A_9},\preccurlyeq)$ has the form
$$\xymatrix{
 && A_9\ar@{-}[dl]\ar@{-}[dr]& & &&\\
&A_{13}\ar@{-}[ddr]&& A_6\ar@{-}[d]  &  &  \\
&& &A_{12}\ar@{-}[dl]\ar@{-}[dr]&& &&\\
 & &A_{15}\ar@{-}[dr]&& A_{16}\ar@{-}[dl] & \\
  &&& A_{18}
}$$

Using the matrix algorithm in \cite{DuZhaomultiplication}, we take the following distinguished words $12^23^3\in\wp^{-1}(A_9),2^213^3\in \wp^{-1}(A_{13}),123^32\in\wp^{-1}(A_6),
213^32\in\wp^{-1}(A_{12}),13^32^2\in\wp^{-1}(A_{16})$.
\begin{equation*}
  \begin{split}
    m^{(A_9)}&=\wit{u}_{A_9}+v^{-2}\wit{u}_{A_{13}}+v^{-2}\wit{u}_{A_6}+v^{-3}\wit{u}_{A_{12}}+v^{-6}\wit{u}_{A_{16}}+v^{-4}\wit{u}_{A_{15}}+v^{-8}\wit{u}_{A_{18}},\\
    m^{(A_{13})}&=\wit{u}_{A_{13}}+v^{-2}\wit{u}_{A_{15}}+v^{-6}\wit{u}_{A_{18}},\\
    m^{(A_{6})}&=\wit{u}_{A_{6}}+v^{-1}\wit{u}_{A_{12}}+v^{-2}\wit{u}_{A_{15}}+(v^{-2}+v^{-4})\wit{u}_{A_{16}}+(v^{-4}+v^{-6})\wit{u}_{A_{18}},\\
    m^{(A_{12})}&=\wit{u}_{A_{12}}+v^{-3}\wit{u}_{A_{16}}+v^{-1}\wit{u}_{A_{15}}+(v^{-3}+v^{-5})\wit{u}_{A_{18}},\\
    m^{(A_{16})}&=\wit{u}_{A_{16}}+v^{-2}\wit{u}_{A_{18}}.\\
  \end{split}
\end{equation*}

By Lemma \ref{construction of cano basis for affine sln}, they are just the canonical bases associated with $A_9,A_{13},A_6,A_{12},A_{16}$, respectively.
Here we omit the calculation of those PBW basis $E_A$.
\end{exam}

\section{Application to $\U_v^+(\wih{\fks\fkl}_2)$}
The slices of canonical bases of $\U_v^+(\wih{\fkg\fkl}_2)$ is defined in \cite{DuZhaomultiplication} to ease the difficult of computing the canonical bases, where the authors have determined all the canonical bases of $\U_v^+(\wih{\fkg\fkl}_2)$
associated to $A$ with $\ell(A)\leqs 2$. When restricted to those $A\in\ttz_\vartri^{ap}(2)$, the
canonical bases of $\U_v^+(\wih{\fks\fkl}_2)$ with $\ell(A)\leqs 2$ is obtain by Theorem \ref{thm 1} \& \ref{compareofcanobasis}. In this section, we will determine all the canonical bases of $\U_v^+(\wih{\fks\fkl}_2)$ associated to $A\in\ttz^{ap}_\vartri(2)$ with $\ell(A)=3$.

The slices of the canonical(resp. monomial) basis is defined according to the Loewy length. For $l\in\bbn_{>0}$, let

\begin{equation*}
  \scc(\wih{\fks\fkl}_n)_l=\{\mathbf{C}_A\mid \ell(A)=l\}\quad(\text{resp.}~\scm(\wih{\fks\fkl}_n)_l=\{m^{(A)}\mid \ell(A)=l\}),
\end{equation*}
which is called a {\it canonical (resp., monomial) slice}. Note that this is specialization of $\mathscr{C}(\wih{\fkg\fkl}_n)_{(l,p)}$ and $\mathscr{M}(\wih{\fkg\fkl}_n)_{(l,p)}$ with $p=0$ in \cite{DuZhaomultiplication}. Clearly, each of the bases is a disjoint union of slices.

For our purpose, we first modify the multiplication formula given in the middle of \cite[p. 42]{DuFu2015quantum}.

\begin{thm}[\cite{DuFu2015quantum}]\label{twisted-multip-theorem}
For $\az\in\bbn_\vartri^n,A\in\ttz_\vartri^+(n)$, the twisted multiplication formula in the Ringel-Hall algebra $\fkH_\vartri(n)$ over $\cz$ is given by:
\begin{equation*}
  \widetilde{u}_\az\widetilde{u}_A=\sum_{\stackrel{T\in \Theta_{\vartriangle}^+(n)}{\row(T)=\az}}v^{f'_{A,T}}\prod_{\stackrel{1\leqs i\leqs n}{j\in\bbz,j>i}}
  \begin{bmatrix} a_{ij}+t_{ij}-t_{i-1,j}\\ t_{ij} \end{bmatrix}\wit{u}_{A+T-\wih{T}^+},
\end{equation*}
where
\begin{equation*}
  f'_{A,T}=\sum_{\stackrel{1\leqs i\leqs n}{j>l\geqs i+1}}a_{i,j}t_{i,l}-\sum_{\stackrel{1\leqs i\leqs n}{j>l\geqs i+1}}a_{i+1,j}t_{i,l}
  -\sum_{\stackrel{1\leqs i\leqs n}{j>l\geqs i+1}}t_{i-1,j}t_{i,l}+\sum_{\stackrel{1\leqs i\leqs n}{j>l\geqs i+1}}t_{i,j}t_{i,l},
\end{equation*}
and~ $~\wih{\ }:\ttz_\vartri(n)\ra\ttz_\vartri(n), X=(x_{i,j})\mapsto \wih{X}=(\wih{x}_{i,j})$
is the row-descending map defined by $\wih{x}_{i,j}=x_{i-1,j}$ for all $i,j\in\bbz$ and $\wih{T}^+$ denotes the upper triangular submatrix of $\wih T$.

\end{thm}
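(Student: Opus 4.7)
The plan is to derive this formula from the untwisted Hall product by accounting for the normalization $\wit u_A = v^{\dz(A)} u_A$. The combinatorial heart, namely parameterizing the short exact sequences $0\ra M(A)\ra X\ra S_\az\ra 0$ by matrices $T$ with $\row(T)=\az$, is clean because $S_\az$ is semisimple; the twist then produces the $v^{f'_{A,T}}$ factor.

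First, I would parameterize the extensions. Since $\Ext^1(S_i, S_j[l])\neq 0$ if and only if $j\equiv i+1\pmod n$ (in which case a nonzero extension yields $S_i[l+1]$), each extension of $S_\az$ by $M(A)$ is uniquely encoded by $T=(t_{ij})$ with $\row(T)=\az$: for $j>i+1$, $t_{ij}$ counts the copies of $S_i$ in $S_\az$ used to extend copies of $S_{i+1}[j-i-1]$ into $S_i[j-i]$, while $t_{i,i+1}$ counts the $S_i$'s left as direct summands. The middle term is $X\cong M(A+T-\wih T^+)$, because each such extension at $(i,j)$ shifts a summand from position $(i+1,j)$ to $(i,j)$, which is exactly the bookkeeping recorded by $+T-\wih T^+$. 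The basic Hall product
\[
u_\az u_A = v^{\lan\bdim S_\az,\bdim M(A)\ran}\sum_T \vz^{A+T-\wih T^+}_{S_\az, M(A)}(v^2)\,u_{A+T-\wih T^+}
\]
then enumerates these. To evaluate $\vz^{A+T-\wih T^+}_{S_\az, M(A)}(v^2)$ I would count, for each $(i,j)$, the choices distinguishing the $t_{ij}$ newly-extended $(i,j)$-summands from the $a_{ij}-t_{i-1,j}$ pre-existing ones among the total $a_{ij}+t_{ij}-t_{i-1,j}$ summands of that type in $X$; this produces the product of Gaussian binomials $\prod\begin{bmatrix} a_{ij}+t_{ij}-t_{i-1,j}\\ t_{ij}\end{bmatrix}$ up to a $v$-power from passing between symmetric and asymmetric forms.

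Finally, applying $\wit u_B = v^{\dz(B)}u_B$ on both sides, the exponent of $v$ on each $\wit u_{A+T-\wih T^+}$ term accumulates four pieces: $\dz(S_\az)+\dz(A)-\dz(A+T-\wih T^+)$ from the renormalization, $\lan\bdim S_\az,\bdim M(A)\ran$ from the Hall product, the $-\sum t_{ij}(a_{ij}-t_{i-1,j})$ from the symmetric-versus-asymmetric binomial normalization, and a final bookkeeping contribution from the Hall count itself. Expanding $\dim\End(M(B))$ as a sum of pairwise overlaps among the indecomposable summands of $M(B)$ under the cyclic convention, and combining with $\lan-,-\ran$, the quadratic cross-terms between entries of $A$ and $T$ collapse to precisely $f'_{A,T}$. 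The main obstacle is exactly this collapse, which requires an explicit expansion of $\dz(A+T-\wih T^+)-\dz(A)$ into overlap contributions at positions $(i,j)$ and $(i-1,j)$ for each pair $i<j$; since this computation is already implicit in \cite{DuFu2015quantum}, I would appeal to their verification and only redo the portion governing the passage to the $\wit u$-normalization.
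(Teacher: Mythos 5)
Your route is genuinely different from the paper's, and longer than necessary. The paper's entire proof consists of quoting the formula from \cite[p.~42]{DuFu2015quantum}, which is \emph{already} in the twisted $\wit{u}$-normalization: $\wit{u}_\az\wit{u}_A=\sum_T v^{f_{A,T}}\prod\ol{\left[\!\!\left[\begin{smallmatrix} a_{ij}+t_{ij}-t_{i-1,j}\\ t_{ij}\end{smallmatrix}\right]\!\!\right]}\wit{u}_{A+T-\wih{T}^+}$, where $f_{A,T}$ differs from $f'_{A,T}$ only in that the first and third sums run over $j\geqs l$ rather than $j>l$. The only work is the conversion of Gaussian-binomial conventions via $\begin{bmatrix}N\\t\end{bmatrix}=v^{t(N-t)}\ol{\left[\!\!\left[\begin{smallmatrix}N\\ t\end{smallmatrix}\right]\!\!\right]}$, which introduces the correction $-\sum_{j>i}t_{ij}(a_{ij}-t_{i-1,j})$, and this cancels exactly the $j=l$ diagonal terms of $f_{A,T}$, yielding $f'_{A,T}$. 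You instead propose to rederive the formula from the untwisted Hall product: your parameterization of the extensions of $S_\az$ by $M(A)$ via $T$ (using that any submodule with semisimple quotient contains the radical), the identification of the middle term as $M(A+T-\wih{T}^+)$, and your four-piece accounting of the $v$-exponent are all sound in outline, and you even name the correct binomial-normalization correction $-\sum t_{ij}(a_{ij}-t_{i-1,j})$. However, you defer precisely the hard quadratic-exponent collapse (the expansion of $\dz(A+T-\wih{T}^+)-\dz(A)$ together with the Euler form and the $q$-power hidden in the Hall count) to \cite{DuFu2015quantum}, so as a self-contained argument your proposal is incomplete at its central step; and your stated plan to ``redo the passage to the $\wit{u}$-normalization'' is slightly misaimed, since the cited formula is already twisted --- what actually remains, and what the paper verifies, is only the binomial-convention change and the resulting cancellation of the $j=l$ terms. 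If you complete that one cancellation explicitly, your appeal to the citation lands in the same place as the paper's proof; what your longer route buys is a representation-theoretic explanation of where $T$, $A+T-\wih{T}^+$ and the binomial factors come from, which the paper does not attempt.
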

\begin{proof}
  By the formula on \cite[p. 42]{DuFu2015quantum}, we have
  \begin{equation*}
  \widetilde{u}_\az\widetilde{u}_A=\sum_{\stackrel{T\in \Theta_{\vartriangle}^+(n)}{\row(T)=\az}}v^{f_{A,T}}\prod_{\stackrel{1\leqs i\leqs n}{j\in\bbz,j>i}}
  \ol{\left[\!\!\left[\begin{matrix} a_{ij}+t_{ij}-t_{i-1,j}\\ t_{ij} \end{matrix}\right]\!\!\right]}\wit{u}_{A+T-\wih{T}^+},
\end{equation*}
where
\begin{equation*}
  f_{A,T}=\sum_{\stackrel{1\leqs i\leqs n}{j\geqs l\geqs i+1}}a_{i,j}t_{i,l}-\sum_{\stackrel{1\leqs i\leqs n}{j>l\geqs i+1}}a_{i+1,j}t_{i,l}
  -\sum_{\stackrel{1\leqs i\leqs n}{j\geqs l\geqs i+1}}t_{i-1,j}t_{i,l}+\sum_{\stackrel{1\leqs i\leqs n}{j>l\geqs i+1}}t_{i,j}t_{i,l}.
\end{equation*}

Using the identity $$\begin{bmatrix}
     N\\t
   \end{bmatrix}=v^{-t(N-t)}\left[\!\!\left[\begin{matrix} N\\ t \end{matrix}\right]\!\!\right]
   =v^{t(N-t)}\ol{\left[\!\!\left[\begin{matrix} N\\ t \end{matrix}\right]\!\!\right]},$$
we see
\begin{equation*}
\begin{split}
  f'_{A,T}&=\sum_{\stackrel{1\leqs i\leqs n}{j\geqs l\geqs i+1}}a_{i,j}t_{i,l}-\sum_{\stackrel{1\leqs i\leqs n}{j>l\geqs i+1}}a_{i+1,j}t_{i,l}
  -\sum_{\stackrel{1\leqs i\leqs n}{j\geqs l\geqs i+1}}t_{i-1,j}t_{i,l}+\sum_{\stackrel{1\leqs i\leqs n}{j>l\geqs i+1}}t_{i,j}t_{i,l}-\sum_{\stackrel{1\leqs i\leqs n}{j>i}}t_{ij}(a_{ij}-t_{i-1,j})\\
  &=\sum_{\stackrel{1\leqs i\leqs n}{j>l\geqs i+1}}a_{i,j}t_{i,l}-\sum_{\stackrel{1\leqs i\leqs n}{j>l\geqs i+1}}a_{i+1,j}t_{i,l}
  -\sum_{\stackrel{1\leqs i\leqs n}{j>l\geqs i+1}}t_{i-1,j}t_{i,l}+\sum_{\stackrel{1\leqs i\leqs n}{j>l\geqs i+1}}t_{i,j}t_{i,l}.
\end{split}
\end{equation*}
as desired.
\end{proof}

Next, we review the structure of $m^{(A)}$ with $A\in\ttz_\vartri^{ap}(2)$.

A sequence $(a_1,a_2,\cdots,a_l)\in\bbn^l$ is called a \emph{pyramidic} if there exists $k,~1\leqs k\leqs l$, such that
\begin{equation*}
  a_1\leqs a_2\leqs \cdots\leqs a_k,~a_k\geqs a_{k+1}\geqs \cdots\geqs a_{l}.
\end{equation*}

In the following, we identify $U_\cz^+$ with the composition algebra under the
isomorphism $\fkC_\vartri(n)\iso U_\cz^+,~u_i^{(m)}\mapsto E_i^{(m)}$ as given in Theorem \ref{isomorphism theorem for composition algebra}.

\begin{lem}[\cite{DuZhaomultiplication}]\label{structure of aperiodic part of monomial basis}We have
$$\scm(\wih{\fks\fkl}_2)=\{E_{i}^{(a_1)}E_{i+1}^{(a_2)}E_{i}^{(a_3)}E_{i+1}^{(a_4)}\cdots E_{i'}^{(a_l)}\mid i,i+1\in\bbz_2,(a_1,a_2,\cdots,a_l)~\text{is pyramidic},\forall l\in\bbn\},$$
where $i'=i$ if $l$ is odd and $i'=i+1$ if $l$ is even.
\end{lem}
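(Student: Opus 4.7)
The strategy is to translate the monomial basis of $\fkC_\vartri(2)$ into divided-power monomials in $U_\cz^+$ via the isomorphism of Theorem~\ref{isomorphism theorem for composition algebra}, and then to characterise which alternating divided-power monomials arise as $m^{(A)}$ for an aperiodic $A$.

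First I would dispose of the translation step. The isomorphism $\fkC_\vartri(2)\iso U_\cz^+$ sends $u_i^{(m)}\mapsto E_i^{(m)}$, and a direct computation with $\dz(mS_i)=\dim\End(mS_i)-\dim(mS_i)=m^2-m$ gives $\wit u_{mS_i}=v^{m(m-1)}u_{mS_i}=u_i^{(m)}$. Consequently, for any word $w\in\ssz$ in tight form $w={\bm b}_1^{e_1}\cdots{\bm b}_t^{e_t}$ (all ${\bm b}_k\in I$, which is automatic here since $\wp(w)\in\ttz_\vartri^{ap}(n)$ excludes sincere letters when $n=2$), the monomial $m^{(w)}=\wit u_{e_1{\bm b}_1}\cdots\wit u_{e_t{\bm b}_t}$ corresponds to $E_{{\bm b}_1}^{(e_1)}\cdots E_{{\bm b}_t}^{(e_t)}$. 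For $n=2$ the alphabet $I$ has only two letters, so being in tight form forces consecutive letters to alternate; hence every monomial in $\scm(\wih{\fks\fkl}_2)$ is automatically of the shape $E_i^{(a_1)}E_{i+1}^{(a_2)}E_i^{(a_3)}\cdots E_{i'}^{(a_l)}$.

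The remaining task is to determine which exponent sequences $(a_1,\ldots,a_l)$ occur. I would prove both inclusions by characterising distinguished sections over $\ttz_\vartri^{ap}(2)$. For the inclusion $\subseteq$, given an aperiodic $A\in\ttz_\vartri^+(2)$, I would use the matrix algorithm of \cite{DuZhaomultiplication} to exhibit a distinguished word $w_A\in\ssz\cap\wp^{-1}(A)$. Since for $n=2$ aperiodicity says that for each $l\geqs 1$ at most one of $S_1[l],S_2[l]$ appears as a summand of $M(A)$, reading off exponents from the socle series yields an alternating word, and the "at most one of the pair" constraint forces the sequence of exponents to be unimodal (i.e.\ weakly increase up to some peak index $k$ and then weakly decrease), which is exactly the pyramidic condition. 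For the inclusion $\supseteq$, given any pyramidic alternating word $w=i^{a_1}(i+1)^{a_2}\cdots(i')^{a_l}$, I would compute $\wp(w)=S_i^{a_1}*S_{i+1}^{a_2}*\cdots*S_{i'}^{a_l}$ iteratively via the generic extension rule, verify that aperiodicity of the resulting matrix follows from the unimodal shape of $(a_1,\ldots,a_l)$, and then check distinguishedness $\gz^{\wp(w)}_w(v^2)=1$ by induction on $l$ (each extension step by a semisimple summand at the top of the current module contributes a Hall number equal to $1$ under the generic-extension convention, because the filtration of length $l$ into semisimple layers is uniquely determined once the underlying Loewy structure is pyramidic).

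The main obstacle is the combinatorial bookkeeping in this last paragraph: matching the pyramidic condition on exponents precisely with aperiodicity of the generic extension and with distinguishedness of the corresponding word. I would carry this out inductively, splitting the pyramidic sequence at its peak $a_k$ and tracking how the ascending prefix $a_1\leqs\cdots\leqs a_k$ builds up the indecomposables $S_{i}[2k\!-\!1]$ or $S_{i}[2k]$ (up to parity of $k$) while the descending suffix $a_k\geqs\cdots\geqs a_l$ pares off shorter summands, using the multiplication formula of Theorem~\ref{twisted-multip-theorem} to control the $v$-powers and Gaussian binomials, which telescope to $1$ at each step when the sequence is pyramidic. Once this core computation is established, the two inclusions combine with Step~1 to yield the description of $\scm(\wih{\fks\fkl}_2)$.
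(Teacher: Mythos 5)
The paper itself offers no proof of this lemma (it is quoted from \cite{DuZhaomultiplication}), so there is no internal argument to compare against; your plan follows the same natural route as that source: transport the monomial basis through Theorem \ref{isomorphism theorem for composition algebra} and identify the distinguished words over $\ttz_\vartri^{ap}(2)$ with the alternating words whose exponent sequences are pyramidic. The translation step is correct. In the combinatorial core, one claim is misplaced: aperiodicity of $\wp(w)$ has nothing to do with pyramidicity, since it holds for every $w\in\ssz$ — this is exactly the surjectivity $\wp:\ssz\to\ttz_\vartri^{ap}(n)$ recalled in Section 3. What pyramidicity actually buys is distinguishedness, and your justification of that should be sharpened: for a step with layer $eS_i$, the number of admissible submodules of the current term $M_{r-1}$ is the Gaussian binomial counting codimension-$e$ subspaces of the $S_i$-isotypic part of the top of $M_{r-1}$, which equals $1$ precisely when $e$ exhausts that isotypic component. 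Note this layer is in general only a proper direct summand of the top (for $M=S_1[3]\oplus S_2$ and $w=12^21$ the first layer is $S_1$, not $S_1\oplus S_2$), so uniqueness of the filtration is not because one strips ``the semisimple summand at the top of the current module''; the rule deciding which vertex to strip when the top is sincere is exactly where the pyramid shape enters, both here and in your appeal to the matrix algorithm for the inclusion $\subseteq$.

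The substantive gap is in the inclusion $\supseteq$. Showing that every alternating pyramidic word $w$ is distinguished for $A=\wp(w)$ does not yet place $m^{(w)}$ in $\scm(\wih{\fks\fkl}_2)=\{m^{(A)}\mid A\in\ttz_\vartri^{ap}(2)\}$, because the monomial basis is built from one fixed distinguished section: you still need $w$ to coincide with the chosen word $w_A$, equivalently that each fibre $\wp^{-1}(A)$ with $A$ aperiodic contains exactly one alternating pyramidic word and that this is the word the algorithm of \cite{DuZhaomultiplication} returns. Without such a uniqueness (or section-compatibility) statement, your two inclusions do not combine into the asserted set equality, since a priori a pyramidic monomial could arise from a distinguished word the section did not select. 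The uniqueness does hold for $n=2$ and can be proved by the same top-stripping induction you propose for the other direction, but it must be stated and carried out for the argument to close.
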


Easy to see that $$\scm(\wih{\fks\fkl}_2)_3=\{E_{i}^{(a)}E_{j}^{(b)}E_{i}^{(c)}
\mid (i,j,i)=(1,2,1)~\text{or}~(2,1,2),(a,b,c)~\text{is pyramidic},a,b,c\in\bbn_{>0}\}.$$

More precisely, pyramidic sequence $(a,b,c)$ is equivalent to $a\geqs b\geqs c$, or $c\geqs b\geqs a$, or $b\geqs\{a,c\}$.

For $\bm i=(i_1,i_2,\cdots,i_t)\in I^t$ and $\bm a=(a_1,a_2,\cdots,a_t)\in\bbn^t$, define the monomial
$$E_{\bm i}^{(\bm a)}=E_{i_1}^{(a_1)}\cdots E_{i_t}^{(a_t)}.$$

Recall $\scc(\wih{\fks\fkl}_n)$ is the set of canonical bases of $U_\cz^+=U_v^+(\wih{\fks\fkl}_n)$. By \cite{Lusztig1993tight}, a monomial
$E_{\bm i}^{(\bm a)}$ is called {\it tight} if it belongs to $\scc(\wih{\fks\fkl}_n)$.

The following result will be proved in the appendix via the quadratic
form developed by \cite{DengDu2010tight}.

\begin{lem}\label{iff condition of tightness for 3 terms}
  For $\bm i=\{(1,2,1),(2,1,2)\}$ and $\bm a=\{(a,b,c)\in\bbn^3_{>0}\}$, $E_{\bm i}^{(\bm a)}$ is
  tight monomial if and only if $2b\geqs a+c$.
\end{lem}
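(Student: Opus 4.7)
The plan is to apply the quadratic-form criterion for tight monomials of Deng-Du \cite{DengDu2010tight}. Under the isomorphism $\fkC_\vartri(2)\iso U_\cz^+$ of Theorem \ref{isomorphism theorem for composition algebra}, our monomial $E_{\bm i}^{(\bm a)}$ is identified with $m^{(w)}$ for the tight word $w = 1^a 2^b 1^c$ (resp.\ $2^a 1^b 2^c$), because $\wit u_{m S_i} = u_i^{(m)}$. Hence $E_{\bm i}^{(\bm a)}$ is tight iff $m^{(w)}$ is already a canonical basis element of $U_\cz^+$, which by Lemma~\ref{construction of cano basis for affine sln} amounts to requiring that every coefficient $v^{\dz(\wp(w))-\dz(B)}\gz^B_w(v^2)$ appearing in the triangular expansion \eqref{triangluarrelation} lies in $v^{-1}\bbz[v^{-1}]$ for each aperiodic $B\prec\wp(w)$; the periodic contributions are automatically absorbed via Lemma~\ref{decomp hall alg as v.s}.

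First I would enumerate the aperiodic $B\prec\wp(w)$ carrying a nonzero Hall polynomial $\gz^B_w(v^2)$. Because $w$ has only three blocks and $\ddz(2)$ satisfies $\dim\Ext^1(S_i,S_j)=[i\neq j]$, such submodules are parametrised by a single integer $k\geqs 0$ measuring how many of the $b$ copies of $S_2$ are absorbed by the initial $aS_1$-block as opposed to the terminal $cS_1$-block. For each $B_k$, using Theorem \ref{twisted-multip-theorem} together with the formula $\dz(X)=\dim\End M(X)-\dim M(X)$, I would compute the exponent appearing in $h_{B_k,w}:=v^{\dz(\wp(w))-\dz(B_k)}\gz^{B_k}_w(v^2)$. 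It turns out to be a bar-invariant Gaussian binomial times $v^{Q(k)}$, where $Q(k)$ is an integral quadratic polynomial in $k$ whose leading coefficient is governed by the Cartan entry $-2$ and whose constant term is controlled by $2b-a-c$.

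The Deng-Du quadratic form attached to $w$ is precisely this $Q$, so tightness is equivalent to the inequality $Q(k)+\deg_v(\text{Gaussian factor})\leqs -1$ holding uniformly in the admissible range of $k$. An elementary analysis of the vertex of this parabola shows that the latter holds if and only if $2b\geqs a+c$: in the direction $2b\geqs a+c$, the maximum of $Q(k)$ over $1\leqs k\leqs \min(a,c)$ stays below $-1$, whereas when $2b<a+c$ one can exhibit a specific $k_0$ for which $Q(k_0)\geqs 0$, so $h_{B_{k_0},w}\notin v^{-1}\bbz[v^{-1}]$ and $m^{(w)}\neq \mathbf{C}_{\wp(w)}$. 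The parallel word $2^a 1^b 2^c$ is then handled by the obvious diagram automorphism of $\ddz(2)$ exchanging the two vertices.

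The main obstacle will be the bookkeeping needed to distil the quadratic form $Q$ cleanly from the many Gaussian binomial factors produced by Theorem \ref{twisted-multip-theorem}, and to verify that periodic matrices in $\ttz_\vartri^p(2)$ contribute nothing harmful to the sharp threshold $2b=a+c$. In particular, one has to check that the cancellation $\U^+\cap {\bf P}=0$ from Lemma~\ref{decomp hall alg as v.s} is compatible with the parameter $k$, so that the tightness test on aperiodic terms alone suffices to detect the failure when $2b<a+c$.
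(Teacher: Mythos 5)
Your route is not the paper's, and as written it leaves the decisive steps unproved. The paper's argument is a direct, self-contained application of the Deng--Du criterion recalled in the appendix: for $\bm i=(1,2,1)$ and $\bm a=(a,b,c)$ the set $\scm_{\bm i,\bm a}\setminus\{D_{\bm a}\}$ really is a one-parameter family, namely the matrices $A_x$ with $a_{22}=b$, $a_{11}=a-x$, $a_{33}=c-x$, $a_{13}=a_{31}=x$ for $0<x\leqs\min\{a,c\}$, and a short evaluation gives $\mathbbm{q}(A_x)=-2x^2+2x(a+c-2b)$, which is negative for every such $x$ precisely when $2b\geqs a+c$; the case $\bm i=(2,1,2)$ is symmetric. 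You invoke this criterion in your first sentence but never evaluate $\mathbbm{q}$ on anything; instead you re-route through the Hall-algebra expansion, and the sharp threshold appears only as an assertion (``an elementary analysis of the vertex \dots shows''), with neither your quadratic $Q$ nor the degrees of the Gaussian factors actually computed. Moreover, the identification ``the Deng--Du quadratic form attached to $w$ is precisely this $Q$'' is unjustified: $\mathbbm{q}$ is defined on $3\times 3$ matrices with row and column sums $(a,b,c)$ supported on positions with equal vertex labels, not on the modules occurring in the expansion of $m^{(w)}$. If you are willing to use the criterion at all, the entire detour through Theorem \ref{twisted-multip-theorem} and Lemma \ref{construction of cano basis for affine sln} is superfluous.

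If you insist on the Hall-algebra route, it is legitimate in principle: $m^{(w)}$ is bar-invariant, and by the triangular form \eqref{formula of E_A} of $E_A$ tightness is indeed equivalent to all aperiodic lower coefficients in the $\wit{u}$-expansion lying in $v^{-1}\bbz[v^{-1}]$. But your enumeration is wrong: the aperiodic $B\prec\wp(w)$ with $\gz^{B}_w\neq 0$ are not parametrised by a single integer $k$. For $n=2$ and Loewy length $3$ the lower terms depend on three parameters (the $(t_1,t_2,t_3)$ of the Section 6 computation), and aperiodicity carves out several one-parameter families plus isolated terms --- exactly the four cases listed in the proof of the theorem of Section 6 --- each with its own coefficient $v^{f}$ times a symmetric Gaussian binomial whose top degree must be controlled; in particular the ``only if'' direction requires exhibiting one such coefficient outside $v^{-1}\bbz[v^{-1}]$ when $2b<a+c$. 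None of this is carried out, so both directions of the equivalence are left open; filling the gap this way would amount to redoing a harder version of the Section 6 calculation, whereas the intended proof is the two-line evaluation of $\mathbbm{q}(A_x)$ above.
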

\begin{rmk}
 If one of $a,b,c$ is zero, the tightness of
$E_{\bm i}^{(\bm a)}$ has been studied in \cite[Prop 6.4]{DuZhaomultiplication}.
\end{rmk}

We need the following identity for symmetric Gaussian polynomials.
\begin{lem}[{\cite[Sec 3.1]{XicanonicalA31999}}]\label{technic-lem-of-gauss-poly}
Assume that $m\geqs k\geqs 0,\dz\in\bbn$. Then
  \begin{equation*}
    \sum_{i=0}^\dz(-1)^iv^{i(m-k)}\begin{bmatrix}
    k-1+i\\k-1
  \end{bmatrix}\begin{bmatrix}
    m\\ \dz-i
  \end{bmatrix}=v^{-k\dz}\begin{bmatrix}
    m-k\\\dz
  \end{bmatrix}.
  \end{equation*}
\end{lem}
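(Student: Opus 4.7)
The plan is to deduce the identity from the symmetric $q$-Vandermonde convolution after first converting the alternating sum on the left-hand side into an ordinary convolution by means of the negation rule for Gaussian polynomials.

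First, I would use the symmetry $\begin{bmatrix} k-1+i \\ k-1 \end{bmatrix} = \begin{bmatrix} k-1+i \\ i \end{bmatrix}$ together with the negation formula
$$\begin{bmatrix} -k \\ i \end{bmatrix} = (-1)^i \begin{bmatrix} k-1+i \\ i \end{bmatrix},$$
which is an immediate consequence of $[-n]_v = -[n]_v$ applied to each of the $i$ factors in the numerator of the defining product. This rewrites the left-hand side of the lemma as
$$\sum_{i=0}^{\delta} v^{i(m-k)} \begin{bmatrix} -k \\ i \end{bmatrix} \begin{bmatrix} m \\ \delta - i \end{bmatrix}.$$

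Second, I would invoke the symmetric $q$-Vandermonde convolution
$$\begin{bmatrix} M+N \\ \delta \end{bmatrix} \;=\; \sum_{i=0}^{\delta} v^{\,iM - (\delta - i)N}\begin{bmatrix} N \\ i \end{bmatrix} \begin{bmatrix} M \\ \delta - i \end{bmatrix}$$
and specialize it at $M=m$, $N=-k$. The left-hand side becomes $\begin{bmatrix} m-k \\ \delta \end{bmatrix}$, while the exponent in each summand collapses to
$$iM - (\delta - i)N \;=\; im - (\delta - i)(-k) \;=\; i(m-k) + k\delta.$$
Factoring $v^{k\delta}$ out of the sum and matching with the expression obtained in the first step gives exactly $\text{LHS}=v^{-k\delta}\begin{bmatrix} m-k \\ \delta \end{bmatrix}$, which is the claimed identity.

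The main obstacle is that the precise $v$-exponent in the symmetric $q$-Vandermonde identity varies across references, and one must verify that the convention is compatible with the normalization $\begin{bmatrix} N \\ t \end{bmatrix} = v^{-t(N-t)}\left[\!\!\left[\begin{smallmatrix} N \\ t \end{smallmatrix}\right]\!\!\right]$ fixed in the paper (the sanity checks at $i=0$ and $i=\delta$ are quick). As a fallback, I would prove the lemma by induction on $\delta$: the case $\delta=0$ is trivial, and the inductive step follows by expanding $\begin{bmatrix} m \\ \delta-i \end{bmatrix}$ via the $q$-Pascal rule
$$\begin{bmatrix} m \\ \delta - i \end{bmatrix} \;=\; v^{-(\delta-i)}\begin{bmatrix} m-1 \\ \delta - i \end{bmatrix} + v^{\,m-\delta+i}\begin{bmatrix} m-1 \\ \delta-i-1 \end{bmatrix},$$
re-indexing the second piece via $i \mapsto i-1$, and applying the inductive hypothesis to the two smaller instances with $m$ replaced by $m-1$; this avoids citing the exponent form of Vandermonde entirely.
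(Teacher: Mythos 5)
The paper itself gives no proof of this lemma --- it is quoted verbatim from \cite[\S 3.1]{XicanonicalA31999} --- so there is no in-paper argument to compare with; judged on its own, your main derivation is correct. With the paper's symmetric convention $\begin{bmatrix} N\\ t\end{bmatrix}=v^{-t(N-t)}\left[\!\!\left[\begin{smallmatrix} N\\ t\end{smallmatrix}\right]\!\!\right]$, the negation rule $\begin{bmatrix} -k\\ i\end{bmatrix}=(-1)^i\begin{bmatrix} k-1+i\\ i\end{bmatrix}$ is valid, the Vandermonde exponent $iM-(\delta-i)N$ is the correct one for this normalization (e.g.\ at $\delta=1$ it reads $v^{-N}[M]+v^{M}[N]=[M+N]$), and at $M=m$, $N=-k$ the exponent collapses to $i(m-k)+k\delta$ exactly as you compute, yielding the lemma. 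The one step you should make explicit is why the convolution may be invoked at the negative upper index $N=-k$: the standard proofs establish it for $M,N\in\bbn$, so add the routine remark that, for fixed $\delta$ and $M$, both sides are Laurent polynomials in $v^{N}$, hence agreement for all $N\in\bbn$ forces agreement for every integer $N$ (or cite a version stated for arbitrary integer upper index). With that sentence the argument is complete and is a clean, self-contained alternative to simply citing Xi.

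Your fallback induction, by contrast, does not go through as written. After expanding $\begin{bmatrix} m\\ \delta-i\end{bmatrix}=v^{-(\delta-i)}\begin{bmatrix} m-1\\ \delta-i\end{bmatrix}+v^{m-\delta+i}\begin{bmatrix} m-1\\ \delta-i-1\end{bmatrix}$, the weight in the first piece becomes $(-1)^iv^{i(m-k)-(\delta-i)}=(-1)^iv^{-\delta}v^{i((m+1)-k)}$, i.e.\ the exponent shifts the wrong way, so the inductive hypothesis with $m-1$ in place of $m$ is not applicable; note also that this piece keeps the same $\delta$, so the recursion is not an induction on $\delta$ alone. Using the other Pascal rule $\begin{bmatrix} m\\ t\end{bmatrix}=v^{t}\begin{bmatrix} m-1\\ t\end{bmatrix}+v^{t-m}\begin{bmatrix} m-1\\ t-1\end{bmatrix}$ does produce $v^{i((m-1)-k)}$ in both pieces and, via Pascal for $\begin{bmatrix} m-k\\ \delta\end{bmatrix}$, reduces the identity to the instances $(m-1,\delta)$ and $(m-1,\delta-1)$; but that is an induction on $m$ whose base case $m=k$ is the nontrivial vanishing $\sum_{i}(-1)^i\begin{bmatrix} k-1+i\\ k-1\end{bmatrix}\begin{bmatrix} k\\ \delta-i\end{bmatrix}=0$ for $\delta\geqs 1$, so the fallback needs genuine repair. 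Since the Vandermonde route already works, either drop the fallback or rebuild it along these lines.
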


The main result in this section is the following.

\begin{thm}
Suppose $A\in\ttz_\vartri^{ap}(2)$ with $\ell(A)=3$, then there are 8 different matrices as follows
\begin{equation*}
\begin{split}
  A_1=
\begin{pmatrix}
  \bf 0 & a & b & c\\
  0 & \bf 0 & 0 & 0
\end{pmatrix},~A_2=\begin{pmatrix}
  \bf 0 & 0 & 0 & 0 & 0 \\
  0 & \bf 0 & a & b & c
\end{pmatrix},~A_3=\begin{pmatrix}
  \bf 0 & a & 0 & c \\
  0 & \bf 0 & 0 & b
\end{pmatrix},~A_4=\begin{pmatrix}
  \bf 0 & 0 & b & 0 & 0\\
  0 & \bf 0 & a & 0 & c
\end{pmatrix},\\
  A_5=
\begin{pmatrix}
  \bf 0 & 0 & b & c\\
  0 & \bf 0 & a & 0
\end{pmatrix},~A_6=\begin{pmatrix}
  \bf 0 & a & 0 & 0 & 0 \\
  0 & \bf 0 & 0 & b & c
\end{pmatrix},~A_7=\begin{pmatrix}
  \bf 0 & 0 & 0 & c \\
  0 & \bf 0 & a & b
\end{pmatrix},~A_8=\begin{pmatrix}
  \bf 0 & a & b & 0 & 0\\
  0 & \bf 0 & 0 & 0 & c
\end{pmatrix}.
\end{split}
\end{equation*}

Here $a,b,c\in\bbn,c>0$.

\begin{enumerate}[\rm(1)]
\item $m^{(A_5)},m^{(A_6)},m^{(A_7)},m^{(A_8)}$ are tight monomials.
\item For $i=1,2,3,4$, $m^{(A_i)}$ is tight monomial if and only if $a\leqs b$. If $a>b$, for $0\leqs k\leqs c$, set
\begin{equation*}
\begin{split}
 A_1^{(k)}&=
\begin{pmatrix}
  \bf 0 & a+c-k & b+c-k & k \\
  0 & \bf 0 & 0 & 0
\end{pmatrix},~\text{and}~m_1^{(k)}=m^{(A_1^{(k)})}, \\
 A_2^{(k)}&=
\begin{pmatrix}
  \bf 0 & 0 & 0 & 0 & 0 \\
  0 & \bf 0 & a+c-k & b+c-k & k
\end{pmatrix},~\text{and}~m_2^{(k)}=m^{(A_2^{(k)})},\\
 A_3^{(k)}&=
\begin{pmatrix}
  \bf 0 & a+c-k & 0 & k \\
  0 & \bf 0 & 0 & b+c-k
\end{pmatrix},~\text{and}~m_3^{(k)}=m^{(A_3^{(k)})},\\
 A_4^{(k)}&=
\begin{pmatrix}
  \bf 0 & 0 & b+c-k & 0 & 0\\
  0 & \bf 0 & a+c-k &0 & k
\end{pmatrix},~\text{and}~m_4^{(k)}=m^{(A_4^{(k)})},\\
\end{split}
\end{equation*}
(Note that $A_i^{(c)}=A_i,~m_i^{(c)}=m^{(A_i)}$)
then

\begin{equation*}
  \mathbf{C}_{A_{i}}=\sum_{k=0}^c(-1)^{c-k}\begin{bmatrix}
    a-b-1+c-k\\a-b-1
  \end{bmatrix}m^{(k)}_i
\end{equation*}
is the canonical basis associated to $A_i$. In other words,

\begin{equation*}
  \scc(\wih{\fks\fkl}_2)_3=\{\mathbf{C}_{A_{1}},\mathbf{C}_{A_{2}},\mathbf{C}_{A_{3}},\mathbf{C}_{A_{4}},m^{(A_5)},m^{(A_6)},m^{(A_7)},m^{(A_8)}\}.
\end{equation*}

\end{enumerate}
\end{thm}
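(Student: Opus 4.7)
The plan is to treat the eight matrices in two groups. First I identify $m^{(A_i)}$ explicitly for each $i$ and check its tightness via Lemma~\ref{iff condition of tightness for 3 terms}; second, in the non-tight regime $a>b$ for $A_1,\ldots,A_4$, I verify the alternating-sum formula for $\mathbf{C}_{A_i}$ using bar-invariance together with the canonical-basis criterion of Lemma~\ref{construction of cano basis for affine sln}. For the identification step, I would build $M(A_i)$ explicitly as an iterated generic extension $S_{j_1}^{(e_1)}*\cdots*S_{j_t}^{(e_t)}$ whose tight form has exactly three blocks, and read off $m^{(A_i)}$. For example $M(A_1)=aS_1\oplus bS_1[2]\oplus cS_1[3]$ is realised by the distinguished word $1^{a+b+c}2^{b+c}1^c$, giving $m^{(A_1)}=E_1^{(a+b+c)}E_2^{(b+c)}E_1^{(c)}$; the remaining seven cases follow by the $1\leftrightarrow 2$ and top$\leftrightarrow$socle symmetries and yield three-term monomials whose middle exponent is $b+c$ for $A_1,\ldots,A_4$ and $a+b+c$ for $A_5,\ldots,A_8$. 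The criterion $2y\geq x+z$ then collapses to $2a+b\geq 0$ (automatic) for $A_5,\ldots,A_8$, and to $b\geq a$ for $A_1,\ldots,A_4$, settling part~(1) and the ``if and only if'' half of part~(2).

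For the alternating-sum formula I focus on $A_1$ (the cases $A_2,A_3,A_4$ follow by the same symmetries) and consider the candidate
$$X\;:=\;\sum_{k=0}^c (-1)^{c-k}\begin{bmatrix} a-b-1+c-k \\ a-b-1 \end{bmatrix} m_1^{(k)}.$$
Bar-invariance is immediate since each $m_1^{(k)}$ is a bar-invariant monomial and each symmetric Gaussian coefficient is bar-invariant. Because $A_1^{(c)}=A_1$ is the unique $\preccurlyeq$-maximum among $\{A_1^{(k)}\}_{k=0}^c$, the triangular relation~\eqref{triangluarrelation} applied to each $m_1^{(k)}$ gives the coefficient of $\wit u_{A_1}$ in $X$ as $(-1)^0\begin{bmatrix}a-b-1\\a-b-1\end{bmatrix}=1$. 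By Lemma~\ref{construction of cano basis for affine sln}, it then suffices to show that every other coefficient of $X$ in the basis $\{\wit u_B\}_{B\in\Theta_\vartri^+(2)}$ lies in $v^{-1}\bbz[v^{-1}]$; once this is done, $X=\mathbf{C}_{A_1}$.

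To control these coefficients I would expand each $m_1^{(k)}=E_1^{(a+b+2c-k)}E_2^{(b+c)}E_1^{(k)}$ in the PBW basis via two applications of the twisted multiplication formula (Theorem~\ref{twisted-multip-theorem}): first compute $E_2^{(b+c)}E_1^{(k)}$, then left-multiply by $E_1^{(a+b+2c-k)}$. For a fixed target $B\in\Theta_\vartri^+(2)$ the resulting coefficient is an explicit product of symmetric Gaussian polynomials indexed by auxiliary matrices $T_1,T_2$ of prescribed row sums, weighted by $v^{f'_{A,T}+\delta(A)-\delta(B)}$. Collecting contributions across $0\leq k\leq c$, the total coefficient of $\wit u_B$ in $X$ becomes an alternating sum of precisely the form handled by Lemma~\ref{technic-lem-of-gauss-poly}, which collapses it to a single monomial of the shape $v^{-k\delta}\begin{bmatrix}m-k\\\delta\end{bmatrix}$ for appropriate parameters. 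One then verifies that the collapsed coefficient vanishes for every aperiodic $B\neq A_1$ and has strictly negative $v$-degree (hence lies in $v^{-1}\bbz[v^{-1}]$) for every periodic $B$. The main obstacle is precisely this combinatorial bookkeeping: aligning the summation variable $k$ with the parameter $\delta$ of Lemma~\ref{technic-lem-of-gauss-poly}, and carefully tracking the $v$-exponents through both applications of Theorem~\ref{twisted-multip-theorem} so that the resulting monomial in $v$ always has strictly negative degree for $B\neq A_1$.
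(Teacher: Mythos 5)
Your treatment of part (1) and of the ``if and only if'' half of part (2) — identifying $m^{(A_1)}=E_1^{(a+b+c)}E_2^{(b+c)}E_1^{(c)}$, its analogues for the other seven matrices via Lemma \ref{structure of aperiodic part of monomial basis}, and applying the criterion of Lemma \ref{iff condition of tightness for 3 terms} — is exactly the paper's argument, as is your recognition scheme for $X=\sum_k(-1)^{c-k}\bigl[\begin{smallmatrix}a-b-1+c-k\\a-b-1\end{smallmatrix}\bigr]m_1^{(k)}$ (bar-invariance, leading coefficient $1$ on $\wit u_{A_1}$, control of lower coefficients, then Lemma \ref{construction of cano basis for affine sln}, with the expansion done by Theorem \ref{twisted-multip-theorem} and the collapse by Lemma \ref{technic-lem-of-gauss-poly}). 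The gap is in what you propose to verify at the end: you claim the collapsed coefficient of $\wit u_B$ in $X$ \emph{vanishes} for every aperiodic $B\neq A_1$ and needs a negative-degree estimate only for periodic $B$. This has the roles reversed and, as stated, is false. Carrying out the computation, the coefficient of the aperiodic term $\wit u_{A_1^{(k)}}$ for $0\leqs k\leqs c-1$ collapses via Lemma \ref{technic-lem-of-gauss-poly} to
$$v^{-(c-k)(a-b+c-k)}\,\ol{\left[\!\!\left[\begin{matrix} b+c-k\\ c-k\end{matrix}\right]\!\!\right]},$$
which is nonzero and lies in $v^{-1}\bbz[v^{-1}]$ because $a>b$; similarly, in the degenerate case $b=0$ (which your outline never isolates) the aperiodic terms $\wit u_{A_3^{(k)}}$ occur with nonzero coefficients in $v^{-1}\bbz[v^{-1}]$. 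Had the aperiodic coefficients really vanished, the hypothesis $a>b$ would play no role in the verification, which should have been a warning sign: the nontrivial $v^{-1}\bbz[v^{-1}]$ estimate is exactly for the aperiodic lower terms.

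Conversely, your plan to check directly that every periodic coefficient of $X$ has strictly negative degree is unnecessary and substantially heavier than what Lemma \ref{construction of cano basis for affine sln} requires: that lemma allows the coefficients of periodic $\wit u_C$ to be arbitrary elements of $\bbz[v,v^{-1}]$ (they land in $v^{-1}\bbz[v^{-1}]$ only a posteriori, once $X=\mathbf{C}_{A_1}$ is established). The paper exploits precisely this to reduce the bookkeeping to the few aperiodic shapes of the matrices $C$ appearing in the expansion of $m_1^{(k)}$, namely $C=A_1^{(t_1)}$ and, when $b=0$, $C=A_3^{(t_3)}$, instead of all $C$. With the verification targets corrected — aperiodic lower coefficients must be shown to lie in $v^{-1}\bbz[v^{-1}]$ (they are generally nonzero), periodic coefficients need no estimate at all — your outline becomes the paper's proof; as written, the final step of your argument would fail.
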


\begin{proof}
  (1) By Lemma \ref{structure of aperiodic part of monomial basis}, we know $m^{(A_i)}(i=5,6,7,8)$ equals to
  \begin{equation*}
    E_1^{(b+c)}E_2^{(a+b+c)}E_1^{(c)},E_2^{(b+c)}E_1^{(a+b+c)}E_2^{(c)},E_1^{(c)}E_2^{(a+b+c)}E_1^{(b+c)},E_2^{(c)}E_1^{(a+b+c)}E_2^{(b+c)},
  \end{equation*}
which are tight by Lemma \ref{iff condition of tightness for 3 terms}.

  (2) We only deal with $A_1$, the others can be proved similarly. By Lemma \ref{structure of aperiodic part of monomial basis}, $m^{(A_1)}$ is
  corresponding to $E_1^{(a+b+c)}E_2^{(b+c)}E_1^{(c)}$, which is tight if and only if $a\leqs b$.

  From now on, suppose $a>b$. By Theorem \ref{twisted-multip-theorem}, for $0\leqs k\leqs c$, we have

  \begin{equation*}
    m_1^{(k)}=\wit{u}_{(a+b+2c-k)S_1}\wit{u}_{(b+c)S_2}\wit{u}_{kS_1}=\sum_{\stackrel{t_3\leqs t_1\leqs k}{t_2\leqs b+c-t_1}}
    v^{f}\begin{bmatrix}
     a+b+2c-t_1-t_2-t_3\\a+b+2c-k-t_2-t_3
   \end{bmatrix}\wit{u}_C,
  \end{equation*}
where $f=-(k-t_1)(b+c-t_1)+(a+b+2c-k-t_2-t_3)(t_2+t_3-b-c)+t_2(t_3-t_1)$ and

   \begin{equation*}
   C=\begin{pmatrix}
     \bf 0 & a+b+2c-t_1-t_2-t_3 & t_2 & t_3\\
     0 & \bf 0 & b+c-t_1-t_2 & t_1-t_3
   \end{pmatrix}.
 \end{equation*}

Then

\begin{equation*}
\begin{split}
  M(c)&=\sum_{k=0}^c(-1)^{c-k}\begin{bmatrix}
    a-b-1+c-k\\a-b-1
  \end{bmatrix}m^{(k)}_1\\
  &=\sum_{k=0}^c(-1)^{c-k}\begin{bmatrix}
    a-b-1+c-k\\a-b-1
  \end{bmatrix}\bigg(\sum_{\stackrel{t_3\leqs t_1\leqs k}{t_2\leqs b+c-t_1}}
    v^{f}\begin{bmatrix}
     a+b+2c-t_1-t_2-t_3\\a+b+2c-k-t_2-t_3
   \end{bmatrix}\wit{u}_C\bigg)\\
   &=\sum_{\substack{t_1=0\\t_3\leqs t_1\\t_2\leqs b+c-t_1}}^c\sum_{k=t_1}^c\bigg((-1)^{c-k}v^{f}\begin{bmatrix}
    a-b-1+c-k\\a-b-1
  \end{bmatrix}
\begin{bmatrix}
     a+b+2c-t_1-t_2-t_3\\a+b+2c-k-t_2-t_3
   \end{bmatrix}\bigg)\wit{u}_C\\
   &=\wit{u}_{A_1}+\sum_{\substack{t_1=0\\t_3\leqs t_1\\t_2\leqs b+c-t_1}}^{c-1}\bigg(\sum_{k=t_1}^c(-1)^{c-k}v^{f}\begin{bmatrix}
    a-b-1+c-k\\a-b-1
  \end{bmatrix}
\begin{bmatrix}
     a+b+2c-t_1-t_2-t_3\\a+b+2c-k-t_2-t_3
   \end{bmatrix}\bigg)\wit{u}_C.
\end{split}
\end{equation*}

By Lemma \ref{construction of cano basis for affine sln}, we only need to deal with those aperiodic matrices $C$. There are only four possible cases as follows

\begin{equation*}
  \begin{cases}
    a+b+2c=t_1+t_2+t_3\\
    t_2=0
  \end{cases},
  \begin{cases}
    a+b+2c=t_1+t_2+t_3\\
    t_1=t_3
  \end{cases},
  \begin{cases}
    b+c=t_1+t_2\\
    t_2=0
  \end{cases},
    \begin{cases}
    b+c=t_1+t_2\\
    t_1=t_3
  \end{cases}.
\end{equation*}

By definition, it is easy to see that the first two cases are impossible, and the third case appears unless $b=0$. Thus, under the fourth condition

\begin{equation*}
    \begin{cases}
    b+c=t_1+t_2\\
    t_1=t_3
  \end{cases}
\end{equation*}
and using the equivalent condition for degenerate order in \eqref{equivalent condition for degenerate order},
the aperiodic part of Hasse diagram of
$A_1=\begin{pmatrix}
  \bf 0 & a & b & c\\
  0 & \bf 0 & 0 & 0
\end{pmatrix}$ is the following

\begin{center}
\begin{tikzpicture}
\node [above] at (0,0) {$A_1^{(c)}$};
\node [above,right] at (1,-0.8) {$A_1^{(c-1)}$};
\node [above,left] at (-1,-0.8) {$A_3^{(c-1)}$};
\node [above,left] at (-2,-1.8) {$A_3^{(c-2)}$};\node [above,right] at (2,-1.8) {$A_1^{(c-2)}$};
\node [right] at (4,-4) {$A_1^{(0)}$};\node [left] at (-4,-4) {$A_3^{(0)}$};
\draw[fill] (0,0) circle [radius=0.025];\draw[fill] (1,-1) circle [radius=0.025];\draw[fill] (2,-2) circle [radius=0.025];
\draw[fill] (4,-4) circle [radius=0.025];
\draw[fill] (-1,-1) circle [radius=0.025];\draw[fill] (-2,-2) circle [radius=0.025];
\draw[fill] (-4,-4) circle [radius=0.025];
\draw (0,0)--(1,-1);\draw[red] (0,0)--(-1,-1);
\draw [red](-1,-1)--(-2,-2);\draw (1,-1)--(2,-2);
\draw [dotted,red](-2,-2)--(-4,-4);
\draw[dotted](2,-2)--(4,-4);
\end{tikzpicture}
\end{center}

Note that the red part appears unless $b=0$. If $b=0$ and \begin{equation*}
  \begin{cases}
    b+c=t_1+t_2\\
    t_2=0
   \end{cases},
\end{equation*}
then every coefficient of such $\wit{u}_{C}$ is
\begin{equation*}
\begin{split}
  &v^{(a+2c-k-t_3)(t_3-c)}\begin{bmatrix}
    a-1+c-k\\a-1
  \end{bmatrix}
\begin{bmatrix}
     a+c-t_3\\a+2c-k-t_3
   \end{bmatrix}~~(\text{for}~0\leqs k\leqs c,0\leqs t_3\leqs c-1)\\
  &=v^{-(a+2c-k-t_3)(c-t_3)-(c-k)(2c+1-k-t_3)}\ol{\left[\!\!\left[\begin{matrix} a-1+c-k\\ a-1\end{matrix}\right]\!\!\right]}
  \ol{\left[\!\!\left[\begin{matrix} a+c-t_3\\ a+2c-k-t_3\end{matrix}\right]\!\!\right]}\in v^{-1}\bbz[v^{-1}],
\end{split}
\end{equation*}
which means we only need to consider the coefficient of $\wit{u}_{A_1^{(c-1)}},\cdots,\wit{u}_{A_1^{(0)}}$.

Assume
\begin{equation*}
    \begin{cases}
    b+c=t_1+t_2\\
    t_1=t_3
  \end{cases},
\end{equation*}
then for $0\leqs t_1\leqs c-1$, the coefficient of $\wit{u}_{A_1^{(t_1)}}$ is

\begin{equation*}
\begin{split}
f(c,t_1)&=\sum_{k=t_1}^c(-1)^{c-k}v^{-(k-t_1)(b+c-t_1)}\begin{bmatrix}
    a-b-1+c-k\\a-b-1
  \end{bmatrix}
\begin{bmatrix}
     a+c-t_1\\a+c-k
   \end{bmatrix}\\
 &=\sum_{k'=0}^{c'}(-1)^{c'-k'}v^{-k'(b+c')}\begin{bmatrix}
    a-b-1+c'-k'\\a-b-1
  \end{bmatrix}
\begin{bmatrix}
     a+c'\\k'
   \end{bmatrix}~(k'=k-t_1,c'=c-t_1)\\
   &=v^{-c'(b+c')}\sum_{i=0}^{c'}(-1)^{i}v^{i(b+c')}\begin{bmatrix}
    a-b-1+i\\a-b-1
  \end{bmatrix}
\begin{bmatrix}
     a+c'\\c'-i
   \end{bmatrix}~(i=c'-k')
\end{split}
\end{equation*}

Let $k=a-b,m=a+c'$ and $\dz=c'$. Applying Lemma \ref{technic-lem-of-gauss-poly} gives

\begin{equation*}
f(c,t_1)=v^{-c'(b+c')}v^{-c'(a-b)}
   \begin{bmatrix}
     b+c'\\c'
   \end{bmatrix}=v^{-c'(a-b+c')}\ol{\left[\!\!\left[\begin{matrix} b+c'\\c'\end{matrix}\right]\!\!\right]}\in v^{-1}\bbz[v^{-1}],
\end{equation*}
since $a>b$. Hence, $M(c)=\wit{u}_{A_1}+\sum_{\stackrel{C_1\in\ci_{\prec A_1}}{C_1\in\ttz_{\vartri}^{ap}(2)}}v^{-1}\bbz[v^{-1}]\wit{u}_{C_1}+\sum_{\stackrel{C_2\in\ci_{\prec A_1}}{C_2\in\ttz_{\vartri}^{p}(2)}}\bbz[v,v^{-1}]\wit{u}_{C_2}$.
On the other hand, $\ol{M(c)}=M(c)$. Consequently, by Lemma \ref{construction of cano basis for affine sln}, $\mathbf{C}_{A_1}=M(c)$, as desired.
\end{proof}

\section{Relation to canonical bases of affine quantum Schur algebras}
Let $\fkS_{\vartri,r}$ be the group consisting of all permutations $w:\bbz\ra\bbz$ such that $w(i+r)=w(i)+r$ for $i\in\bbz$.
Define $s_i\in\fkS_{\vartri,r}$ by $s_i(j)=j$ for $j\not \equiv i,i+1(\mod r)$, $s_i(j)=j-1$ for $j\equiv i+1(\mod r)$
and $s_i(j)=j+1$ for $j\equiv i(\mod r)$. Let $\fkS_r=\lan s_i\mid 1\leqs i\leqs n\ran$ be the subgroup of $\fkS_{\vartri,r}$
and $\rho$ be the permutation of $\bbz$ sending $j$ to $j+1$ for all $j\in\bbz$. Note that each $w\in\fkS_{\vartri,r}$ can be
uniquely expressed as $w=\rho^ax$ with $a\in\bbz$ and $x\in \fkS_r$, and the length function $\ell$ on $\fkS_r$ can be extended to $\fkS_{\vartri,r}$
by setting $\ell(w)=\ell(x)$.

The extended affine Hecke algebra $\ch_\vartri(r)_\cz$ over $\cz$ associated to $\fkS_{\vartri,r}$ is the
unital $\cz$-algebra with basis $\{T_w\}_{w\in\fkS_{\vartri,r}}$, and multiplication defined by
\begin{equation*}
  \begin{cases}
    T_{s_i}^2=(v^2-1)T_{s_i}+v^2, &\text{for }1\leqs i\leqs r,\\
    T_wT_{w'}=T_{ww'}, &\text{if } \ell(ww')=\ell(w)+\ell(w').
  \end{cases}
\end{equation*}

Let $\bm\ch_\vartri(r)=\ch_\vartri(r)\otm_\cz\bbq(v)$. Denote by $\ch(\fkS_r)$ the $\cz$-subalgebra
of $\ch_{\vartri}(r)_\cz$ generated by $T_{s_i}$ for $1\leqs i\leqs r$. Let $\{C_w'\mid w\in \fkS_r\}$
be the canonical bases of $\ch(\fkS_r)$ defined in \cite{KazhdanLusztig1979repns}. For each $w\in \fkS_r$,
we have $C_w'=v^{-\ell(w)}\sum_{y\leqs w}P_{y,w}T_y$, where $P_{y,w}\in\bbz[v^2]$ is the Kazhidan-Lusztig polynomial.

For $\lz=(\lz_i)_{i\in\bbz}\in\bbz_{\vartri}^n,~A\in M_{\vartri,n}(\bbz)$, let
\begin{equation*}
  \sz(\lz)=\sum_{1\leqs i\leqs n}\lz_i\quad\text{and}\quad
  \sz(A)=\sum_{\substack{1\leqs i\leqs n\\j\in\bbz}}a_{i,j}=\sum_{\substack{1\leqs j\leqs n\\i\in\bbz}}a_{i,j}.
\end{equation*}

For $r\geqs 0$, we set
\begin{equation*}
  \llz_\vartri(n,r)=\{\lz\in\bbn_\vartri^n\mid \sz(\lz)=r\}~\text{and}~\ttz_\vartri(n,r)=\{A\in\ttz_\vartri(n)\mid \sz(A)=r\}.
\end{equation*}

For $\lz\in\llz_\vartri(n,r)$, let $\fkS_\lz=\fkS_{(\lz_1,\cdots,\lz_n)}$ be the corresponding standard Young subgroups of $\fkS_r$
and denote by $w_{0,\lz}$ the longest element in $\fkS_\lz$.
For each $\lz\in\llz_\vartri(n,r)$, let $x_\lz=\sum_{w\in\fkS_\lz}T_w\in\ch_\vartri(r)_\cz$. The endomorphism algebras
\begin{equation*}
  \cs_\vartri(n,r)_\cz=\End_{\ch_\vartri(r)_\cz}\bigg(\bps_{\lz\in\llz_\vartri(n,r)}x_\lz\ch_\vartri(r)_\cz\bigg)~\text{and}~
  \bm\cs_\vartri(n,r)_\cz=\End_{\bm\ch_\vartri(r)_\cz}\bigg(\bps_{\lz\in\llz_\vartri(n,r)}x_\lz\bm\ch_\vartri(r)\bigg)
\end{equation*}
are called {\em affine quantum Schur algebras}.(For details, cf. \cite{GinzburgVasserot1993langlands,Green1999affine,Lusztig1999aperiodicity}).

For $\lz\in\llz_\vartri(n,r)$, let $\scd_\lz^\vartri=\{d\mid d\in\fkS_{\vartri,r},\ell(wd)=\ell(w)+\ell(d)~\text{for}~w\in\fkS_\lz\}$ and
$\scd_{\lz,\mu}^\vartri=\scd_\lz^\vartri\cap{(\scd_{\mu}^\vartri)}^{-1}$. By \cite[7.4]{VaragnoloVasserot1999decomposition}(see also \cite[Lem 9.2]{DuFu2010modified}), there is a bijective map
\begin{equation*}
  \jmath_\vartri:\{(\lz,d,\mu)\mid d\in\scd_{\lz,\mu}^\vartri,\lz,\mu\in\llz_\vartri(n,r)\}\lra \ttz_\vartri(n,r),(\lz,d,\mu)\mapsto A=(|R_k^\lz\cap dR_l^\mu|)_{k,l\in\bbz},
\end{equation*}
where $$R_{i+kn}^\nu=\{\nu_{k,i-1}+1,\nu_{k,i-1}+2,\cdots,\nu_{k,i-1}+\nu_i=\nu_{k,i}\}~\text{with}~\nu_{k,i-1}=kr+\sum_{1\leqs t\leqs i-1}\nu_t,$$
for all $1\leqs i\leqs n,k\in\bbz$ and $\nu\in\llz_\vartri(n,r)$.

For $\lz,\mu\in\llz_\vartri(n,r)$ and $d\in\scd_{\lz,\mu}^\vartri$ satisfying $A=\jmath_\vartri(\lz,d,\mu)\in\ttz_\vartri(n,r)$,
define $e_A\in \cs_\vartri(n,r)_\cz$ by
\begin{equation*}
  e_A(x_\nu h)=\dz_{\mu\nu}\sum_{w\in\fkS_\lz d\fkS_\mu}T_wh,
\end{equation*}
where $\nu\in\llz_\vartri(n,r)$ and $h\in\ch_\vartri(n,r)_\cz$, and let
\begin{equation*}
  [A]=v^{-d_A}e_A,\quad\text{where}\quad d_A=\sum_{\substack{1\leqs i\leqs n\\i\geqs k,j<l}}a_{i,j}a_{k,l}.
\end{equation*}

For the geometrical description of $d_A$, see \cite{Lusztig1999aperiodicity}.
Note that the sets $\{e_A\mid A\in\ttz_\vartri(n,r)\}$ and $\{[A]\mid A\in\ttz_\vartri(n,r)\}$ form $\cz$-bases for $\cs_\vartri(n,r)_\cz$.

By definition, for $\lz\in\llz_\vartri(n,r)$ and $A\in\ttz_{\vartri}(n,r)$, we have
\begin{equation}\label{special equality}
  [A][\diag({\lz})]=
  \begin{cases}
    [A],&\text{if}~\lz=\col(A);\\
    0,&\text{otherwise}.
  \end{cases}\quad
  [\diag({\lz})][A]=
  \begin{cases}
    [A],&\text{if}~\lz=\row(A);\\
    0,&\text{otherwise}.
  \end{cases}
\end{equation}

Define the bar involution on $\cs_\vartri(n,r)_\cz$ via the one on the Hecke algebra. For $w=\rho^ax\in\fkS_{\vartri,r}$ with $a\in\bbz$ and $x\in \fkS_r$, let $C_w'=T_\rho^aC_x'$. Then $\{C_w'\mid x\in\fkS_{\vartri,r}\}$ forms a $\cz$-basis for $\ch_\vartri(r)_\cz$. Define bar involution on $\ch_\vartri(r)_\cz$ by $\bar{}:\ch_{\vartri}(r)_\cz\ra\ch_{\vartri}(r)_\cz,v\mapsto v^{-1},T_w\mapsto T_{w^{-1}}^{-1}$.
Following \cite{Du1992kahzdan} (or cf. \cite{VaragnoloVasserot1999decomposition}), the bar involution on $\cs_\vartri(n,r)_\cz$ can be described as follows:
\begin{equation}\label{bar-involution on affine q-schur alg}
  ^-:\cs_\vartri(n,r)_\cz\ra \cs_\vartri(n,r)_\cz,f\mapsto \bar{f},f\in\Hom_{\ch_\vartri(r)_\cz}(x_\mu\ch_\vartri(r)_\cz,x_\lz\ch_\vartri(r)_\cz),~h\in\ch_{\vartri}(r)_\cz,
\end{equation}
where $\bar{f}$ sends $v$ to $v^{-1}$ and $C_{w_0,\mu}'h$ to $\ol{f(C'_{w_{0,\mu}})}h$.

Let $$\ttz_\vartri^{\pm}(n)=\{A\in\ttz_\vartri(n)\mid a_{ii}=0~\text{for all}~i\}.$$

For $A\in\ttz_\vartri^\pm(n)$ and ${\bf j}\in\bbz_\vartri^n$, define elements in $\bm\cs_\vartri(n,r)$
\begin{equation*}
  A({\bf j},r)=\sum_{\mu\in\llz_\vartri(n,r-\sz(A))}v^{\mu\centerdot{\bf j}}[A+\diag(\mu)],
\end{equation*}
where $\mu\centerdot{\bf j}=\sum_{1\leqs i\leqs n}\mu_i{\bf j}_i$.
The set $\{A({\bf j},r)\}_{A\in\ttz_\vartri^\pm(n),{\bf j}\in\bbz_\vartri^n}$ spans $\bm\cs_\vartri(n,r)$.

Now we establish a triangular decomposition for $\cs_\vartri(n,r)_\cz$. Set the following $\cz$-submodules of $\cs_\vartri(n,r)_\cz$
\begin{equation*}
  \begin{split}
    \cs^+_\vartri(n,r)_\cz&={\rm span}_{\cz}\{A(0,r)\mid A\in\ttz_\vartri^+(n)\},\\
    \cs^-_\vartri(n,r)_\cz&={\rm span}_{\cz}\{A(0,r)\mid A\in\ttz_\vartri^-(n)\},\\
    \cs^0_\vartri(n,r)_\cz&={\rm span}_{\cz}\{[\diag(\lz)]\mid \lz\in\llz_\vartri(n,r)\}
  \end{split}
\end{equation*}
and $\ttz_\vartri^\pm(n)_{\leqs r}=\{A\in\ttz_\vartri^\pm(n)\mid \sz(A)\leqs r\}$.

\begin{thm}[{\cite[Prop 3.7.4~\& 3.7.7]{DengDuFu2012double}}]The elements $A(0,r)$(resp. $^tA(0,r)$) for $A\in\ttz_\vartri^+(n)_{\leqs r}$, form a $\cz$-basis of the subalgebra $\cs^+_\vartri(n,r)_\cz$(resp. $\cs^-_\vartri(n,r)_\cz$)
and $\{[\diag(\lz)]\}_{\lz\in\llz_{\vartri}(n,r)}$ form a $\cz$-basis
of the subalgebra $\cs^0_\vartri(n,r)_\cz$. In particular, we obtain a (weak) triangular decomposition:
\begin{equation*}
  \cs_\vartri(n,r)_\cz=\cs^+_\vartri(n,r)_\cz\cs^0_\vartri(n,r)_\cz\cs^-_\vartri(n,r)_\cz.
\end{equation*}
\end{thm}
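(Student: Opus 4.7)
Plan: The theorem has three components---(i) the basis statement for $\cs^0_\vartri(n,r)_\cz$, (ii) the basis statement for $\cs^\pm_\vartri(n,r)_\cz$, and (iii) the weak triangular decomposition---and I would handle them in that order.

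The first two assertions are proved by linear independence arguments lifted directly from the $\cz$-basis $\{[A]\}_{A\in\ttz_\vartri(n,r)}$ of $\cs_\vartri(n,r)_\cz$. For $\cs^0_\vartri(n,r)_\cz$: the elements $[\diag(\lz)]$ for $\lz\in\llz_\vartri(n,r)$ already sit inside the basis, hence are $\cz$-linearly independent, and the identities \eqref{special equality} specialize to $[\diag(\lz)][\diag(\mu)]=\dz_{\lz,\mu}[\diag(\lz)]$, which shows that their span is a subalgebra. For $\cs^+_\vartri(n,r)_\cz$: unfolding the definition gives $A(0,r)=\sum_{\mu\in\llz_\vartri(n,r-\sz(A))}[A+\diag(\mu)]$, and since the off-diagonal part of $A+\diag(\mu)$ recovers $A$, the families $\{A+\diag(\mu)\}_\mu$ attached to distinct $A\in\ttz_\vartri^+(n)_{\leqs r}$ are pairwise disjoint subsets of $\ttz_\vartri(n,r)$. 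Consequently the $A(0,r)$ are sums of pairwise disjoint basis vectors, hence $\cz$-linearly independent. The argument for ${}^tA(0,r)$ and $\cs^-_\vartri(n,r)_\cz$ is identical via transposition.

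For the triangular decomposition, the containment $\cs^+\cs^0\cs^-\subseteq\cs_\vartri(n,r)_\cz$ is trivial, so it suffices to show $[B]\in\cs^+\cs^0\cs^-$ for every $B\in\ttz_\vartri(n,r)$. The plan is to decompose $B=B^++B^0+B^-$ and examine the product
\[
P(B):=B^+(0,r)\cdot[\diag(\lz)]\cdot({}^tB^-)(0,r),
\]
for $\lz\in\llz_\vartri(n,r)$ chosen so that $[\diag(\lz)]$ bridges the column-sums produced by $B^+$ and the row-sums required by $B^-$. By the orthogonality relations \eqref{special equality}, the two outer sums collapse to a single product $[B^++\diag(\mu_0)]\cdot[B^-+\diag(\nu_0)]$ with $\mu_0,\nu_0$ uniquely determined by matching row/column sums. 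Applying the multiplication formula of Theorem \ref{twisted-multip-theorem} (transported from $\fkH_\vartri(n)$ to its affine Schur-algebra analogue as in \cite{DengDuFu2012double}) to this product should yield an expansion
\[
P(B)=v^{e(B)}[B]+\sum_{B'\prec B}h_{B',B}[B'],
\]
for some integer $e(B)$ and scalars $h_{B',B}\in\cz$. One then proves $[B]\in\cs^+\cs^0\cs^-$ by induction on the partial order $\preccurlyeq$: the minimal $B$ satisfy $P(B)=v^{e(B)}[B]$ outright, while for general $B$ the induction hypothesis places each $[B']$ in $\cs^+\cs^0\cs^-$, so solving for $[B]$ finishes the step.

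The main obstacle is justifying the leading-term form of $P(B)$---identifying $[B]$ as the leading term with the correct $v$-coefficient, and showing all other contributions lie strictly below $B$ in $\preccurlyeq$ so the induction closes. This requires a careful expansion of the multiplication formula tracking both the $v$-exponents $f'_{A,T}$ and the Gaussian-polynomial coefficients; the doubly-infinite periodic indexing of $M_{\vartri,n}(\bbz)$ and the bookkeeping of diagonal entries make this step technically delicate, although the strategy parallels the standard BLM-type argument in the finite-type setting. As a byproduct of the triangular decomposition, the subalgebra property of $\cs^+_\vartri(n,r)_\cz$ and $\cs^-_\vartri(n,r)_\cz$ follows from a direct inspection showing that a product $A(0,r)A'(0,r)$ with $A,A'\in\ttz_\vartri^+(n)_{\leqs r}$ remains supported on matrices whose strictly lower-triangular part vanishes.
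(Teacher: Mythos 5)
You should note at the outset that the paper offers no proof of this theorem: it is quoted verbatim from \cite[Prop 3.7.4 \& 3.7.7]{DengDuFu2012double}, so the only comparison available is with the argument in that book, which your outline does parallel in structure. The parts you carry out are fine: linear independence of the $A(0,r)$, $A\in\ttz_\vartri^+(n)_{\leqs r}$, follows because the supports $\{A+\diag(\mu)\mid \mu\in\llz_\vartri(n,r-\sz(A))\}$ are nonempty and pairwise disjoint (the off-diagonal part recovers $A$), and the statement for $\cs^0_\vartri(n,r)_\cz$ is immediate from the orthogonal idempotent relations coming from \eqref{special equality}.

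However, the two points where the actual content of the theorem lies are asserted rather than proved. First, the statement already calls $\cs^\pm_\vartri(n,r)_\cz$ subalgebras, and your closing claim that closure under multiplication ``follows from a direct inspection'' of the support of $A(0,r)A'(0,r)$ is not an argument: in the Hecke-algebra model the structure constants of $\cs_\vartri(n,r)_\cz$ are not visibly supported on upper-triangular matrices, and in the cited source this closure is extracted from explicit multiplication formulas in the affine Schur algebra (the Schur-algebra analogues of Theorem \ref{twisted-multip-theorem}, equivalently the fact that $\wit{u}_A\mapsto A(0,r)$ defines an algebra homomorphism from $\fkH_\vartri(n)$ onto $\cs^+_\vartri(n,r)_\cz$), which is genuine work that you have not transported. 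Second, the heart of the triangular decomposition is your displayed expansion $P(B)=v^{e(B)}[B]+\sum h_{B',B}[B']$, which you only say the multiplication formula ``should yield.'' To close the induction you must actually establish: (i) that the leading coefficient is a power of $v$, hence a unit of $\cz$, since otherwise you cannot solve for $[B]$ integrally; (ii) that all lower terms $B'$ have the same row and column sums as $B$, so that they satisfy $B'\sqsubset B$ and lie in a finite poset, making the induction terminate; and (iii) that they are strictly smaller under $\preccurlyeq$. This is precisely the key lemma of the cited proof and cannot be waved through; as it stands your proposal reduces the theorem to exactly the computation it omits. A minor notational slip in the same step: the element of $\cs^-_\vartri(n,r)_\cz$ attached to $B^-$ is $\sum_\mu[B^-+\diag(\mu)]$, i.e.\ $({}^tC)(0,r)$ with $C={}^tB^-\in\ttz_\vartri^+(n)$, whereas your $({}^tB^-)(0,r)$ denotes an element of $\cs^+_\vartri(n,r)_\cz$.
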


Recall ${\bm\fkH}^-_\vartri(n)$ is the opposite algebra of ${\bm\fkH}^+_\vartri(n)(={\bm\fkH}_\vartri(n))$, and sometimes for sake of clarity,
write $u^+_A(=u_A)\in {\bm\fkH}^+_\vartri(n)$ and $u^-_A\in {\bm\fkH}^-_\vartri(n)$ and for $A\in\ttz_\vartri^+(n)$,
$$\wit{u}_A^\pm=v^{\dz(A)}u_{A}^\pm=v^{\dim\End(M(A))-\dim M(A)}u_{A}^\pm.$$

The relationship between $\fkD_\vartri(n)$ and $\bm\cs_\vartri(n,r)$ can be seen from the following
(cf. \cite{GinzburgVasserot1993langlands,Lusztig1999aperiodicity,VaragnoloVasserot1999decomposition}).
See also \cite[Thm 3.6.3 \& 3.81]{DengDuFu2012double}.

\begin{thm}
  For $r\in \bbn$, the map $\zeta_r:\fkD_\vartri(n)\ra\bm\cs_\vartri(n,r)$ is a surjective algebra homomorphism such that for all ${\bf j}\in\bbz_\vartri^n$
  and $A\in\ttz_\vartri^+(n)$,
  $$\zeta_r(K^{\bf j})=0({\bf j},r),~\zeta_r(\wit{u}_A^+)=A({\bf 0},r)~\text{and}~\zeta_r(\wit{u}_A^-)=({}^tA)({\bf 0},r).$$
  Here ${}^tA$ is the transpose matrix of A. In particular, $\zeta_r(\fkH^\pm_\vartri(n))\twoheadrightarrow\cs^\pm_\vartri(n,r)_\cz$ is also surjective.
\end{thm}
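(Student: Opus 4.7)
The plan is to construct $\zeta_r$ by specifying its values on a generating set of $\fkD_\vartri(n)$, verify that all defining relations of the double Ringel-Hall algebra are respected, extend the computation of $\zeta_r(\wit u_A^\pm)$ from semisimple elements to general $A$ via the multiplication formulas, and finally establish surjectivity through the triangular decomposition of $\cs_\vartri(n,r)_\cz$.

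First I would define $\zeta_r$ on the Chevalley-type generators. On the Cartan part, set $\zeta_r(K^{\bf j}) := 0({\bf j},r)=\sum_{\mu\in\llz_\vartri(n,r)}v^{\mu\centerdot{\bf j}}[\diag(\mu)]$; using \eqref{special equality} one checks directly that these elements commute and satisfy $0({\bf j},r)\,0({\bf j}',r)=0({\bf j}+{\bf j}',r)$. On the Hall-algebra generators, set $\zeta_r(\wit u_{S_i}^+):=E^+_i({\bf 0},r)$ and $\zeta_r(\wit u_{S_i}^-):=E^-_i({\bf 0},r)$, where $E^\pm_i$ are the matrices with a single entry $1$ at positions $(i,i+1)$ and $(i+1,i)$ respectively. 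A direct application of \eqref{special equality} shows the cross-commutation $K^{\bf j}\,E^\pm_i({\bf 0},r)=v^{\pm\lan{\bf j},\bdim S_i\ran}E^\pm_i({\bf 0},r)\,K^{\bf j}$ required for the semidirect structure inside $\fkD_\vartri(n)^{\geqs 0}$ and $\fkD_\vartri(n)^{\leqs 0}$.

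Next I would extend the formula $\zeta_r(\wit u_A^+)=A({\bf 0},r)$ to arbitrary $A\in\ttz_\vartri^+(n)$. The strategy is to induct on $A$ using the generic extension order $\leqs_\dg$: by the triangular formula \eqref{triangluarrelation}, $\wit u_A^+$ lies in the $\cz$-subalgebra generated by the semisimple generators $\wit u_{S_\lz}^+$, so $\zeta_r(\wit u_A^+)$ is determined by $\zeta_r(\wit u_{S_\lz}^+)$. Applying the twisted multiplication formula from Theorem 6.3 on the Hall side and the parallel BLM-type multiplication formula on the Schur side (see \cite{DuFu2015quantum}) allows comparison of the expansion of $\wit u_\az\wit u_A^+$ with that of $\az({\bf 0},r)\,A({\bf 0},r)$. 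Matching the coefficients $v^{f'_{A,T}}\begin{bmatrix}\centerdot\\\centerdot\end{bmatrix}$ on both sides shows the desired equality holds for $A+T-\wih T^+$; inducting on $\sz(A)$ yields the formula for every $A$. The argument for $\wit u_A^-$ is symmetric, with transposition built in by definition.

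The hardest step will be verifying the Drinfeld-double (cross) relations that link $\fkH^+_\vartri(n)$ and $\fkH^-_\vartri(n)$ through the Hopf pairing $\psi$; these are the only relations which are not already visible in either triangular piece. Here the cleanest route is to reduce to simple generators, where the cross-relation takes the familiar form $\wit u_i^+\wit u_j^- - \wit u_j^-\wit u_i^+ = \dz_{ij}(K_i-K_i^{-1})/(v-v^{-1})$, and to verify the corresponding identity $E^+_i({\bf 0},r)E^-_j({\bf 0},r)-E^-_j({\bf 0},r)E^+_i({\bf 0},r)=\dz_{ij}(0(\bm 1_i,r)-0(-\bm 1_i,r))/(v-v^{-1})$ inside $\cs_\vartri(n,r)_\cz$ by a direct computation with $[A]$-basis products. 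Once all relations are checked, $\zeta_r$ is a well-defined algebra homomorphism. Surjectivity then follows immediately from the triangular decomposition $\cs_\vartri(n,r)_\cz=\cs^+_\vartri(n,r)_\cz\,\cs^0_\vartri(n,r)_\cz\,\cs^-_\vartri(n,r)_\cz$ of the previous theorem, since the images $\{A({\bf 0},r)\}_{A\in\ttz_\vartri^+(n)_{\leqs r}}$, $\{0({\bf j},r)\}_{\bf j}$ and $\{{}^tA({\bf 0},r)\}$ respectively span $\cs^+,\cs^0,\cs^-$ over $\cz$, giving the asserted sub-surjections $\zeta_r(\fkH_\vartri^\pm(n))\twoheadrightarrow\cs_\vartri^\pm(n,r)_\cz$ as a byproduct.
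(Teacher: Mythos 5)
The paper does not actually prove this theorem: it is quoted from the literature (Ginzburg--Vasserot, Lusztig, Varagnolo--Vasserot; see also \cite[Thm 3.6.3 \& 3.8.1]{DengDuFu2012double}), so there is no internal proof to compare against and your proposal has to stand on its own.

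Judged on its own, there is a genuine gap in the generators-and-relations strategy. You define $\zeta_r$ only on $K^{\bf j}$ and the simple elements $\wit{u}^\pm_{S_i}$ and propose to ``verify the defining relations''; but these elements generate only the double of the composition subalgebra $\fkC_\vartri(n)$, not $\fkD_\vartri(n)$. The full Hall algebra $\fkH_\vartri(n)$ needs all semisimple generators $u_{S_\lz}$, $\lz\in\bbn_\vartri^n$, and for this generating set there is no manageable presentation by relations against which a check could be run; moreover the Drinfeld-double cross relations are imposed for all pairs from $\bm\fkH^{\geqs0}_\vartri(n)$ and $\bm\fkH^{\leqs0}_\vartri(n)$, and reducing them to the commutator $[\wit{u}^+_i,\wit{u}^-_j]=\dz_{ij}(K_i-K_i^{-1})/(v-v^{-1})$ only treats the composition part. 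This is not a cosmetic issue: by Lusztig's aperiodicity theorem the image of the (double of the) composition algebra in $\bm\cs_\vartri(n,r)$ is a proper subalgebra in general, so a homomorphism built from the Chevalley-type generators alone could never be surjective, and your paragraph on extending to all $\wit{u}^+_A$ quietly presupposes that $\zeta_r$ is already defined on the $\wit{u}^\pm_{S_\lz}$, which your first step never provides. The parts of your sketch that do work are: defining $\zeta_r$ on each Borel half as the linear map $\wit{u}^+_A\mapsto A({\bf 0},r)$ (resp.\ $\wit{u}^-_A\mapsto({}^tA)({\bf 0},r)$) and checking multiplicativity by matching the Hall-side formula of Theorem \ref{twisted-multip-theorem} with Du--Fu's identical formula in $\cs_\vartri(n,r)_\cz$; and the deduction of surjectivity from the triangular decomposition, which is fine once the homomorphism exists on all of $\fkD_\vartri(n)$ (over $\bbq(v)$ the elements $0({\bf j},r)$ do span the diagonal part, and the integral statement $\zeta_r(\fkH^\pm_\vartri(n))\twoheadrightarrow\cs^\pm_\vartri(n,r)_\cz$ follows since the $A({\bf 0},r)$ form $\cz$-bases). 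What is missing, and what the cited sources supply by constructing the $\fkD_\vartri(n)$-action on $\bps_{\lz\in\llz_\vartri(n,r)}x_\lz\ch_\vartri(r)$ (or geometrically), is the compatibility of the two half-actions with the zero part and the skew Hopf pairing for the \emph{full} Hall algebra, i.e.\ the cross relations involving the semisimple (and hence central Heisenberg) generators; without that step the map on the double is not known to be well defined.
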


Now we recall the construction of canonical basis for affine quantum Schur algebras.

For $A,B\in \ttz_{\vartri}(n)$, define
\begin{equation}\label{partial order}
  B\sqsubseteq A~\text{if and only if}~B\preccurlyeq A,\col(B)=\col(A)~\text{and}~\row(B)=\row(A).
\end{equation}

Put $B\sqsubset A$ if $B\sqsubseteq A$ and $B\neq A$.

\begin{prop}[\cite{Lusztig1999aperiodicity,DuFu2014integral}]
  There exist canonical bases $\{\ttz_{A,r}\mid A\in\ttz_{\vartri}(n,r)\}$ for $\cs_\vartri(n,r)_\cz$ such that
  $\ol{\ttz_{A,r}}=\ttz_{A,r}$ and
  $$\ttz_{A,r}=[A]+\sum_{\stackrel{B\in\ttz_{\vartri}(n,r)}{B\sqsubset A}}g_{B,A,r}[B]\in [A]+\sum_{\stackrel{B\in\ttz_{\vartri}(n,r)}{B\sqsubset A}}v^{-1}\bbz[v^{-1}][B].$$
\end{prop}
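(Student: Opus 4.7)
The plan is to apply the standard Kazhdan--Lusztig construction (\cite[7.10]{Lusztig1990canonical} or its analogues). The existence and uniqueness of $\ttz_{A,r}$ with the required properties follow once two ingredients are in place: first, bar-invariance of the diagonal idempotents $[\diag(\lz)]$; and second, upper-unitriangularity of the bar involution on $\{[A]\}_{A\in\ttz_\vartri(n,r)}$ with respect to $\sqsubseteq$. The poset $\sqsubseteq$ restricted to any fixed pair $(\row(A),\col(A))$ is finite, since $\sz(B)=\sz(A)=r$, so no convergence issues arise.

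To obtain the first ingredient, I would use that $[\diag(\lz)]$ corresponds, under $\cs_\vartri(n,r)_\cz=\End_{\ch_\vartri(r)_\cz}(\bps_\mu x_\mu\ch_\vartri(r)_\cz)$, to the identity endomorphism of the summand $x_\lz\ch_\vartri(r)_\cz$. By \eqref{bar-involution on affine q-schur alg}, the identity is bar-invariant. Combining this with \eqref{special equality} and the fact that the bar involution is an algebra automorphism yields
$$\ol{[A]}=[\diag(\row(A))]\cdot\ol{[A]}\cdot[\diag(\col(A))],$$
so $\ol{[A]}$ lies in the span of those $[B]$ with $\row(B)=\row(A)$ and $\col(B)=\col(A)$.

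The second ingredient requires an explicit computation of $\ol{[A]}$. For $A=\jmath_\vartri(\lz,d,\mu)$, one has $e_A(x_\mu)=\sum_{w\in\fkS_\lz d\fkS_\mu}T_w$, and applying the bar involution via the Kazhdan--Lusztig basis $\{C'_w\}$ of $\ch_\vartri(r)_\cz$ and then re-expanding in $\{T_w\}$ leads to
$$\ol{[A]}=[A]+\sum_{B\sqsubset A}r_{B,A}[B],\qquad r_{B,A}\in\cz.$$
This triangularity is the core technical input, established geometrically by Lusztig in \cite{Lusztig1999aperiodicity} and algebraically by Du--Fu in \cite{DuFu2014integral}; together with the weight constraint from the first ingredient, it pins down the support of $\ol{[A]}$ inside $\{B\sqsubseteq A\}$.

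Once both ingredients are in hand, I would apply the standard Lusztig lemma on unitriangular matrices over $\cz$ with $v^{-1}\bbz[v^{-1}]$ off-diagonal entries: induct on $\sqsubseteq$ to construct $\ttz_{A,r}=[A]+\sum_{B\sqsubset A}g_{B,A,r}[B]$ with $\ol{\ttz_{A,r}}=\ttz_{A,r}$ and $g_{B,A,r}\in v^{-1}\bbz[v^{-1}]$, by solving the resulting finite linear system level by level. The main obstacle is the triangularity step, where one must match the support of $\ol{[A]}$ against the Bruhat-type order $\preccurlyeq$ on $\ttz_\vartri(n,r)$; all the substantive work of \cite{Lusztig1999aperiodicity,DuFu2014integral} enters precisely here, while the final Lusztig-lemma step is then purely formal.
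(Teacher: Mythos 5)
The paper itself gives no proof of this proposition: it is quoted directly from \cite{Lusztig1999aperiodicity,DuFu2014integral}, and your outline follows the same standard route as the algebraic construction in \cite{DuFu2014integral} (bar involution transported from $\ch_\vartri(r)_\cz$, the weight constraint coming from the bar-invariant idempotents $[\diag(\lz)]$, triangularity of $\ol{[A]}$ with respect to $\sqsubseteq$, then the elementary Lusztig/IC-basis lemma), with the genuinely substantive triangularity input correctly attributed to the cited works. One correction is needed: your finiteness justification is wrong as stated, since in the affine case the set of $B\in\ttz_\vartri(n,r)$ with $\row(B)=\row(A)$ and $\col(B)=\col(A)$ is infinite (it is indexed by the infinitely many double cosets $\fkS_\lz\backslash\fkS_{\vartri,r}/\fkS_\mu$); what the inductive construction actually requires, and what is true, is that each lower interval $\{B\mid B\sqsubseteq A\}$ is finite, because $\sz_{i,j}(A)=0$ for $j-i$ large forces any $B\preccurlyeq A$ to have support in the same band around the diagonal, and only finitely many such $B$ have $\sz(B)=r$. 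With that repaired, the final Lusztig-lemma step goes through as you describe.
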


Note that Lai and Luo \cite{LL} gave a matrix algorithm for those $[A]$, which can be used to construct the canonical basis for $\cs_\vartri(n,r)_\cz$.

Recall in Theorem \ref{compareofcanobasis}, we have introduced the canonical basis $\{\mathbf{c}_A\}_{A\in\ttz_\vartri^+(n)}$ of $\fkH_\vartri^+(n)$.
\begin{lem}\label{part-relation-of-cano-basis-lemma}
  For any $r>0$, set $\fkc_A=\zeta_r(\mathbf{c}_A)$ for all $A\in\ttz_\vartri^+(n,r)$. Then $\{\fkc_A\}_{A\in\ttz_\vartri^+(n)_{\leqs r}}$ forms a $\cz$-basis
  for $\cs^+_\vartri(n,r)_\cz$ which satisfies the following properties:
  \begin{equation*}
    \ol{\fkc_A}=\fkc_A\quad\text{and}\quad \fkc_A-A(0,r)\in\sum_{B\prec A}v^{-1}\bbz[v^{-1}]B(0,r).
  \end{equation*}
  In other words, this is the canonical basis relative to the basis $\{A(0,r)\mid A\in\ttz_\vartri^+(n)_{\leqs r}\}$ and the
  restrictions of the bar involution \eqref{bar-involution on affine q-schur alg} of $\cs_\vartri(n,r)_\cz$ and $\preccurlyeq$. Moreover, we have
  \begin{equation*}\zeta_r(\mathbf{c}_A)=
    \begin{cases}
      \fkc_A,&\text{if}~A\in\ttz_\vartri^+(n)_{\leqs r},\\
      0,&\text{otherwise}.
    \end{cases}
  \end{equation*}
  A similar result holds for $\cs_\vartri^-(n,r)_\cz$.
\end{lem}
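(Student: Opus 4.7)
The plan is to transfer the canonical basis $\{\mathbf{c}_A\}_{A\in\ttz_\vartri^+(n)}$ of $\fkH^+_\vartri(n)$ to $\cs^+_\vartri(n,r)_\cz$ via $\zeta_r$ and verify that the image satisfies the uniqueness properties characterising the canonical basis relative to $\{A(0,r)\mid A\in\ttz_\vartri^+(n)_{\leqs r}\}$ and $\preccurlyeq$.

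First, I would recall from Theorem \ref{compareofcanobasis} (together with the construction reviewed in Lemma \ref{another const for hall alg}) that the canonical basis element has a bar-invariant, triangular expansion
\begin{equation*}
\mathbf{c}_A=\wit{u}_A^+ + \sum_{\substack{B\prec A\\ \bdim M(B)=\bdim M(A)}} \pi_{B,A}\,\wit{u}_B^+,\qquad \pi_{B,A}\in v^{-1}\bbz[v^{-1}].
\end{equation*}
Every $B$ occurring in this sum satisfies $\sz(B)=\sz(A)$ since $\bdim M(B)=\bdim M(A)$. Now apply $\zeta_r$, using $\zeta_r(\wit{u}_B^+)=B(0,r)$ and the elementary observation that $B(0,r)=\sum_{\mu\in\llz_\vartri(n,r-\sz(B))}[B+\diag(\mu)]$ vanishes whenever $\sz(B)>r$ (the index set is empty). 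Therefore, if $\sz(A)>r$ then every term in the expansion dies and $\zeta_r(\mathbf{c}_A)=0$; if $\sz(A)\leqs r$ then $\sz(B)\leqs r$ for every $B$ appearing, so
\begin{equation*}
\fkc_A=A(0,r)+\sum_{\substack{B\prec A\\B\in\ttz_\vartri^+(n)_{\leqs r}}}\pi_{B,A}\,B(0,r),\qquad \pi_{B,A}\in v^{-1}\bbz[v^{-1}].
\end{equation*}
This already gives the required triangular shape and the dichotomy for $\zeta_r(\mathbf{c}_A)$.

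Next I would verify bar-invariance. Both $\fkH^+_\vartri(n)$ and $\cs_\vartri(n,r)_\cz$ are generated (the former by divided powers $u_i^{(m)}$ via Theorem \ref{isomorphism theorem for composition algebra}, the latter through the images of these under $\zeta_r$ within $\cs^+_\vartri(n,r)_\cz$) by elements that are fixed by the respective bar involutions; hence $\zeta_r$ intertwines the two bar involutions. Since $\ol{\mathbf{c}_A}=\mathbf{c}_A$, we conclude $\ol{\fkc_A}=\zeta_r(\ol{\mathbf{c}_A})=\fkc_A$. Together with the $v^{-1}\bbz[v^{-1}]$-triangularity just established, the standard uniqueness argument (cf.\ the system recalled before Theorem \ref{compareofcanobasis}) identifies $\fkc_A$ with the canonical basis element of $\cs^+_\vartri(n,r)_\cz$ attached to $A$; in particular, the $\fkc_A$ are pairwise distinct and the change-of-basis matrix to $\{A(0,r)\}$ is unitriangular, which shows $\{\fkc_A\}_{A\in\ttz_\vartri^+(n)_{\leqs r}}$ is a $\cz$-basis of $\cs^+_\vartri(n,r)_\cz$. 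The argument for $\cs_\vartri^-(n,r)_\cz$ is formally identical via the transpose isomorphism $\wit{u}_A^-\mapsto {}^tA(0,r)$.

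The main obstacle is verifying the compatibility of bar involutions, since the bar on $\cs_\vartri(n,r)_\cz$ is defined via the Hecke algebra \eqref{bar-involution on affine q-schur alg} while the bar on $\fkH^+_\vartri(n)$ is characterised by $\ol{m^{(A)}}=m^{(A)}$. The cleanest route is to reduce to the Chevalley generators: the composition subalgebra $\fkC_\vartri(n)$ is generated by the bar-invariant divided powers $u_i^{(m)}$ (Theorem \ref{isomorphism theorem for composition algebra}), and $\zeta_r(u_i^{(m)})$ corresponds under $\fkC_\vartri(n)\iso U_\cz^+\twoheadrightarrow \cs^+_\vartri(n,r)_\cz$ to the divided power $E_i^{(m)}$, which is bar-invariant in $\cs_\vartri(n,r)_\cz$ (a standard fact going back to Du \cite{Du1992kahzdan}, see also \cite{VaragnoloVasserot1999decomposition}). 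Once the two bar involutions agree on generators of the image of $\fkH^+_\vartri(n)$ under $\zeta_r$, they agree on the whole image, and the proof is complete.
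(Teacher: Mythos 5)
Your overall strategy---apply $\zeta_r$ to the expansion $\mathbf{c}_A=\wit{u}_A+\sum_{B\prec A}v^{-1}\bbz[v^{-1}]\wit{u}_B$ from Lemma \ref{another const for hall alg}, note that $B(0,r)=0$ whenever $\sz(B)>r$, and conclude by the uniqueness characterization of canonical bases---is exactly the paper's proof (the paper is terser and does not even spell out the vanishing observation). The genuine problem is the step you yourself single out as the main obstacle: compatibility of $\zeta_r$ with the two bar involutions. You reduce it to the Chevalley generators $u_i^{(m)}$, but these generate only the composition subalgebra $\fkC_\vartri(n)\cong U_\cz^+$, whereas the lemma concerns $\mathbf{c}_A$ for \emph{all} $A\in\ttz_\vartri^+(n)_{\leqs r}$, including periodic $A$, and for periodic $A$ one has $\mathbf{c}_A\notin\fkC_\vartri(n)$. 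Moreover, the implicit claim that the images $\zeta_r(E_i^{(m)})$ generate the image of $\fkH^+_\vartri(n)$, i.e.\ all of $\cs^+_\vartri(n,r)_\cz$, is false in general: by Lusztig's aperiodicity theorem, $\zeta_r(U_\cz^+)$ is spanned by the images of the aperiodic canonical basis elements and is a proper subalgebra of $\cs^+_\vartri(n,r)_\cz$ as soon as $r\geqs n$ (this is precisely why the paper works with the full Hall algebra and its sincere semisimple generators rather than with $\fkC_\vartri(n)$). Hence your argument does not yield $\ol{\fkc_A}=\fkc_A$ for the periodic indices, and the "agree on the whole image" conclusion does not follow from what you check.

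What is actually needed is bar-equivariance of $\zeta_r$ on all of $\fkH^+_\vartri(n)$, which (since both bar maps are semilinear ring involutions) reduces to checking it on the full generating set of the Hall algebra, namely the semisimple classes $\wit{u}_\lz$ for \emph{all} $\lz\in\bbn_\vartri^n$, sincere ones included; equivalently, one must know that the elements $\zeta_r(\wit{u}_\lz^+)$ are fixed by the bar involution \eqref{bar-involution on affine q-schur alg} of $\cs_\vartri(n,r)_\cz$. This is available in the literature (it is implicit in Varagnolo--Vasserot and in Du--Fu, and the paper silently uses it when it writes that applying $\zeta_r$ gives $\ol{\fkc_A}=\fkc_A$), but it is a strictly stronger input than bar-invariance of the $\zeta_r(E_i^{(m)})$, so as written your justification has a gap at exactly this point. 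The remaining parts of your proposal (triangularity, the dichotomy $\zeta_r(\mathbf{c}_A)=0$ for $\sz(A)>r$, the unitriangular change of basis, and the transpose argument for $\cs^-_\vartri(n,r)_\cz$) coincide with, and usefully flesh out, the paper's argument.
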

\begin{proof}
  By Lemma \ref{another const for hall alg}, $\mathbf{c}_A=\wit{u}_{A}+\sum_{B\prec A}v^{-1}\bbz[v^{-1}]\wit{u}_{B}$ for any $A\in\ttz_\vartri^+(n)_{\bf d},{\bf d}\in\bbn^n_\vartri$. Applying $\zeta_r$ on both sides, we obtain
    \begin{equation*}
    \ol{\fkc_A}=\fkc_A\quad\text{and}\quad \fkc_A-A(0,r)\in\sum_{B\prec A}v^{-1}\bbz[v^{-1}]B(0,r).
  \end{equation*}

  From the ingredients of canonical basis, it is the required canonical basis.
\end{proof}

For $A\in\ttz_\vartri(n)$, define the ``hook sums''
\begin{equation*}
  \fkh_i(A)=a_{i,i}+\sum_{i<j}(a_{i,j}+a_{j,i})\quad\text{and}\quad \bm\fkh(A)=(\fkh_1(A),\cdots,\fkh_n(A))
\end{equation*}

If we write $A=A^++A^0+A^-$, where $A^\pm\in\ttz_\vartri^\pm(n)$ and $A^0$ is diagonal, then
\begin{equation*}
  \bm\fkh(A)=(a_{1,1},\cdots,a_{n,n})+\col(A^-)+\row(A^+).
\end{equation*}

Recall $\bm\fkh(A)\leqs \lz\llra \fkh_i\leqs \lz_i,\forall i.$

For $A\in\ttz_\vartri^+(n)$ and $\lz\in\llz(n,r)$ with $\bm\fkh(A)=\row(A)\leqs \lz$, let
$$A_\lz=A+\diag(\lz-\bm\fkh(A)).$$

By \eqref{special equality}, it is clear that $$A(0,r)[\diag(\lz)]=\sum_{\mu\in\llz_\vartri(n,r-\sz(A))}[A+\diag(\mu)][\diag(\lz)]=[A+\diag(\lz-\col(A))]$$
and $$[\diag(\lz)]A(0,r)=\sum_{\mu\in\llz_\vartri(n,r-\sz(A))}[\diag(\lz)][A+\diag(\mu)]=[A_\lz].$$

We have the following result similar to \cite[Thm 8.3]{DuParashall2002linear}.
\begin{thm}
  For $A\in\ttz_\vartri^+(n)$, if $\sz(A)\leqs r$, then $\fkc_A=\sum_{\bm\fkh(A)\leqs \lz}\ttz_{A_\lz,r}$. In other word, the image $\fkc_A$
  of the canonical basis elements $\mathbf{c}_A\in\fkH^+_\vartri(n)$ under $\zeta_r$ is either zero or a sum of the canonical basis elements $\ttz_{A_\lz,r}=[\diag(\lz)]\mathbf{c}_A\in\cs_\vartri(n,r)_\cz(\bm\fkh(A)\leqs r)$.
\end{thm}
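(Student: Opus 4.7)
The plan is to establish the stronger pointwise identity $\ttz_{A_\lz,r}=[\diag(\lz)]\fkc_A$ for every $\lz\in\llz_\vartri(n,r)$ with $\lz\geqs\bm\fkh(A)$, and then sum over $\lz$ using the completeness relation $\sum_{\lz\in\llz_\vartri(n,r)}[\diag(\lz)]=1$ in $\cs_\vartri(n,r)_\cz$.

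For the pointwise identity, I would invoke the uniqueness of the canonical basis element $\ttz_{A_\lz,r}$, which is characterized by bar-invariance together with an expansion $[A_\lz]+\sum_{B'\sqsubset A_\lz}v^{-1}\bbz[v^{-1}][B']$. Bar-invariance of $[\diag(\lz)]\fkc_A$ follows from the bar-invariance of $\fkc_A$ (Lemma \ref{part-relation-of-cano-basis-lemma}) together with that of the idempotent $[\diag(\lz)]=\ttz_{\diag(\lz),r}$. To obtain the triangular expansion, I would start from $\fkc_A=A(0,r)+\sum_{B\prec A,\,B\in\ttz_\vartri^+(n)_{\leqs r}}c_B B(0,r)$ with $c_B\in v^{-1}\bbz[v^{-1}]$ supplied by Lemma \ref{part-relation-of-cano-basis-lemma}, and apply \eqref{special equality}: one has $[\diag(\lz)]B(0,r)=[B_\lz]$ when $\lz\geqs\bm\fkh(B)$ and $0$ otherwise. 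This gives
\begin{equation*}
[\diag(\lz)]\fkc_A=[A_\lz]+\sum_{\stackrel{B\prec A,\,B\in\ttz_\vartri^+(n)_{\leqs r}}{\lz\geqs\bm\fkh(B)}}c_B[B_\lz].
\end{equation*}

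The main step is to verify $B_\lz\sqsubset A_\lz$ for each $B\prec A$ appearing. Because $\sz_{i,j}(\diag(\nu))=0$ for $i\neq j$, the relation $\preccurlyeq$ is insensitive to diagonal summands, so $B_\lz\preccurlyeq A_\lz\Leftrightarrow B\preccurlyeq A$, which holds. The row equality $\row(B_\lz)=\lz=\row(A_\lz)$ is built into the definitions. The column equality $\col(B_\lz)=\col(A_\lz)$ reduces, via $\col(B_\lz)=\col(B)+\lz-\row(B)$ (and likewise for $A_\lz$), to $\col(B)-\row(B)=\col(A)-\row(A)$. I would deduce this from the identity, valid for any $A\in\ttz_\vartri^+(n)$,
\begin{equation*}
(\col(A)-\row(A))_k=\sz_{k-1,k}(A)-\sz_{k,k+1}(A)=(\bdim M(A))_{k-1}-(\bdim M(A))_k,
\end{equation*}
where the second equality is read off from $(\bdim M(A))_k=\sz_{k,k+1}(A)$, which in turn comes from tracking the composition factors of the indecomposables $S_i[l]$. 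Since $B\prec A$ forces $\bdim M(B)=\bdim M(A)$ by \eqref{equivalent condition for degenerate order}, the column equality follows, and uniqueness of the canonical basis then delivers $[\diag(\lz)]\fkc_A=\ttz_{A_\lz,r}$. I expect this column-equality check, built on the $\bdim$-to-$\sz$ translation above, to be the main obstacle of the proof.

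To assemble the final identity, I would note that $[\diag(\lz)]\fkc_A=0$ whenever $\lz\not\geqs\bm\fkh(A)$: the same $\sz$-manipulation gives $\row(A)_k=\sz_{k,k+1}(A)-\sz_{k-1,k+1}(A)$, and combining $\sz_{k,k+1}(B)=\sz_{k,k+1}(A)$ with $\sz_{k-1,k+1}(B)\leqs\sz_{k-1,k+1}(A)$ yields $\row(B)\geqs\row(A)$, so every summand $[B_\lz]$ in the displayed expansion vanishes as soon as $\lz\not\geqs\bm\fkh(A)=\row(A)$. Therefore
\begin{equation*}
\sum_{\lz\geqs\bm\fkh(A)}\ttz_{A_\lz,r}=\sum_{\lz\in\llz_\vartri(n,r)}[\diag(\lz)]\fkc_A=\Big(\sum_{\lz\in\llz_\vartri(n,r)}[\diag(\lz)]\Big)\fkc_A=\fkc_A,
\end{equation*}
which completes the proof.
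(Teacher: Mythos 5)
Your proposal is correct and takes essentially the same route as the paper: it proves the pointwise identity $[\diag(\lz)]\fkc_A=\ttz_{A_\lz,r}$ via bar-invariance of $[\diag(\lz)]\fkc_A$, a triangular expansion in the $[B_\lz]$ with $B_\lz\sqsubset A_\lz$, and uniqueness of the canonical basis, and then sums the idempotents $[\diag(\lz)]$ (your write-up in fact fills in the steps the paper dismisses with ``easy to see'' and ``the result follows'', namely the column-equality check and the vanishing of $[\diag(\lz)]\fkc_A$ for $\lz\not\geqs\bm\fkh(A)$). One small slip: $\bdim M(B)=\bdim M(A)$ is not a consequence of $B\prec A$ via \eqref{equivalent condition for degenerate order} alone, but of the homogeneity of $\mathbf{c}_A$ (the expansion in Lemma \ref{part-relation-of-cano-basis-lemma} runs over $\ttz_\vartri^+(n)_{\bf d}$), which is what your argument actually needs and which does hold.
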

\begin{proof}By Lemma \ref{part-relation-of-cano-basis-lemma} and \cite[Lem 7.2]{DuFu2014integral},
we have $\ol{\fkc_A}=\fkc_A$ and $\ol{[\diag{\lz}]}=[\diag(\lz)]$, thus, $\ol{[\diag(\lz)]\fkc_A}=[\diag(\lz)]\fkc_A$.

  By definition, $\fkc_A=A(0,r)+\sum_{B\prec A}t_{B,A}B(0,r)$, where $t_{B,A}\in v^{-1}\bbz[v^{-1}]$, it follows that
  \begin{equation*}
  \begin{split}
    [\diag(\lz)]\fkc_A&=[\diag(\lz)]A(0,r)+\sum_{B\prec A}t_{B,A}[\diag(\lz)]B(0,r)\\
    &=[A_\lz]+\sum_{B\prec A}t_{B,A}[B_{\lz}].
  \end{split}
  \end{equation*}

Easy to see, $B\prec A$ implies $B_\lz\prec A_{\lz}$. Also, $\row(A_\lz)=\row(B_\lz)=\lz$. Thus, if $\mu=\col(A_\lz)$,
$[\diag(\lz)]\fkc_A=\fkc_A[\diag(\mu)]$. Consequently, we obtain
  \begin{equation*}
  \begin{split}
    [\diag(\lz)]\fkc_A=[A_\lz]+\sum_{B\sqsubset A}t_{B,A}[B_{\lz}],
  \end{split}
  \end{equation*}
where $\sqsubset$ is the partial order relation on $\ttz_\vartri(n)$ defined in \eqref{partial order}.
Now, by the uniqueness of canonical basis relative to the basis $\{[A]\}_{A\in\ttz_\vartri(n)}$,
we must have $\ttz_{A_\lz,r}=[\diag(\lz)]\fkc_A$ and the result follows.
\end{proof}

\section{Appendix: Tightness of monomial basis via quadratic form}
In this appendix, we briefly review the criterion for tight monomials of quantum groups $\U^+$
associated to a quiver $(Q,\sz)$ with automorphism $\sz$ in \cite{DengDu2010tight} or \cite{Reineke2001monomials},
based on which we prove Lemma \ref{iff condition of tightness for 3 terms}.

Let $(Q,\sz)$ be a quiver $Q=(Q_0,Q_1)$ with automorphism $\sz$ over $k=\ol{\mathbb{F}}_q$ and assume $Q$ contains no oriented cycles.
Let $F_{Q,\sz,q}$ be the Frobenius morphism on the path algebra $A=kQ$ defined by
\begin{equation*}
  F=F_{Q.\sz,q}:A\ra A,~\sum_{s}x_sp_s\mapsto \sum_sx_s^q\sz(p_s),
\end{equation*}
where $\sum_{s}x_sp_s$ is a $k$-linear combination of path $p_s$. Thus, we have an $\mathbb{F}_q$-algebra
\begin{equation*}
  A^F=\{a\in A\mid F(a)=a\},
\end{equation*}
which is a finite dimensional hereditary $\mathbb{F}_q$-algebra.

Let $I$ be the set of isoclasses of simple modules in $A^F$-module. The Grothendieck group $K_0(A^F)$ of $A^F$ is then identified
with the free abelian group $\bbz I$ with basis $I$. Given a module $M$ in $A^F$-mod, we denote by $\bdim M$ the image of $M$ in
$K_0(A^F)$.

The Euler form $\lan-,-\ran:\bbz I\times\bbz I\lra\bbz$ associated with $(Q,\sz)$ is defined by
\begin{equation*}
  \lan {\bdim M,\bdim N}\ran=\dim_{\mathbb{F}_q}\Hom_{A^F}(M,N)-\dim_{\mathbb{F}_q} \Ext^1_{A^F}(M,N),
\end{equation*}
for $M,N\in A^F$-mod. Then the symmetric bilinear form on $\bbz I$ is defined by
\begin{equation*}
  \bm x\cdot \bm y=\lan \bm x,\bm y\ran+\lan \bm y,\bm x\ran,~\text{for}~\bm x,\bm y\in\bbz I.
\end{equation*}

Moreover, there exist unique Cartan datum $(I,\cdot)$ and generalized Cartan matrix $C_{Q,\sz}$ associated with $(Q,\sz)$(see \cite[1.1.1]{Lusztig1993introduction}),
and assume $\U^+$ is the quantum algebra associated with $(I,\cdot)$, which is generated by $E_i,i\in I$ and subjects to Serre relations.
By \cite{Lusztig1990canonical} or \cite{Kashiwara1991crystal}, there exist canonical bases for $\U^+$.

\begin{defn}
  For any fixed $\bm i=(i_1,i_2,\cdots,i_t)\in I^t$ and $\bm a=(a_1,a_2,\cdots,a_t)\in\bbn^t$, let $\scm_{\bm i,\bm a}$ be
  the set of $t\times t$ matrices $A=(a_{rm})$ with entries $a_{rm}$ in $\bbn$ satisfying the condition $\row(A)=\col(A)=\bm a$
  and $a_{rm}=0$ unless $i_r=i_m$. Define a quadratic form $\mathbbm{q}:\scm_{\bm i,\bm a}\ra \bbz$ by setting
  \begin{equation*}
    \mathbbm{q}(A)=\sum_{\substack{1\leqs m\leqs t\\1\leqs p<r\leqs t}}\lan i_m,i_m\ran a_{pm}a_{rm}
    +\sum_{\substack{1\leqs p<r\leqs t\\1\leqs l<m\leqs t}}(i_m\cdot i_m)a_{pm}a_{rl}
    +\sum_{\substack{1\leqs r\leqs t\\1\leqs l<m\leqs t}}\lan i_r,i_r\ran a_{rm}a_{rl},
  \end{equation*}
  for all $A\in \scm_{\bm i,\bm a}$.
\end{defn}

\begin{thm}[{\cite[Thm 2.5]{DengDu2010tight}}]
  Let $\U^+$ be the quantum algebra associated with a Cartan datum $(I,\cdot)$.
  For $\bm i=(i_1,i_2,\cdots,i_t)\in I^t$ and $\bm a=(a_1,a_2,\cdots,a_t)\in\bbn^t$, the monomial $E_{\bm i}^{(\bm a)}$ is
  tight(or a canonical basis element) if and only if $\mathbbm{q}(A)<0$ for all $A\in \scm_{\bm i,\bm a}\setminus\{D_{\bm a}\}$, where $D_{\bm a}=\diag(a_1,\cdots,a_t)$.
\end{thm}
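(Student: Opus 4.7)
The plan is to prove the criterion by passing to a Hall algebra realization of $\U^+$ and extracting the $v$-degree of each coefficient via the combinatorics of filtrations. Under Ringel's isomorphism $\U^+ \cong \bm\fkC(Q,\sz)$ (the composition subalgebra of the generic twisted Ringel-Hall algebra of $(Q,\sz)$), the divided power $E_i^{(a)}$ corresponds to $\wit{u}_{aS_i}$, so the monomial $E_{\bm i}^{(\bm a)}$ becomes the product $\wit{u}_{a_1 S_{i_1}}\cdots \wit{u}_{a_t S_{i_t}}$ inside $\bm\fkH(Q,\sz)$.

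The first step is to expand this product: iterating the twisted Hall multiplication yields
$$E_{\bm i}^{(\bm a)} = \sum_{[M]} \gamma_M(v)\,\wit{u}_M,$$
where $\gamma_M(v)$ is a $v$-weighted count of filtrations $M=M_0\supseteq M_1\supseteq\cdots\supseteq M_t=0$ with $M_{m-1}/M_m\cong a_m S_{i_m}$. The second step is to refine this count by matrices: choosing such a filtration, together with a compatible decomposition of each semisimple subquotient, is naturally parametrised by matrices $A=(a_{rm})\in\scm_{\bm i,\bm a}$, where $a_{rm}$ records how $S_{i_r}$ contributes to the $m$-th layer. The constraints $\row(A)=\col(A)=\bm a$ and $a_{rm}=0$ unless $i_r=i_m$ drop out of the layer dimensions and the vanishing of $\Ext^1(S_i,S_j)$ whenever $i\neq j$.

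The crux is the third step: a direct computation combining the Euler-form twist $v^{\lan\bdim M(B),\bdim M(C)\ran}$ that appears in $\wit{u}_B\wit{u}_C$ with the intrinsic $v$-degree of the untwisted Hall number shows that the contribution of matrix $A$ to $\gamma_M(v)$ has leading $v$-degree equal to $\mathbbm{q}(A)$ (up to a global shift independent of $A$) with unit leading coefficient. The three summands of $\mathbbm{q}$ match, respectively, the extensions among simples of the same type within one layer ($\lan i_m,i_m\ran a_{pm}a_{rm}$), the symmetric Euler pairing between distinct layers ($(i_m\cdot i_m)a_{pm}a_{rl}$), and the residual twist from reordering along one row across layers ($\lan i_r,i_r\ran a_{rm}a_{rl}$). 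The diagonal matrix $D_{\bm a}$ corresponds to the generic-extension filtration and satisfies $\mathbbm{q}(D_{\bm a})=0$, giving the leading term $\wit{u}_{M_0}$ with coefficient $1$.

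Once this is established, the conclusion follows from the standard Kashiwara-Lusztig characterization. The monomial $E_{\bm i}^{(\bm a)}$ is bar-invariant automatically since each divided power is, and any bar-invariant element of $U_\cz^+$ is a canonical basis element iff, in its expansion in $\{\wit{u}_M\}$, the leading coefficient is $1$ and every other coefficient lies in $v^{-1}\bbz[v^{-1}]$. Translating through the matrix parametrisation, this is equivalent to demanding $\mathbbm{q}(A)<0$ for all $A\in\scm_{\bm i,\bm a}\setminus\{D_{\bm a}\}$. The main obstacle is the identification in step three: pinning down that the $v$-exponent is \emph{exactly} $\mathbbm{q}(A)$, with no surviving polynomial correction, requires careful bookkeeping of the twisted multiplication and is essentially the content of Reineke's key lemma adapted here to the quiver-with-automorphism setting.
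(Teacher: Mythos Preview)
The paper does not prove this statement at all: it is quoted verbatim as \cite[Thm~2.5]{DengDu2010tight} and used as a black box in the appendix to verify Lemma~\ref{iff condition of tightness for 3 terms} and its corollary. There is therefore no ``paper's own proof'' to compare against.

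That said, your sketch is broadly in the spirit of the argument in the cited reference (which in turn adapts Reineke's approach in \cite{Reineke2001monomials} to the quiver-with-automorphism setting). A few remarks are in order. First, you correctly flag step three as the crux but do not actually carry it out; the identification of the $v$-exponent with $\mathbbm{q}(A)$ is the entire technical content of the theorem, and your description of how the three summands of $\mathbbm{q}$ arise is heuristic rather than a computation. Second, your final criterion (``bar-invariant with leading coefficient $1$ and all other coefficients in $v^{-1}\bbz[v^{-1}]$'') is stated relative to the defining basis $\{\wit{u}_M\}$ of the full Hall algebra, but tightness is a property inside $U_\cz^+\cong\fkC(Q,\sz)$; one needs to know that this characterization descends to the composition subalgebra, which is not automatic and is part of what the cited references establish. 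Third, the theorem as stated applies to an arbitrary Cartan datum, whereas your argument presupposes a Hall-algebra realization via a hereditary algebra $A^F$; this is fine for symmetrizable data (every such datum arises from some $(Q,\sz)$), but you should say so explicitly.
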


From now on, suppose $(Q,\rm id)$ is the Kronecker quiver
$$Q:~\xymatrix{&1\ar@/^/[r]\ar@/_/[r]&2\\}$$
with identity automorphism, the associated Cartan datum is $(I,\cdot),~I=\{1,2\}$. Note that there exists an isomorphism between the twisted
composition subalgebras $\fkC_q(Q)\iso \fkC_\vartri(2)$, both of which give a realization of the positive part of
quantum algebra $\U_v(\wih{\fks\fkl}_2)$, for details, see \cite{Ringel1993composition,Ringel1993revisited}.

In the above setting, $\lan i,i\ran=1,i\cdot i=2$ for $i\in I$ and $\lan 1,2\ran=-2,\lan 2,1\ran=0,(1,2)=(2,1)=-2$.

\begin{proof}[Proof of Lemma \ref{iff condition of tightness for 3 terms}]
For $\bm i=\{(1,2,1),(2,1,2)\}$ and $\bm a=\{(a,b,c)\in\bbn_{>0}^3\}$, we have

\begin{equation*}
  \scm_{\bm i,\bm a}\setminus\{D_{\bm a}\}=\big\{A_x\mid 0<x\leqs \min\{a,c\}\big\},~A_x=\begin{bmatrix}
    a-x & 0 & x\\
    0 & b & 0\\
    x & 0 & c-x
  \end{bmatrix},
\end{equation*}
and
\begin{equation*}
\begin{split}
  \mathbbm{q}(A_x)&=\lan i_1,i_1\ran a_{11}a_{31}+\lan i_3,i_3\ran a_{31}a_{33}+(i_1\cdot i_2)a_{22}a_{31}+(i_2\cdot i_3)a_{13}a_{22}\\
  &\quad+(i_1\cdot i_3)a_{13}a_{31}+\lan i_1,i_1\ran a_{11}a_{13}+\lan i_3,i_3\ran a_{13}a_{33}\\
  &=(a-x)x+x(c-x)-2bx-2bx+2x^2+(a-x)x+(c-x)x\\
  &=-2x^2+2x(a+c-2b).
\end{split}
\end{equation*}

Easy to see, for $0<x\leqs \min\{a,c\}$, $\mathbbm{q}(A)<0\llra 2b\geqs a+c$. The result follows.
\end{proof}

\begin{cor}
For $\bm i=\{(1,2,1,2),(2,1,2,1)\}$ and $\bm a=\{(a,b,c,d)\in\bbn_{>0}^4\}$, $E_{\bm i}^{(\bm a)}$ is
tight monomial if and only if $2b\geqs a+c,~2c\geqs b+d$, but at least one is strict.
\end{cor}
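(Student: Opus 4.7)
The strategy is to apply the tightness criterion \cite[Thm 2.5]{DengDu2010tight} recalled above: $E_{\bm i}^{(\bm a)}$ is tight precisely when $\mathbbm{q}(A)<0$ for every $A\in\scm_{\bm i,\bm a}\setminus\{D_{\bm a}\}$. By the symmetry swapping the two simple roots of $\wih{\fks\fkl}_2$, it suffices to treat $\bm i=(1,2,1,2)$.

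I would first parametrize $\scm_{\bm i,\bm a}$. The condition $a_{rm}=0$ unless $i_r=i_m$ splits every such $4\times 4$ matrix into two $2\times 2$ blocks sitting on the odd-indexed and even-indexed rows/columns respectively, and the constraints $\row(A)=\col(A)=(a,b,c,d)$ reduce each block to a single free parameter. Writing $x=a_{13}$ and $y=a_{24}$, every element of $\scm_{\bm i,\bm a}$ takes the form
\begin{equation*}
  A(x,y)=\begin{pmatrix} a-x & 0 & x & 0 \\ 0 & b-y & 0 & y \\ x & 0 & c-x & 0 \\ 0 & y & 0 & d-y \end{pmatrix},\quad 0\leqs x\leqs \min(a,c),\ 0\leqs y\leqs \min(b,d),
\end{equation*}
with $D_{\bm a}=A(0,0)$.

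Next I would evaluate $\mathbbm{q}(A(x,y))$ by plugging $\lan i,i\ran=1$ and $i\cdot j=-2$ for $i\neq j$ into the three sums defining $\mathbbm{q}$ and tracking only the quadruples $(p,r,l,m)$ for which both $a_{pm}$ and $a_{rl}$ are nonzero. After reassembling the many cross terms (I expect pairs of $\pm 2xy$, $\pm 2x^2$, $\pm 2y^2$ to cancel in the expected way), I expect the closed form
\begin{equation*}
  \mathbbm{q}(A(x,y))=-2(x-y)^2+2x(a+c-2b)+2y(b+d-2c).
\end{equation*}
The two linear pieces are precisely the 3-term expression of Lemma \ref{iff condition of tightness for 3 terms} applied to the left and right sub-monomials $E_1^{(a)}E_2^{(b)}E_1^{(c)}$ and $E_2^{(b)}E_1^{(c)}E_2^{(d)}$, while the coupling $-2(x-y)^2$ records the cross interaction between the two blocks through the off-diagonal entries of the Cartan matrix.

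From this identity the conclusion is elementary. For necessity, $(x,y)=(1,0)$ forces $a+c-2b\leqs 0$ and $(x,y)=(0,1)$ forces $b+d-2c\leqs 0$; if both were equalities, then $\mathbbm{q}(A(1,1))=0$ (and $A(1,1)\neq D_{\bm a}$ since $a,b,c,d>0$), contradicting the strict criterion, so at least one of the two inequalities must be strict. For sufficiency, under the hypotheses each of the three summands is $\leqs 0$; if $x\neq y$ the first term alone is strictly negative, while if $x=y\geqs 1$ the two linear terms combine into $2x(a+d-b-c)$, which is strictly negative because adding the two hypotheses yields $b+c\geqs a+d$ with strict inequality as soon as one of them is strict. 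The main obstacle will be the bookkeeping in the derivation of the closed form for $\mathbbm{q}(A(x,y))$: expanding the three sums produces about a dozen terms that must be assembled into the compact expression above, and any sign error there would spoil the clean factorization; once that identity is secured, the sign analysis above is routine.
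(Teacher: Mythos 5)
Your proposal is correct and follows essentially the same route as the paper: the same parametrization of $\scm_{\bm i,\bm a}$ by the two block parameters $x,y$, the same closed form $\mathbbm{q}(A(x,y))=-2(x-y)^2+2x(a+c-2b)+2y(b+d-2c)$, and the same sign analysis via the special choices $(1,0)$, $(0,1)$, $(x,x)$. Your necessity argument via $\mathbbm{q}(A(1,1))=0$ when both inequalities are equalities is a slightly cleaner phrasing of the paper's use of $x=y\neq 0$, but it is the same idea.
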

\begin{proof}For $\bm i=\{(1,2,1,2),(2,1,2,1)\}$ and $\bm a=\{(a,b,c,d)\in\bbn_{>0}^4\}$, we have

\begin{equation*}
  \scm_{\bm i,\bm a}\setminus\{D_{\bm a}\}=\big\{A_{x,y}\mid 0\leqs x\leqs \min\{a,c\},0\leqs y\leqs \min\{b,d\},x^2+y^2\neq0\big\},
\end{equation*}

\begin{equation*}
  ~A_{x,y}=\begin{bmatrix}
    a-x & 0 & x & 0\\
    0 & b-y & 0 & y\\
    x & 0 & c-x & 0\\
    0 & y & 0 & d-y
  \end{bmatrix},
\end{equation*}
and
\begin{equation*}
\begin{split}
  \mathbbm{q}(A_{x,y})&=2\big(\lan i_1,i_1\ran x(a-x)+\lan i_2,i_2\ran y(b-y)+\lan i_3,i_3\ran x(c-x)+\lan i_4,i_4\ran y(d-y)\big)\\
  &\quad+\big((i_1\cdot i_2)+(i_2\cdot i_3)\big)x(b-y)+(i_1\cdot i_3)x^2+\big((i_1\cdot i_4)+(i_2\cdot i_3)\big)xy\\
  &\quad+(i_2\cdot i_4)y^2+\big((i_2\cdot i_3)+(i_3\cdot i_4)\big)y(c-x)\\
  &=-2(x-y)^2+2x(a+c-2b)+2y(b+d-2c).
\end{split}
\end{equation*}

If $2b\geqs a+c,~2c\geqs b+d$, and at least one is strict, then it is obvious that $\mathbbm{q}(A_{x,y})<0$ for
all $A_{x,y}\in \scm_{\bm i,\bm a}\setminus\{D_{\bm a}\}$.

If for all $A_{x,y}\in \scm_{\bm i,\bm a}\setminus\{D_{\bm a}\}$, $\mathbbm{q}(A_{x,y})<0$,
then take special $x=0,y\neq 0(x\neq 0,y=0,\text{and}~x=y\neq0)$, we obtain that
$2b\geqs a+c(2c\geqs b+d,\text{and}~b+c>a+d)$, which means $2b\geqs a+c,~2c\geqs b+d$, but at least one is strict. The result follows.
\end{proof}

\begin{rmk}
  The sufficient part was first given by Lusztig in \cite[Sec.12]{Lusztig1993tight}.
\end{rmk}



\begin{thebibliography}{10}

\bibitem{BeckChariPressley1999algebraic}
J.~Beck, V.~Chari and A.~Pressley, \emph{An algebraic characterization of the
  affine canonical basis}, Duke Math. J. \textbf{99} (1999), 455--487.

\bibitem{BeckNakajima2004crystal}
J.~Beck and H.~Nakajima, \emph{Crystal bases and two sided cells of quantum
  affine algebras}, Duke Math. J. \textbf{123} (2004), 335--402.

\bibitem{BeilinsonLusztigMacPherson1990geometric}
A.~A. Beilinson, G.~Lusztig and R.~MacPherson, \emph{A geometric setting for
  the quantum deformation of ${GL}_n$}, Duke Math. J. \textbf{61} (1990),
  no.~2, 655--677.

\bibitem{Bongartz1996degenerations}
K.~Bongartz, \emph{On degenerations and extensions of finite dimensional
  modules}, Adv. Math. \textbf{121} (1996), 245--287.

\bibitem{DengDu2005monomial}
B.~Deng and J.~Du, \emph{Monomial bases for quantum affine ${\fks\fkl}_n$},
  Adv. Math. \textbf{191} (2005), 276--304.

\bibitem{DengDu2010tight}
B.~Deng and J.~Du, \emph{Tight monomials and the monomial basis property}, J.
  Algebra. \textbf{324} (2010), no.~12, 3355--3377.

\bibitem{DengDuFu2012double}
B.~Deng, J.~Du and Q.~Fu, \emph{{A double Hall algebra approach to affine
  quantum Schur-Weyl theory}}, no. 401, Cambridge University Press, 2012.

\bibitem{DengDuParashallWang2008finite}
B.~Deng, J.~Du, B.~Parashall and J.~Wang, \emph{Finite dimensional algebras and
  quantum groups}, Mathematical Surveys and Monographs, vol. 150, American
  Mathematical Society, Providence, RI, 2008.

\bibitem{DengDuXiao2007generic}
B.~Deng, J.~Du and J.~Xiao, \emph{Generic extensions and canonical bases for
  cyclic quivers}, Canad. J. Math. \textbf{59} (2007), no.~6, 1260--1283.

\bibitem{Drinfeld1988new}
V.~Drinfeld, \emph{A new realization of {Y}angians and quantized affine
  algebras}, Sov. Math. Dokl. \textbf{36} (1988), no.~2, 212--216.

\bibitem{Du1992kahzdan}
J.~Du, \emph{Kahzdan-{L}usztig base and isomorphism theorems for $q$-{S}chur
  algebras}, Contemp. Math. \textbf{139} (1992), 121--140.

\bibitem{Du1994ic}
J.~Du, \emph{{IC} bases and quantum linear groups}, Proc. Sympos. Pure Math.
  \textbf{56} (1994), 135--148.

\bibitem{DuFu2014integral}
J.~Du and Q.~Fu, \emph{The integral quantum loop algebra of $\mathfrak{gl}_n$},
  arXiv:1404.5679v1.

\bibitem{DuFu2010modified}
J.~Du and Q.~Fu, \emph{A modified {BLM} approach to quantum affine
  $\fkg\fkl_n$}, Math. Z. \textbf{266} (2010), no.~4, 747--781.

\bibitem{DuFu2015quantum}
J.~Du and Q.~Fu, \emph{Quantum affine ${\fkg\fkl}_n$ via {H}ecke algebras},
  Adv. Math. \textbf{282} (2015), 23--46.

\bibitem{DuParashall2002linear}
J.~Du and B.~Parshall, \emph{Linear quivers and the geometric setting of
  quantum {$GL_n$}}, Indag. Math.(N.S.) \textbf{13} (2002), no.~4, 459--481.

\bibitem{DuZhaomultiplication}
J.~Du and Z.~Zhao, \emph{{Multiplication formulas and canonical basis for
  quantum affine $\fkg\fkl_n$}}, arXiv:1608.01423.

\bibitem{GinzburgVasserot1993langlands}
V.~Ginzburg and E.~Vasserot, \emph{Langlands reciprocity for affine quantum
  groups of type {A}n}, Int. Math. Res. Not. (1993), no.~3, 67--85.

\bibitem{Green1999affine}
R.~M. Green, \emph{The affine q-{S}chur algebra}, J. Algebra. \textbf{215}
  (1999), no.~2, 379--411.

\bibitem{GrojnowskiLusztig1993comparison}
I.~Grojnowski and G.~Lusztig, \emph{A comparison of bases of quantized
  enveloping algebras}, Contemp. Math. \textbf{153} (1993), 11--19.

\bibitem{Guo1995hallpoly}
J.~Y. Guo, \emph{The {H}all polynomials of a cyclic serial algebra}, Comm.
  Algebra. \textbf{23} (1995), 743--751.

\bibitem{Jantzen1995lectures}
J.~C. Jantzen, \emph{Lectures on quantum groups}, Graduate Studies in
  Mathematics, vol.~6, American Mathematical Society, Providence, RI, 1995.

\bibitem{Kashiwara1991crystal}
M.~Kashiwara, \emph{On crystal bases of the q-analogue of universal enveloping
  algebras}, Duke Math. J. \textbf{63} (1991), 465--516.

\bibitem{LL}C. Lai and L. Luo, {\it An elementary construction of monomial bases of quantum affine $\mathfrak{gl}_n$}, arXiv:1506.07263v1.

\bibitem{KazhdanLusztig1979repns}
D.~Kazhdan and G.~Lusztig, \emph{Representations of {C}oxter groups and {H}ecke
  algebras}, Invent. Math. \textbf{53} (1979), 165--184.

\bibitem{Lusztig1990canonical}
G.~Lusztig, \emph{Canonical bases arising from quantized enveloping algebras},
  J. Amer. Math. Soc. \textbf{3} (1990), no.~2, 447--498.

\bibitem{Lusztig1991quivers}
G.~Lusztig, \emph{Quivers, perverse sheaves, and quantized enveloping
  algebras}, J. Amer. Math. Soc. \textbf{4} (1991), no.~2, 365--421.

\bibitem{Lusztig1992affine}
G.~Lusztig, \emph{Affine quivers and canonical bases}, Inst. Hautes \'etudes
  Sci. Publ. Math. (1992), no.~76, 111--163.

\bibitem{Lusztig1993introduction}
G.~Lusztig, \emph{Introduction to quantum groups}, Progress in Math., vol. 110,
  Birkh\"auser, Boston, 1993.

\bibitem{Lusztig1993tight}
G.~Lusztig, \emph{Tight monomials in quantized enveloping algebras}, in
  ``Quantum deformations of algebras and their representations", Israel Math.
  Conf. Proc., vol.~7, 1993, pp.~117--132.

\bibitem{Lusztig1999aperiodicity}
G.~Lusztig, \emph{Aperiodicity in quantum affine $\fkg\fkl_n$}, Asian J. Math
  \textbf{3} (1999), 147--177.

\bibitem{Reineke2001generic}
M.~Reineke, \emph{Generic extensions and multiplicative bases of quantum groups
  at $q=0$}, Represent. Theory. \textbf{5} (2001), 147--163.

\bibitem{Reineke2001monomials}
M.~Reineke, \emph{Monomials in canonical bases of quantum groups and quadratic
  forms}, J. Pure Appl. Algebra. \textbf{157} (2001), no.~2-3, 301--309.

\bibitem{Ringel1990quantum}
C.~M. Ringel, \emph{Hall algebras and quantum groups}, Invent. Math.
  \textbf{101} (1990), no.~3, 583--591.

\bibitem{Ringel1993composition}
C.~M. Ringel, \emph{The composition algebra of a cyclic quiver}, Proc. London
  Math. Soc. \textbf{66} (1993), 507--537.

\bibitem{Ringel1993revisited}
C.~M. Ringel, \emph{Hall algebras revisited}, Quantum deformations of algebras
  and their representations (Ramat-Gan, 1991/1992; Rehovot, 1991/1992), Israel
  Math. Conf. Proc., vol.~7, Bar-Ilan Univ., Ramat Gan, 1993, pp.~171--176.

\bibitem{VaragnoloVasserot1999decomposition}
M.~Varagnolo and E.~Vasserot, \emph{On the decomposition matrices of the
  quantized {S}chur algebra}, Duke Math. J. \textbf{100} (1999), 267--297.

\bibitem{XicanonicalA31999}
N.~Xi, \emph{Canonical basis for type {$A_3$}}, Comm. Algebra. \textbf{27}
  (1999), no.~11, 5703--5710.

\bibitem{XicanonicalB21999}
N.~Xi, \emph{Canonical basis for type {$B_2$}}, J. Algebra. \textbf{214}
  (1999), no.~1, 8--21.

\bibitem{Xiao1997drinfeld}
J.~Xiao, \emph{Drinfeld double and {R}ingel--{G}reen theory of {H}all
  algebras}, J. Algebra. \textbf{190} (1997), no.~1, 100--144.

\bibitem{Zwara1997degenerations}
G.~Zwara, \emph{Degenerations for modules over representation-finite biserial
  algebras}, J. Algebra. \textbf{198} (1997), no.~2, 563--581.

\end{thebibliography}
\end{document}